\tikzset{%
Square/.style = {%
inner sep = 0 pt, minimum width = 6 mm, minimum height = 6 mm, draw = black, fill = none, align = center
}}
\theoremstyle{definition}
\newtheorem{definition}{Definition}[section]
\newtheorem{algorithm} [definition]{Algorithm}
\newtheorem{example} [definition]{Example}
\newtheorem{notation} [definition]{Notation}
\newtheorem{remark} [definition]{Remark}
\theoremstyle{plain}
\newtheorem{lemma} [definition]{Lemma}
\newtheorem{proposition}[definition]{Proposition}
\newtheorem{theorem} [definition]{Theorem}
\renewcommand{\arg}{x}
\newcommand{\wedgehor}{{\,\scalebox{.67}{$\vartriangle$}\,}}
\newcommand{\wedgever}{{\,\scalebox{.67}{$\blacktriangle$}\,}}
\newcommand{\hor}{\Box}
\newcommand{\ver}{\blacksquare}
\newcommand{\HA}{\mathrm{A}_{\wedgehor}}
\newcommand{\VA}{\mathrm{A}_{\wedgever}}
\newcommand{\boxmid}{\,\adjustbox{valign=m}{\rotatebox{90}{$\boxminus$\;}}\,}
\def\l@section {\@tocline{1}{0pt}{1pc}{} {}}
\def\l@subsection {\@tocline{2}{0pt}{1pc}{4.6em}{}}
\def\l@subsubsection{\@tocline{3}{0pt}{1pc}{7.6em}{}}
\renewcommand{\tocsection}[3]{%
\indentlabel{\@ifnotempty{#2}{\makebox[1.25em][l]{\ignorespaces#1#2.}}}#3}
\renewcommand{\tocsubsection}[3]{%
\indentlabel{\@ifnotempty{#2}{\hspace*{1.25em}\makebox[2.00em][l]{\ignorespaces#1#2.}}}#3}
\renewcommand{\tocsubsubsection}[3]{%
\indentlabel{\@ifnotempty{#2}{\hspace*{3.25em}\makebox[2.75em][l]{\ignorespaces#1#2.}}}#3}
\begin{document}


\title[COMMUTATIVITY IN DOUBLE INTERCHANGE SEMIGROUPS]
{COMMUTATIVITY IN DOUBLE INTERCHANGE SEMIGROUPS}


\author{Fatemeh Bagherzadeh}

\address{Department of Mathematics and Statistics, University of Saskatchewan, Canada}

\email{bagherzadeh@math.usask.ca}


\author{Murray Bremner}

\address{Department of Mathematics and Statistics, University of Saskatchewan, Canada}

\email{bremner@math.usask.ca}


\subjclass[2010]{%
Primary
20M50. 
Secondary
17A50, 
18D05, 
18D50, 
20M05, 
52C20. 
}


\keywords{Double semigroups,
interchange law,
associativity,
commutativity,
rectangular partitions,
double categories,
pasting diagrams.}


\thanks{%
The authors were supported by a Discovery Grant from NSERC,
the Natural Sciences and Engineering Research Council of Canada.
}


\begin{abstract}
We extend the work of Kock (2007) and Bremner \& Madariaga (2016) on commutativity in
double interchange semigroups (DIS) to relations with 10 arguments.
Our methods involve the free symmetric operad generated by two binary operations with no symmetry,
its quotient by the two associative laws,
its quotient by the interchange law,
and its quotient by all three laws.
We also consider the geometric realization of free double interchange magmas
by rectangular partitions of the unit square $I^2$.
We define morphisms between these operads which allow us to represent elements of free DIS
both algebraically as tree monomials and geometrically as rectangular partitions.
With these morphisms we reason diagrammatically about free DIS and prove our new commutativity relations.
\end{abstract}

\maketitle


{\footnotesize\tableofcontents}


\section{Introduction}

The primary motivation for this paper is the existence of unexpected commutativity properties
discovered during the last 10 years for double interchange semigroups.
Kock \cite[Proposition 2.3]{Kock2007} presents a $4 \times 4$ configuration
for which associativity and the interchange law imply the equality of two monomials,
with the same placement of parentheses and operation symbols,
but with different permutations of the arguments.
We display his result both algebraically and geometrically:
\begin{equation}
\label{Kockdiagram}
\begin{array}{c}
\begin{array}{l}
( a \,\hor\, b \,\hor\, c \,\hor\, d )
\,\ver\,
( e \,\hor\, f \,\hor\, g \,\hor\, h )
\,\ver\,
( i \,\hor\, j \,\hor\, k \,\hor\, \ell )
\,\ver\,
( m \,\hor\, n \,\hor\, p \,\hor\, q )
\equiv
\\
( a \,\hor\, b \,\hor\, c \,\hor\, d )
\,\ver\,
( e \,\hor\, g \,\hor\, f \,\hor\, h )
\,\ver\,
( i \,\hor\, j \,\hor\, k \,\hor\, \ell )
\,\ver\,
( m \,\hor\, n \,\hor\, p \,\hor\, q )
\end{array}
\\[5mm]
\begin{array}{c}
\begin{tikzpicture}[ draw = black, x = 6 mm, y = 6 mm ]
\node [Square] at ($(0, 0)$) {$a$};
\node [Square] at ($(1, 0)$) {$b$};
\node [Square] at ($(2, 0)$) {$c$};
\node [Square] at ($(3, 0)$) {$d$};
\node [Square] at ($(0,-1)$) {$e$};
\node [Square] at ($(1,-1)$) {$f$};
\node [Square] at ($(2,-1)$) {$g$};
\node [Square] at ($(3,-1)$) {$h$};
\node [Square] at ($(0,-2)$) {$i$};
\node [Square] at ($(1,-2)$) {$j$};
\node [Square] at ($(2,-2)$) {$k$};
\node [Square] at ($(3,-2)$) {$\ell$};
\node [Square] at ($(0,-3)$) {$m$};
\node [Square] at ($(1,-3)$) {$n$};
\node [Square] at ($(2,-3)$) {$p$};
\node [Square] at ($(3,-3)$) {$q$};
\end{tikzpicture}
\end{array}
\equiv
\begin{array}{c}
\begin{tikzpicture}[ draw = black, x = 6 mm, y = 6 mm ]
\node [Square] at ($(0, 0)$) {$a$};
\node [Square] at ($(1, 0)$) {$b$};
\node [Square] at ($(2, 0)$) {$c$};
\node [Square] at ($(3, 0)$) {$d$};
\node [Square] at ($(0,-1)$) {$e$};
\node [Square] at ($(1,-1)$) {$g$};
\node [Square] at ($(2,-1)$) {$f$};
\node [Square] at ($(3,-1)$) {$h$};
\node [Square] at ($(0,-2)$) {$i$};
\node [Square] at ($(1,-2)$) {$j$};
\node [Square] at ($(2,-2)$) {$k$};
\node [Square] at ($(3,-2)$) {$\ell$};
\node [Square] at ($(0,-3)$) {$m$};
\node [Square] at ($(1,-3)$) {$n$};
\node [Square] at ($(2,-3)$) {$p$};
\node [Square] at ($(3,-3)$) {$q$};
\end{tikzpicture}
\end{array}
\end{array}
\end{equation}
Note the transposition of $f$ and $g$.
We use the symbol $\equiv$ as an abbreviation for the statement that
the equation holds for all values of the arguments.
This interplay between algebra and geometry underlies all the results in this paper.
DeWolf \cite[Proposition 3.2.4]{DeWolf2013} used a similar argument with 10 variables
to prove that the operations coincide in every cancellative double interchange semigroup.
Bremner \& Madariaga used computer algebra to show that nine variables is the smallest number
for which such a commutativity property holds.
We display one of their results \cite[Theorem 4.1]{BM2016}; note the transposition of $e$ and $g$:
\begin{equation}
\label{BMdiagram}
\begin{array}{c}
\begin{array}{l}
( ( a \,\hor\, b ) \,\hor\, c )
\,\ver\,
( ( ( d \,\hor\, ( e \,\ver\, f ) ) \,\hor\, ( g \,\ver\, h ) ) \,\hor\, i )
\equiv
\\
( ( a \,\hor\, b ) \,\hor\, c )
\,\ver\,
( ( ( d \,\hor\, ( g \,\ver\, f ) ) \,\hor\, ( e \,\ver\, h ) ) \,\hor\, i )
\end{array}
\\[5mm]
\begin{array}{c}
\begin{tikzpicture}[ draw = black, x = 0.625 mm, y = 0.625 mm ]
\draw
( 0, 0) -- (48, 0)
( 0,24) -- (24,24)
( 0,48) -- (48,48)
( 6,36) -- (24,36)
( 0, 0) -- ( 0,48)
(12, 0) -- (12,48)
(24, 0) -- (24,48)
(48, 0) -- (48,48)
( 6,24) -- ( 6,48)
(24,24) -- (48,24)
( 6,12) node {$a$}
(18,12) node {$b$}
(36,12) node {$c$}
( 3,36) node {$d$}
( 9,30) node {$e$}
( 9,42) node {$f$}
(18,30) node {$g$}
(18,42) node {$h$}
(36,36) node {$i$};
\end{tikzpicture}
\end{array}
\equiv
\begin{array}{c}
\begin{tikzpicture}[ draw = black, x = 0.625 mm, y = 0.625 mm ]
\draw
( 0, 0) -- (48, 0)
( 0,24) -- (24,24)
( 0,48) -- (48,48)
( 6,36) -- (24,36)
( 0, 0) -- ( 0,48)
(12, 0) -- (12,48)
(24, 0) -- (24,48)
(48, 0) -- (48,48)
( 6,24) -- ( 6,48)
(24,24) -- (48,24)
( 6,12) node {$a$}
(18,12) node {$b$}
(36,12) node {$c$}
( 3,36) node {$d$}
(18,30) node {$e$}
( 9,42) node {$f$}
( 9,30) node {$g$}
(18,42) node {$h$}
(36,36) node {$i$};
\end{tikzpicture}
\end{array}
\end{array}
\end{equation}
Even though the operations are associative, we fully parenthesize monomials
so that the algebraic equation corresponds exactly with its geometric realization.

In this paper, we begin the classification of commutativity properties with 10 variables
which are not consequences of the known results with nine variables.

Following Loday \& Vallette \cite{LV2012}, we say \emph{algebraic operad}
to mean an operad in the symmetric monoidal category of vector spaces over a field $\mathbb{F}$.
When we say simply \emph{operad}, we mean a symmetric algebraic operad generated by
two binary operations.
For an earlier reference on operads and their applications,
see Markl, Shnider \& Stasheff \cite{MSS2002}.
For a more recent reference which emphasizes the algorithmic aspects,
see Bremner \& Dotsenko \cite{BD2016}.

The most common contemporary application of rectangular partitions is to
VLSI (very large scale integration): the process of producing integrated circuits
by the combination of thousands of transistors into a single silicon chip
\cite{KLMH2011}.
In microelectronics, block partitions are called \emph{floorplans}:
schematic representations of the placement of the major functional components of an integrated circuit.
Finding optimal floorplans subject to physical constraints leads to
NP-hard problems of combinatorial optimization.
An important subset consists of \emph{sliceable} floorplans;
these are similar to our dyadic partitions, except that we require the bisections to be exact.
Many NP-hard problems have polynomial time solutions in the sliceable case.
However, sliceable floorplans are defined to exclude the possibility that four subrectangles intersect 
in a point, thus rendering the interchange law irrelevant.


\section{Overview of results}

We recall basic definitions and results from the theory of algebraic operads.
We consider seven algebraic operads, each generated by two binary operations.


\subsection{Nonassociative operads}

The first four have nonassociative operations.

\begin{definition}
\label{freeoperad}
$\mathbf{Free}$
is the free operad generated by operations $\wedgehor$ and $\wedgever$:
we identify the basis monomials in arity $n$ with the set $\mathbb{B}_n$ of
all labelled rooted complete binary plane trees with $n$ leaves (\emph{tree monomials} for short),
where \emph{labelled} means that we assign an operation to each internal node (including the root),
and we choose a bijection between the leaf nodes and the argument symbols $\arg_1, \dots, \arg_n$.
For $n = 1$ we have only one tree, with no root and one leaf labelled $x_1$.
The partial compositions $\circ_i$ in this operad are defined as usual.
(We avoid using $\circ$ as an operation symbol since it conflicts with partial composition.)
\end{definition}

\begin{algorithm}
We recall in the general case the recursive algorithm for
\emph{converting a tree $T$ to a word $\mu(T)$}.
Let $\mathbf{O}$ be the free nonsymmetric operad generated by operations
$\Omega = \{ \, \omega_i \mid i \in I \, \}$ indexed by the set $I$,
with arities assigned by the function $a\colon I \to \mathbb{N}$.
The tree $\vert$ with one (leaf) node may be identified with the word $x_1$.
Each operation $\omega_i$ can be identified with either
(i) the word $\mu(T_i) = \omega_i(x_1,\dots,x_{a(i)})$, or
(ii) the planar rooted tree $T_i$ with root labelled $\omega_i$ and $a(i)$ leaves labelled
$x_1, \dots, x_{a(i)}$ from left to right.
This defines $\mu(T)$ for trees with exactly one internal node (the root).
Now let $T$ be a tree with at least two internal nodes (counting the root),
with root labelled $\omega_i$ and with $a(i)$ children (which are leaves or roots of subtrees)
denoted $T_1, \dots, T_{a(i)}$.
By induction we may assume that $\mu(T_1), \dots, \mu(T_{a(j)})$ have been defined,
and so we set $\mu( T ) = \omega_i( \mu(T_1), \dots, \mu( T_{a(i)} )$,
with the subscripts of the variables changed to produce the identity permutation.
By induction, this defines $\mu(T)$ for every basis tree $T$ in the free operad $\mathbf{O}$.
\end{algorithm}

\begin{definition}
\label{interoperad}
$\mathbf{Inter}$
is the quotient of $\mathbf{Free}$ by the operad ideal $\mathrm{I} = \langle \boxplus \rangle$
generated by the interchange law (also called exchange law, medial law, entropic law):
\begin{equation}
\label{intlaw}
\boxplus\colon\,
( a \,\wedgehor\, b ) \,\wedgever\, ( c \,\wedgehor\, d )
-
( a \,\wedgever\, c ) \,\wedgehor\, ( b \,\wedgever\, d )
\equiv 0.
\end{equation}
Example \ref{exampleinterchange} gives the geometrical explanation of our symbol for this relation.
\end{definition}

\begin{definition}
\label{bpoperad}
$\mathbf{BP}$ is the operad of \emph{block} (or \emph{rectangular}) partitions of the (open) unit
square $I^2$, $I = (0,1)$.
To be precise, a block partition $P$ of $I^2$ is determined by a finite set $C$ of open line segments
contained in $I^2$ such that $P = I^2 \setminus \bigcup C$
is the disjoint union of open subrectangles $(x_1,x_2) \times (y_1,y_2)$, called \emph{empty blocks}.
The segments in $C$, which are called \emph{cuts}, must be either horizontal or vertical,
that is $H = (x_1,x_2) \times \{y_0\}$ or $V = \{x_0\} \times (y_1,y_2)$,
where $0 \le x_0, x_1, x_2, y_0, y_1, y_2 \le 1$ with $x_1 < x_2$, $y_1 < y_2$.
We assume that the cuts are \emph{maximal} in the sense that if two elements $H, V \in C$
intersect then one is horizontal, the other is vertical, and $H \cap V$ is a single point.
$\mathbf{BP}$ has two binary operations:
the \emph{horizontal} (resp.~\emph{vertical}) operation $x \rightarrow y$ (resp.~$x \uparrow y$)
translates $y$ one unit to the east (resp.~north), forms the union of $x$ and translated $y$
to produce a block partition of a rectangle of width (resp.~height) two,
scales this rectangle horizontally (resp.~vertically) by one-half,
and produces another block partition.
The operadic analogues are as follows.
If $x$ is a block partition with $m$ parts ordered $x_1, \dots, x_m$ in some way,
then $x$ is an $m$-ary operation:
for any other block partition $y$ with $n$ parts, the partial composition
$x \circ_i y$ ($1 \le i \le m$) is the result of scaling $y$ to have the same size as $x_i$, and
replacing $x_i$ by the scaled partition $y$, producing a new block partition with $m{+}n{-}1$ parts.
Let $\boxmid$ and $\boxminus$ denote the two block partitions with two equal parts:
the first has a vertical (resp.~horizontal) cut and represents the horizontal (resp.~vertical) operation;
the parts are labelled 1, 2 in the positive direction, namely east (resp.~north).
These two operations form a basis for the homogeneous space $\mathbf{BP}(2)$.
The original operations are then defined as follows:
\begin{equation}
\label{bpoperations}
\begin{array}{l}
x \rightarrow y = ( \, \boxmid \circ_1 x \, ) \circ_{m+1} y = ( \, \boxmid \circ_2 y \, ) \circ_1 x,
\\[0.5mm]
x \uparrow y = ( \, \boxminus \circ_1 x \, ) \circ_{m+1} y = ( \, \boxminus \circ_2 y \, ) \circ_1 x.
\end{array}
\end{equation}
$\mathbf{BP}$ is a set operad, but we make it into an algebraic operad in the usual way
(see \S\ref{vectorsetoperads}): we define operations on elements and extend to linear combinations.
\end{definition}

\begin{definition}
\label{dbpoperad}
$\mathbf{DBP}$ is the unital suboperad of $\mathbf{BP}$ generated by $\mathbf{BP}(2)$,
where unital means we include the unary operation represented by $I^2$, the block partition with one part.
Thus $\mathbf{DBP}$ consists of the \emph{dyadic} partitions, meaning that every $P$ in $\mathbf{DBP}$ with $n{+}1$ parts
comes from some $Q$ in $\mathbf{DBP}$ with $n$ parts by exact bisection of a part of $Q$ horizontally or vertically.
The free double interchange magma is the algebra over $\mathbf{DBP}$ generated by $I^2$.
\end{definition}

\begin{algorithm}
\label{defbinarypartition}
For the general dimension $d \ge 1$, a \emph{dyadic block partition} $P$ with $k$ parts of the open unit
$d$-cube $I^d$, $I = (0,1)$, is constructed by setting $P_1 \leftarrow \{ I^d \}$ and performing the
following steps for $i = 1, \dots, k{-}1$:
\begin{itemize}[leftmargin=*]
\item
Choose an element $B \in P_i$ and a coordinate axis $j \in \{1,\dots,d\}$.
\item
Set $c \leftarrow \tfrac12(a_j{+}b_j)$ where $(a_j,b_j)$ is
the projection of $B$ onto the coordinate $x_j$.
\item
Set $\{ B', B'' \} \leftarrow B \setminus \{ \, x \in B \mid x_j = c \, \}$:
the disjoint open blocks obtained from bisecting $B$ by the hyperplane $x_j = c$.
\item
Set $P_{i+1} \leftarrow ( \, P_i \setminus \{ B \} \, ) \sqcup \{ B', B'' \}$:
in $P_i$, replace block $B$ with blocks $B'$, $B''$.
\end{itemize}
Finally, set $P \leftarrow P_k$.
\end{algorithm}

\begin{definition}
\label{defsubrectangle}
Let $P$ be a block partition of $I^2$ of arity $n$ determined by a set $C$ of line segments.
Then $P = I^2 \setminus C$ is the disjoint union of $n$ empty blocks $B_1, \dots, B_n$
and we indicate this by writing $P = \bigsqcup B_i$.
Suppose that the open rectangle $R = (x_1,x_2) \times ( y_1,y_2) \subseteq I^2$ admits a block
partition (in the obvious sense) into the disjoint union of a subset $B_{i_1}, \dots, B_{i_m}$ of
$m$ empty blocks from $P$.
In this case we say that $R$ is a \emph{subrectangle} of $P$ of arity $m$.
Every empty block $B_i$ is a subrectangle of $P$ of arity 1.
\end{definition}

\begin{definition}
\label{defgeomap}
The \emph{geometric realization map} $\Gamma\colon \mathbf{Free} \to \mathbf{BP}$
is the morphism of operads defined on tree monomials as follows:
$\Gamma( \,\vert\, ) = I^2$,
where $\vert$ is the (unique) tree with one node (a leaf),
and recursively we define
\begin{equation}
\begin{array}{c}
\Gamma( \, T_1 \wedgehor T_2 \, )
=
\adjustbox{valign=m}
{\begin{xy}
( 5, 5 )*+{\Gamma(T_1)};
( 15, 5 )*+{\Gamma(T_2)};
( 0, 0 ) = "1";
( 0, 10 ) = "2";
( 10, 0 ) = "3";
( 10, 10 ) = "4";
( 20, 0 ) = "5";
( 20, 10 ) = "6";
{ \ar@{-} "1"; "2" };
{ \ar@{-} "3"; "4" };
{ \ar@{-} "5"; "6" };
{ \ar@{-} "1"; "5" };
{ \ar@{-} "2"; "6" };
\end{xy}}
=
\Gamma(T_1) \rightarrow \Gamma(T_2),
\\[4mm]
\Gamma( \, T_1 \wedgever T_2 \, )
=
\adjustbox{valign=m}
{\begin{xy}
( 5, 5 )*+{\Gamma(T_1)};
( 5, 15 )*+{\Gamma(T_2)};
( 0, 0 ) = "1";
( 0, 10 ) = "2";
( 0, 20 ) = "3";
( 10, 0 ) = "4";
( 10, 10 ) = "5";
( 10, 20 ) = "6";
{ \ar@{-} "1"; "3" };
{ \ar@{-} "4"; "6" };
{ \ar@{-} "1"; "4" };
{ \ar@{-} "2"; "5" };
{ \ar@{-} "3"; "6" };
\end{xy}}
=
\Gamma(T_1) \uparrow \Gamma(T_2).
\end{array}
\end{equation}
\end{definition}

\begin{lemma}
\label{lemma1}
The image of $\Gamma$ is the operad $\Gamma( \mathbf{Free} ) = \mathbf{DBP}$ of dyadic block partitions.
The kernel of $\Gamma$ is the operad ideal $\mathrm{ker}(\Gamma) = \langle \boxplus \rangle$
generated by the interchange law.
Hence there is an operad isomorphism $\mathbf{Inter} \cong \mathbf{DBP}$.
\end{lemma}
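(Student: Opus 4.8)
I would prove the three assertions in turn, the third being the substantive one.

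The first two are read off from the definitions. The map $\Gamma$ is a morphism of operads carrying the generators $\wedgehor,\wedgever$ of $\mathbf{Free}$ to the two generators of $\mathbf{BP}(2)$ and the arity-one generator $\vert$ to $I^2$, so its image is the unital suboperad of $\mathbf{BP}$ generated by $\mathbf{BP}(2)$, which is $\mathbf{DBP}$ by Definition~\ref{dbpoperad}; an induction on the number of parts shows directly that a block partition lies in the image exactly when it arises from $I^2$ by iterated exact bisections. For $\langle\boxplus\rangle\subseteq\ker\Gamma$ it suffices, since $\ker\Gamma$ is an ideal, to check that $\Gamma$ annihilates the single relator: unwinding the definitions of $\to$ and $\uparrow$, both $(a\wedgehor b)\wedgever(c\wedgehor d)$ and $(a\wedgever c)\wedgehor(b\wedgever d)$ are sent to the partition of $I^2$ by the two central cuts $\{1/2\}\times(0,1)$ and $(0,1)\times\{1/2\}$ into four congruent blocks, with $a,b,c,d$ in the lower-left, lower-right, upper-left, and upper-right block respectively, so the two images agree.

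The heart of the matter is the reverse inclusion $\ker\Gamma\subseteq\langle\boxplus\rangle$, equivalently the injectivity of the induced operad morphism $\bar\Gamma\colon\mathbf{Inter}\to\mathbf{DBP}$. The plan is to construct a set map $\Delta\colon\mathbf{DBP}\to\mathbf{Inter}$, extend it linearly, and show it is a two-sided inverse of $\bar\Gamma$; since $\bar\Gamma$ is then a bijective operad morphism (and its set-inverse is automatically an operad morphism), this yields the isomorphism $\mathbf{Inter}\cong\mathbf{DBP}$ and, comparing with the projections from $\mathbf{Free}$, the equality $\ker\Gamma=\langle\boxplus\rangle$. The construction rests on a geometric fact, immediate from the recursive definition of $\Gamma$: every $P\in\mathbf{DBP}$ of arity $\ge 2$ has at least one of the central lines $\{1/2\}\times(0,1)$, $(0,1)\times\{1/2\}$ as a full cut (that is, disjoint from the interiors of all empty blocks of $P$), and slicing $P$ along a full central cut and rescaling the two halves recovers a decomposition $P=P_1\to P_2$ (if the vertical central line is a full cut) or $P=P_1\uparrow P_2$ (if the horizontal one is), with $P_1,P_2\in\mathbf{DBP}$ of smaller arity, uniquely determined by $P$. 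One then defines $\Delta$ by recursion on arity: $\Delta(I^2)=[\,\vert\,]$; if the vertical central line is a full cut of $P$, set $\Delta(P)=[\Delta(P_1)\wedgehor\Delta(P_2)]$; otherwise set $\Delta(P)=[\Delta(P_1)\wedgever\Delta(P_2)]$, where $[\cdot]$ denotes the class in $\mathbf{Inter}$.

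The step I expect to be the main obstacle is showing that $\Delta$ is well defined: when \emph{both} central lines are full cuts of $P$, the vertical and horizontal recursions must return the same element of $\mathbf{Inter}$. In that case the two central cuts split $P$ into four corner sub-partitions $A,B,C,D$ (lower-left, lower-right, upper-left, upper-right, each rescaled to $I^2$), and the two decompositions unwind, using the defining formulas for $\to$ and $\uparrow$, as $P=(A\uparrow C)\to(B\uparrow D)$ and $P=(A\to B)\uparrow(C\to D)$; by the inductive hypothesis $\Delta(A),\Delta(B),\Delta(C),\Delta(D)$ are unambiguous, and the two candidate values of $\Delta(P)$ are precisely the two sides of the instance of the interchange law $\boxplus$ with arguments $\Delta(A),\Delta(B),\Delta(C),\Delta(D)$, hence equal in $\mathbf{Inter}$. (The same four-corner analysis, run inductively, also supplies the uniqueness and the dyadicity of $P_1,P_2$ claimed above, handling in particular the case where the root operation of a defining tree is $\wedgever$ yet $P$ still has a vertical central full cut.) Granting well-definedness, $\bar\Gamma\circ\Delta=\mathrm{id}_{\mathbf{DBP}}$ follows by induction on arity, since applying $\Gamma$ to the defining formula for $\Delta(P)$ rebuilds $P$; and $\Delta\circ\bar\Gamma=\mathrm{id}_{\mathbf{Inter}}$ follows from $\Delta(\Gamma(T))\equiv T\pmod{\langle\boxplus\rangle}$ for every tree monomial $T$, proved by induction on the number of internal nodes of $T$: if $T=T_1\wedgehor T_2$ then $\Gamma(T)=\Gamma(T_1)\to\Gamma(T_2)$ has the vertical central line as a full cut, so the first clause of $\Delta$ together with the inductive hypothesis gives $\Delta(\Gamma(T))=[T_1\wedgehor T_2]=[T]$, with well-definedness of $\Delta$ covering the case in which $\Gamma(T)$ also has a horizontal central full cut, and the case $T=T_1\wedgever T_2$ is symmetric. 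Therefore $\bar\Gamma$ is bijective, $\mathbf{Inter}\cong\mathbf{DBP}$, and $\ker\Gamma=\langle\boxplus\rangle$.
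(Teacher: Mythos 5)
Your proposal is correct. The paper's own proof of this lemma is essentially a pointer: the first assertion is declared clear, and the substantive second assertion is delegated to Lemma~\ref{nasrinslemma}, which shows by induction on arity that the relation ``same image under $\Gamma$'' is generated by consequences of the interchange law, by comparing two arbitrary preimages $m_1, m_2$ of a given dyadic partition $P$ and splitting into cases according to which main cuts $P$ possesses (with a separate dihedral-orbit analysis in arity $4$). Your route runs on the same geometric engine --- every dyadic partition of arity $\ge 2$ has at least one main cut, and when both main cuts are present the two root decompositions differ by exactly one application of $\boxplus$ --- but packages it differently: instead of proving that any two preimages of $P$ are congruent, you build an explicit two-sided inverse $\Delta$ of the induced map $\bar\Gamma$ by structural recursion on the partition. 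This buys a cleaner induction (no special low-arity case analysis, since the four-corner reconciliation is exactly one instance of the relator), at the cost of having to verify that the recursion defining $\Delta$ is coherent; the points you correctly flag --- that the two clauses agree when both central lines are full cuts, and that the rescaled halves $P_1, P_2$ are again dyadic even when the root operation of a defining tree points the other way --- are precisely where the content of the paper's Case 3 reappears, and your parenthetical four-corner induction does settle both. In a fully written version you should state the strengthened induction hypothesis explicitly (for \emph{every} full central cut of $P$ in direction $d$, one has $\Delta(P) = [\,\Delta(P_1) \ast_d \Delta(P_2)\,]$), since that is what is actually invoked both in the well-definedness step and in the proof that $\Delta(\Gamma(T)) = [T]$.
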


\begin{proof}
The first statement is clear, the second is Lemma \ref{nasrinslemma},
and the third is an immediate consequence of the first and second.
\end{proof}

\begin{notation}
\label{gammanotation}
We write $\mathrm{I} = \mathrm{ker}(\Gamma) = \langle \boxplus \rangle$,
and $\gamma\colon \mathbf{Inter} \rightarrow \mathbf{DBP}$ for the isomorphism of Lemma \ref{lemma1}.
Then the geometric realization map $\Gamma = \iota \circ \gamma \circ \chi$ factors through
the natural surjection $\chi \colon \mathbf{Free} \twoheadrightarrow \mathbf{Inter}$
and the inclusion $\iota \colon \mathbf{DBP} \hookrightarrow \mathbf{BP}$:
\begin{equation}
\mathbf{Free}
\twoheadrightarrow
\mathbf{Free} / \mathrm{I}
=
\mathbf{Free} / \langle \boxplus \rangle
=
\mathbf{Inter}
\xrightarrow{\;\gamma\;}
\mathbf{DBP}
\hookrightarrow
\mathbf{BP}.
\end{equation}
See Figure \ref{bigpicture}.
\end{notation}

\begin{remark}
We mention but do not elaborate on the similarity between
(i)
the straightforward $n$-dimensional generalizations of the operads $\mathbf{BP}$ and $\mathbf{DBP}$,
and
(ii)
the much-studied operads $E_n$ which are weakly equivalent to the topological operads of little $n$-discs
and little $n$-cubes.
We refer the reader to McClure \& Smith \cite{MS2004} for further details and references.
\end{remark}


\subsection{Associative operads}

The last three operads have associative operations.

\begin{definition}
\label{assocboperad}
$\mathbf{AssocB}$
is the quotient of $\mathbf{Free}$ by the operad ideal $\mathrm{A} = \langle \HA, \VA \rangle$
generated by the associative laws for two operations:
\begin{equation}
\label{associativelaws}
\begin{array}{l}
\HA(a,b,c) = ( \, a \,\wedgehor\, b \, ) \,\wedgehor\, c - a \,\wedgehor\, ( \, b \,\wedgehor\, c \, ),
\\
\VA(a,b,c) = ( \, a \,\wedgever\, b \, ) \,\wedgever\, c - a \,\wedgever\, ( \, b \,\wedgever\, c \, ).
\end{array}
\end{equation}
This \emph{two-associative} operad is denoted $\mathbf{2as}$ by Loday \& Ronco \cite{LR2006}.
It is clumsy to regard the basis elements of $\mathbf{AssocB}$ as cosets of binary tree monomials
in $\mathbf{Free}$ modulo the ideal $\mathrm{A} = \langle \HA, \VA \rangle$.
We write $\overline{\wedgehor}$ and $\overline{\wedgever}$ for the operations in $\mathbf{AssocB}$,
where the bar indicates the quotient modulo $\mathrm{A}$.
To be precise, for tree monomials $x, y \in \mathbf{Free}$, we define
$\overline{\wedgehor}$ and $\overline{\wedgever}$ by these equations:
\begin{equation}
\begin{array}{l}
\left( \, x + \mathrm{A} \, \right)
\,\overline{\wedgehor}\,
\left( \, y + \mathrm{A} \, \right)
=
\left( \, x \,\wedgehor\, y \, \right) + \mathrm{A},
\\
\left( \, x + \mathrm{A} \, \right)
\,\overline{\wedgever}\,
\left( \, y + \mathrm{A} \, \right)
=
\left( \, x \,\wedgever y\, \, \right) + \mathrm{A}.
\end{array}
\end{equation}
\end{definition}

\begin{definition}
\label{assocnboperad}
$\mathbf{AssocNB}$ is an isomorphic copy of $\mathbf{AssocB}$ corresponding to
the following change of basis.
We write $\rho \colon \mathbf{AssocB} \to \mathbf{AssocNB}$ to represent rewriting
a coset representative (a binary tree) as a nonbinary (= not necessarily binary) tree.
The new basis consists of the disjoint union
$\{ x_1 \} \sqcup \mathbb{T}_\wedgehor \sqcup \mathbb{T}_\wedgever$
of the isolated leaf $x_1$ and two copies of the set $\mathbb{T}$
of all labelled rooted \emph{not necessarily binary} plane trees with
at least one internal node (counting the root).
We assume that each internal node has at least two children, and so
every tree in $\mathbb{T}$ has at least two leaves.
If $T$ is a tree in $\mathbb{T}$ with root $r$,
then the \emph{level} $\ell(s)$ of any internal node $s$
is the length of the unique path from $r$ to $s$ in $T$.
In $\mathbb{T}_\wedgehor$,
the root $r$ of every tree $T$ has label $\wedgehor$,
and the label of an internal node $s$ is $\wedgehor$ (resp.~$\wedgever$)
if $\ell(s)$ is even (resp.~odd).
In $\mathbb{T}_\wedgever$, the labels of the internal nodes are reversed.
If $T$ is in $\mathbb{T}_\wedgehor \sqcup \mathbb{T}_\wedgever$ (so $n \ge 2$),
then we include the $n!$ trees for all bijections between the leaves and
the argument symbols $\arg_1, \dots, \arg_n$.
Lemma \ref{assoclemma} gives a precise statement of the bijection between these two bases.
For further information, see Loday \& Ronco \cite[\S5]{LR2006}.
\end{definition}

\begin{remark}
If the choice of basis is not relevant, then we write $\mathbf{Assoc}$ to represent
the operad $\mathbf{AssocB} \cong \mathbf{AssocNB}$.
\end{remark}

\begin{definition}
\label{diaoperad}
$\mathbf{DIA}$
is the quotient of $\mathbf{Free}$ by the operad ideal $\langle \HA, \VA, \boxplus \rangle$.
This is the operad governing double interchange \emph{algebras},
which possess two associative operations satisfying the interchange law.
\end{definition}


\subsection{Set and vector operads}
\label{vectorsetoperads}

The operads $\mathbf{Inter}$, $\mathbf{AssocB}$, $\mathbf{AssocNB}$, $\mathbf{DIA}$
are defined by relations of the form $v_1 - v_2 \equiv 0$ (equivalently $v_1 \equiv v_2$)
where $v_1, v_2$ are cosets of tree monomials in $\mathbf{Free}$.
We could therefore work entirely with set operads, since we never need to consider linear combinations.

Vector spaces and sets are connected by a pair of adjoint functors:
the forgetful functor sending a vector space $V$ to its underlying set,
and the left adjoint sending a set $S$ to the free vector space on $S$
(the vector space with basis $S$).

The connection between vector spaces and sets is reflected in the relation between 
Gr\"obner bases for operads and the theory of rewriting systems:
if we compute a syzygy for two tree polynomials $v_1 - v_2$ and $w_1 - w_2$,
then the common multiple of the leading terms cancels, and we obtain another difference of tree monomials;
similarly, from a critical pair of rewrite rules $v_1 \mapsto v_2$ and $w_1 \mapsto w_2$,
we obtain another rewrite rule\footnote{We thank Vladimir Dotsenko for this clarification.}.

We state our main results in terms of set operads,
but strictly speaking, we work with algebraic operads.
A double interchange semigroup is a module over an operad $\mathbf{DIS}$ in the category of sets;
the corresponding notion over the algebraic operad $\mathbf{DIA}$ is a double interchange algebra.
Our main reason for using algebraic rather than set operads is that the former theory is
much better developed.
For more about the relation between set and algebraic operads, see Giraudo \cite[\S1.1.2]{Giraudo2016}.


\subsection{Morphisms between operads}
\label{morphisms}

Our goal in this paper is to understand the operad $\mathbf{DIA}$, which is the quotient of
$\mathbf{Free}$ by the operad ideal generated by the associative and interchange laws.
We have no convenient normal form for the basis monomials of $\mathbf{DIA}$ (that is,
no convenient way to choose a canonical representative for each equivalence class in
the quotient operad).
As we have just seen, there is a convenient normal form when we factor out associativity
but not interchange.
As we will see later (Lemma \ref{nasrinslemma}), there is also a convenient normal form
when we factor out interchange but not associativity: the dyadic block partitions.

Our approach will be to use the (tree) monomial basis of the operad $\mathbf{Free}$;
to these monomials, we apply rewriting rules which express associativity of each
operation (from right to left,  or the reverse) and the interchange law between the operations
(from black to white, or the reverse).
These rewritings convert one tree monomial in $\mathbf{Free}$ to another tree monomial which
is equivalent to the first modulo associativity and interchange.
In other words, given an element $X$ of $\mathbf{DIA}$ represented by a tree monomial
$T$ in $\mathbf{Free}$, these rewritings convert $T$ to another tree monomial $T'$ which
is in the same inverse image as $T$ with respect to the natural surjection
$\mathbf{Free} \twoheadrightarrow \mathbf{DIA}$.
We must allow undirected rewriting because of the complicated way
in which associativity and interchange interact: in order to pass from $T$ to $T'$,
we may need to apply associativity from left to right, then apply interchange, and then apply
associativity from right to left.

We present a commutative diagram of operads and morphisms in Figure \ref{bigpicture}:
\begin{itemize}[leftmargin=*]
\item
$\alpha$ is the natural surjection from $\mathbf{Free}$ onto $\mathbf{Assoc} = \mathbf{Free} / \mathrm{A}$.
\item
$\chi$ is the natural surjection from $\mathbf{Free}$ onto $\mathbf{Inter} = \mathbf{Free} / \mathrm{I}$.
\item
$\overline{\alpha}$ is the natural surjection from $\mathbf{Inter}$ onto
$\mathbf{DIA} = \mathbf{Inter} / \langle\HA{+}\mathrm{I},\VA{+}\mathrm{I}\rangle$.
\item
$\overline{\chi}$ is the natural surjection from $\mathbf{Assoc}$ onto
$\mathbf{DIA} = \mathbf{Assoc} / \langle\boxplus{+}\mathrm{A}\rangle$.
\item
$\overline{\chi} \circ \alpha = \overline{\alpha} \circ \chi$: the diagram commutes.
\item
For $\gamma$, $\iota$, $\Gamma = \iota \circ \gamma \circ \chi$
see Definition \ref{defgeomap} and Notation \ref{gammanotation}.
\item
For $\rho$ see Definition \ref{assocnboperad}.
\end{itemize}

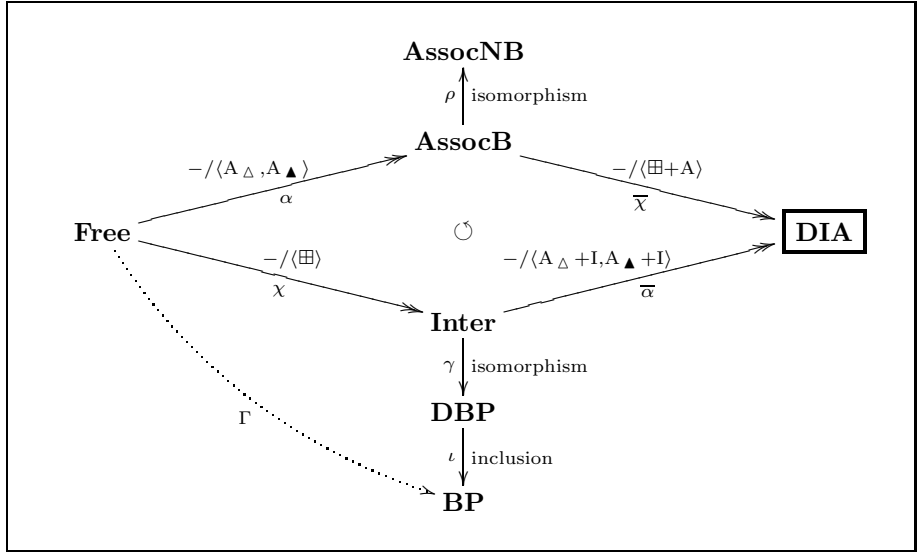
\begin{figure}[ht]
\begin{adjustbox}{center}
\setlength{\fboxsep}{12pt}
\fbox{
\setlength{\fboxsep}{4pt}
\setlength\fboxrule{1pt}
\begin{xy}
(  0,   0 )*+{\mathbf{Free}} = "free";
( 48, -36 )*+{\mathbf{BP}} = "bp";
( 48, -24 )*+{\mathbf{DBP}} = "dbp";
( 48, -12 )*+{\mathbf{Inter}} = "inter";
( 48,   0 )*+{\circlearrowleft} = "x";
( 48,  12 )*+{\mathbf{AssocB}} = "assocb";
( 48,  24 )*+{\mathbf{AssocNB}} = "assocnb";
( 96,   0 )*+{\boxed{\mathbf{DIA}}} = "dia";
{ \ar@{->>}^{-/\langle\HA,\VA\rangle\quad}_{\alpha} "free"; "assocb" };
{ \ar@{->>}^{-/\langle\boxplus\rangle}_{\chi} "free"; "inter" };
{ \ar@/_6mm/@{.>}_{\Gamma} "free"; "bp" };
{ \ar@{->}_{\gamma}^{\text{isomorphism}} "inter"; "dbp" };
{ \ar@{->>}^{-/\langle\HA{+}\mathrm{I},\VA{+}\mathrm{I}\rangle\qquad}_{\overline{\alpha}} "inter"; "dia" };
{ \ar@{->}^{\rho}_{\text{isomorphism}} "assocb"; "assocnb" };
{ \ar@{->>}^{-/\langle\boxplus{+}\mathrm{A}\rangle}_{\overline{\chi}} "assocb"; "dia" };
{ \ar@{->}^{\text{inclusion}}_{\iota} "dbp"; "bp" };
\end{xy}
}
\end{adjustbox}
\vspace{-10pt}
\caption{Big picture of operads and morphisms for rewriting monomials}
\label{bigpicture}
\end{figure}


\subsection{Diagram chasing and commutativity}
\label{diagramchasing}

By a \emph{monomial} $X$ in $\mathbf{DIA}$, we mean an equivalence class of (tree) monomials in $\mathbf{Free}$,
modulo the equivalence relation generated by the relations $\HA$, $\VA$, $\boxplus$.
Thus $X$ is a nonempty subset of (tree) monomials in $\mathbf{Free}$,
and a representative of $X$ is simply an element of $X$.

We start with a monomial $X$ in $\mathbf{DIA}$ and choose a convenient representative
(tree) monomial $T \in X$.
To the tree monomial $T$, we freely apply any sequence of rewrite rules of the following two types:
\begin{itemize}[leftmargin=*]
\item
\emph{Reassociating} in either direction (left to right or right to left)
with respect to either operation, $\wedgehor$ or $\wedgever$:
this means applying $\alpha$ to $T$ to obtain a unique element of $\mathbf{Assoc}$,
namely the coset $\alpha(T) = T + \mathrm{A}$;
rewriting the binary tree $T$ as a nonbinary tree as explained in Definition \ref{assocnboperad};
and applying $\alpha^{-1}$ by choosing a different binary tree $T'$ representing the same nonbinary tree:
$T + \mathrm{A} = T' + \mathrm{A}$, i.e., $\alpha(T) = \alpha(T')$.
\item
\emph{Interchanging} in either direction, left to right or right to left
(more precisely horizontal to vertical, or vertical to horizontal,
i.e., white root to black root, or black root to white root): this means
applying $\chi$ to $T$ to obtain a unique element of $\mathbf{Inter}$,
namely the coset $\chi(T) = T + \mathrm{I}$;
rewriting the binary tree $T$ as a dyadic block partition (Lemma \ref{nasrinslemma}:
interchange may only be applied in an unambiguous way to a \emph{binary} tree);
and applying $\chi^{-1}$ by choosing a different binary tree $T'$ representing the same dyadic block partition:
$T + \mathrm{I} = T' + \mathrm{I}$, i.e., $\chi(T) = \chi(T')$.
\end{itemize}
The role played by the nonbinary trees when reassociating is analogous to the role played by
the dyadic block partitions when interchanging:
the actual rewriting of the coset representatives (the tree monomials in $\mathbf{Free}$)
takes place using $\rho$ and $\rho^{-1}$ for associativity,
and $\gamma$ and $\gamma^{-1}$ for the interchange law.
We point out that:
\begin{itemize}[leftmargin=*]
\item
applying associativity, $T \mapsto T' \in \alpha^{-1}(\alpha(T))$,
changes the corresponding dyadic block partition,
but does not change the nonbinary tree monomial;
\item
applying the interchange law, $T \mapsto T' \in \chi^{-1}(\chi(T))$,
changes the corresponding nonbinary tree monomial,
but does not change the dyadic block partition.
\end{itemize}
This rewriting process is unavoidable because we do not have a well-defined normal form
for elements in $\mathbf{DIA}$, but we do have easily computable normal forms for elements of
$\mathbf{Free}$, $\mathbf{Assoc} = \mathbf{Free} / \langle \HA, \VA \rangle$ and
$\mathbf{Inter} = \mathbf{Free} / \langle \boxplus \rangle$.

We apply any number of these rewrite rules in any order,
and stop if and when we obtain a tree monomial $T''$ \emph{identical} to
the original monomial $T$ \emph{except} for the permutation of the arguments.
The equality in $\mathbf{DIA}$ of the cosets of the tree monomials $T$ and $T''$
in $\mathbf{Free}$ is a multilinear commutativity relation for double interchange algebras,
or equivalently for double interchange semigroups
(since we have been working exclusively with basis monomials).

\begin{figure}[ht]
\setlength{\fboxsep}{10pt}
\[
\boxed{
\begin{array}{l}
( ( a \wedgever b ) \wedgever c ) \wedgehor ( ( d \wedgehor e ) \wedgever f)
\;\; [\in \mathbf{Free}]
\\
\xrightarrow{\makebox[12mm]{$\alpha$}} \;
( ( a \wedgever b ) \wedgever c ) \wedgehor ( ( d \wedgehor e ) \wedgever f) + \mathrm{A}
\;\; [\in \mathbf{AssocB}]
\\
\xrightarrow{\makebox[12mm]{$\rho$}} \;
( a \wedgever b \wedgever c ) \wedgehor ( ( d \wedgehor e ) \wedgever f) + \mathrm{A}
\;\; [\in \mathbf{AssocNB}]
\\
\xrightarrow{\makebox[12mm]{$ \rho^{-1} $}} \;
( a \wedgever ( b \wedgever c ) ) \wedgehor ( ( d \wedgehor e ) \wedgever f) + \mathrm{A}
\;\; [\in \mathbf{AssocB}]
\\
\xrightarrow{\makebox[12mm]{$\alpha^{-1}$}} \;
( a \wedgever ( b \wedgever c ) ) \wedgehor ( ( d \wedgehor e ) \wedgever f)
\;\; [\in \mathbf{Free}]
\\
\xrightarrow{\makebox[12mm]{$\chi$}} \;
( a \wedgever ( b \wedgever c ) ) \wedgehor ( ( d \wedgehor e ) \wedgever f) + \mathrm{I}
\;\; [\in \mathbf{Inter}]
\\[8pt]
\xrightarrow{\makebox[12mm]{$\gamma$}} \;
\adjustbox{valign=m}
{\begin{xy}
(  0,   0 ) = "1";
(  0,  12 ) = "2";
(  0,  24 ) = "3";
( 12,   0 ) = "4";
( 12,  12 ) = "5";
( 12,  24 ) = "6";
( 24,   0 ) = "7";
( 24,  12 ) = "8";
( 24,  24 ) = "9";
(  0,  18 ) = "10";
( 12,  18 ) = "11";
( 18,   0 ) = "12";
( 18,  12 ) = "13";
{ \ar@{-} "1"; "3" };
{ \ar@{=} "4"; "6" };
{ \ar@{-} "7"; "9" };
{ \ar@{-} "1"; "7" };
{ \ar@{-} "2"; "8" };
{ \ar@{-} "3"; "9" };
{ \ar@{-} "10"; "11" };
{ \ar@{-} "12"; "13" };
(  6,   6 )*+{a} = "a";
(  6,  15 )*+{b} = "b";
(  6,  21 )*+{c} = "c";
( 15,   6 )*+{d} = "d";
( 21,   6 )*+{e} = "e";
( 18,  18 )*+{f} = "f";
\end{xy}}
=
\adjustbox{valign=m}
{\begin{xy}
(  0,   0 ) = "1";
(  0,  12 ) = "2";
(  0,  24 ) = "3";
( 12,   0 ) = "4";
( 12,  12 ) = "5";
( 12,  24 ) = "6";
( 24,   0 ) = "7";
( 24,  12 ) = "8";
( 24,  24 ) = "9";
(  0,  18 ) = "10";
( 12,  18 ) = "11";
( 18,   0 ) = "12";
( 18,  12 ) = "13";
{ \ar@{-} "1"; "3" };
{ \ar@{-} "4"; "6" };
{ \ar@{-} "7"; "9" };
{ \ar@{-} "1"; "7" };
{ \ar@{=} "2"; "8" };
{ \ar@{-} "3"; "9" };
{ \ar@{-} "10"; "11" };
{ \ar@{-} "12"; "13" };
(  6,   6 )*+{a} = "a";
(  6,  15 )*+{b} = "b";
(  6,  21 )*+{c} = "c";
( 15,   6 )*+{d} = "d";
( 21,   6 )*+{e} = "e";
( 18,  18 )*+{f} = "f";
\end{xy}}
\;\; [\in \mathbf{DBP}] \;\;
\left( \!\! \begin{tabular}{c} double line \\ denotes root \\ operation \end{tabular} \!\! \right)
\\
\xrightarrow{\makebox[12mm]{$\gamma^{-1}$}} \;
( a \wedgehor ( d \wedgehor e ) ) \wedgever ( ( b \wedgever c ) \wedgehor f) + \mathrm{I}
\;\; [\in \mathbf{Inter}]
\\
\xrightarrow{\makebox[12mm]{$\chi^{-1}$}} \;
( a \wedgehor ( d \wedgehor e ) ) \wedgever ( ( b \wedgever c ) \wedgehor f)
\;\; [\in \mathbf{Free}]
\\
\xrightarrow{\makebox[12mm]{$\alpha$}} \;
( a \wedgehor ( d \wedgehor e ) ) \wedgever ( ( b \wedgever c ) \wedgehor f) + \mathrm{A}
\;\; [\in \mathbf{AssocB}]
\\
\xrightarrow{\makebox[12mm]{$ \rho $}} \;
( a \wedgehor d \wedgehor e ) \wedgever ( ( b \wedgever c ) \wedgehor f) + \mathrm{A}
\;\; [\in \mathbf{AssocNB}]
\\
\xrightarrow{\makebox[12mm]{$ \rho^{-1} $}} \;
( ( a \wedgehor d ) \wedgehor e ) \wedgever ( ( b \wedgever c ) \wedgehor f) + \mathrm{A}
\;\; [\in \mathbf{AssocB}]
\\
\xrightarrow{\makebox[12mm]{$\alpha^{-1}$}} \;
( ( a \wedgehor d ) \wedgehor e ) \wedgever ( ( b \wedgever c ) \wedgehor f)
\;\; [\in \mathbf{Free}]
\end{array}
}
\]
\vspace{-15pt}
\caption{Example of rewriting in free double interchange semigroups}
\label{rewritingexample}
\end{figure}

\begin{example}
In Figure \ref{rewritingexample} we present a simple exercise in rewriting,
which does not lead to a commutativity property (such an example would require at least nine variables),
but which should suffice to illustrate the preceding discussion.
For clarity, we include in square brackets the position in Figure \ref{bigpicture} of the current object.
We emphasize that we never work directly with elements of $\mathbf{DIA}$.
\end{example}


\section{Background in categorical algebra}

Most of the operadic and geometric objects studied in this paper originate in category theory;
we mention Mac Lane \cite{MacLane1965}, Kelly \& Street \cite{KS1974}, Street \cite{Street1996}
as mathematical references, Kr\"omer \cite{Kromer2007} for historical and philosophical aspects.


\subsection{Many binary operations}

Many different structures may be regarded as extensions of the notion of semigroup to
the case of $d \ge 2$ binary operations.
(The results of this paper concern only $d = 2$.)
We give definitions for the general case $d \ge 2$ only when
this requires no more space than $d = 2$.

\begin{definition}
\label{defmtuplesemigroup}
A $d$-\emph{tuple magma} is a nonempty set $S$ with $d$ binary operations
$S \times S \to S$, denoted $(a,b) \mapsto a \star_i b$ for $1 \le i \le d$.
A $d$-\emph{tuple semigroup} is a $d$-tuple magma
in which every operation satisfies the associative law.
A $d$-\emph{tuple interchange magma} is a $d$-tuple magma
in which every pair of distinct operations satisfies the interchange law.
A $d$-\emph{tuple interchange semigroup} is a $d$-tuple semigroup
in which every pair of distinct operations satisfies the interchange law.
(Some authors refer to the last structure simply as ``a $d$-tuple semigroup''.)
\end{definition}

\begin{example}
\label{exampleinterchange}
Double interchange magmas have operations $\rightarrow$ (horizontal) and $\uparrow$ (vertical)
related by the interchange law, which expresses the equality of two sequences of bisections
which partition a square into four smaller squares:
\[
( a \rightarrow b ) \uparrow ( c \rightarrow d )
\, \equiv
\begin{array}{c}
\begin{tikzpicture}[draw=black, x=6 mm, y=6 mm]
\node [Square] at ($(0, 0.0)$) {$a$};
\node [Square] at ($(1, 0.0)$) {$b$};
\node [Square] at ($(0,-1.2)$) {$c$};
\node [Square] at ($(1,-1.2)$) {$d$};
\end{tikzpicture}
\end{array}
\equiv
\begin{array}{c}
\begin{tikzpicture}[draw=black, x=6 mm, y=6 mm]
\node [Square] at ($(0, 0)$) {$a$};
\node [Square] at ($(1, 0)$) {$b$};
\node [Square] at ($(0,-1)$) {$c$};
\node [Square] at ($(1,-1)$) {$d$};
\end{tikzpicture}
\end{array}
\equiv
\begin{array}{c}
\begin{tikzpicture}[draw=black, x=6 mm, y=6 mm]
\node [Square] at ($(0.0, 0)$) {$a$};
\node [Square] at ($(1.2, 0)$) {$b$};
\node [Square] at ($(0.0,-1)$) {$c$};
\node [Square] at ($(1.2,-1)$) {$d$};
\end{tikzpicture}
\end{array}
\equiv \;
( a \uparrow c ) \rightarrow ( b \uparrow d ).
\]
\end{example}


\subsection{The Eckmann-Hilton argument}

Structures with binary operations $\rightarrow$ and $\uparrow$
satisfying the interchange law arose during the late 1950s and early 1960s,
in universal algebra, algebraic topology, and double category theory.
The operations are usually associative, but even without this assumption,
Eckmann \& Hilton \cite{EH1962} showed that if we allow them to possess unit elements
$e_\rightarrow$ and $e_\uparrow$ then the interchange law forces the units to be equal:
\begin{align*}
e_\rightarrow
=
e_\rightarrow \rightarrow e_\rightarrow
&=
( e_\rightarrow \uparrow e_\uparrow ) \rightarrow ( e_\uparrow \uparrow e_\rightarrow )
\\
&=
( e_\rightarrow \rightarrow e_\uparrow ) \uparrow ( e_\uparrow \rightarrow e_\rightarrow )
=
e_\uparrow \uparrow e_\uparrow
=
e_\uparrow.
\end{align*}
From this, it further follows that each operation is the opposite of the other,
and that the two operations coincide:
if we write $e = e_\rightarrow = e_\uparrow$ then
\begin{align*}
&
x \rightarrow y
\equiv
( e \uparrow x ) \rightarrow ( y \uparrow e )
\equiv
( e \rightarrow y ) \uparrow ( x \rightarrow e )
\equiv
y \uparrow x,
\\
&
y \uparrow x
\equiv
( y \rightarrow e ) \uparrow ( e \rightarrow x )
\equiv
( y \uparrow e ) \rightarrow ( e \uparrow x )
\equiv
y \rightarrow x.
\end{align*}
Thus there remains \emph{one commutative operation},
which is in fact also \emph{associative}:
\[
( a b ) c \equiv ( a b ) ( e c ) \equiv ( a e ) ( b c ) \equiv a ( b c ).
\]
Hence we assume that \emph{at most one} of the operations possesses a unit element.

\begin{theorem}
Let $S$ be an $d$-tuple interchange magma with operations $\star_1, \dots, \star_d$ for $d \ge 2$.
If these operations have unit elements, then the units are equal, the operations coincide,
and the remaining operation is commutative and associative.
\end{theorem}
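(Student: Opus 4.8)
The plan is to generalize the two-operation Eckmann--Hilton argument presented just above the statement to an arbitrary pair of distinct operations among $\star_1, \dots, \star_d$, and then to deduce that all $d$ operations coincide with a single commutative, associative operation. First I would fix any two distinct indices $i \ne j$ and write $e_i, e_j$ for the unit elements of $\star_i, \star_j$. Applying the interchange law for the pair $(\star_i, \star_j)$ exactly as in the displayed computation (with $\rightarrow$ replaced by $\star_i$ and $\uparrow$ by $\star_j$) gives
\[
e_i = e_i \star_i e_i = (e_i \star_j e_j) \star_i (e_j \star_j e_i) = (e_i \star_i e_j) \star_j (e_j \star_i e_i) = e_j \star_j e_j = e_j.
\]
Since $i, j$ were arbitrary, all the units coincide; write $e$ for the common value. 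The same substitution in the second displayed block shows $x \star_i y \equiv y \star_j x$ and $y \star_j x \equiv y \star_i x$ for every such pair, hence $x \star_i y \equiv x \star_j y$: any two of the operations are equal. Therefore there is a single binary operation, call it $\cdot$, with $a \cdot b = a \star_i b$ for all $i$; it has unit $e$, and from $x \cdot y \equiv y \star_j x \equiv y \cdot x$ it is commutative.

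Next I would verify associativity of $\cdot$ by the one-line computation already displayed, $(ab)c \equiv (ab)(ec) \equiv (ae)(bc) \equiv a(bc)$, where the middle step is the interchange law for any pair of (now identical) operations together with the unit property of $e$. This is routine once the preceding identifications are in place.

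The only genuine subtlety — and the step I would flag as the main obstacle, though it is mild — is making sure the interchange hypothesis is actually available for every pair used in the argument. The definition of a $d$-tuple interchange magma requires the interchange law only between \emph{distinct} operations, so the Eckmann--Hilton cancellation can be run for any $i \ne j$; this is why $d \ge 2$ is needed. After one concludes $\star_i = \star_j$ for all $i, j$, the associativity computation uses the interchange law ``for a pair of distinct operations'' that have been proven equal — one should note that this is legitimate because the original axiom was imposed between the distinct \emph{formal} operations before the identification, and the resulting identity $(a \cdot b) \cdot (c \cdot d) \equiv (a \cdot c) \cdot (b \cdot d)$ then holds for the single surviving operation. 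With that remark, the proof is complete; the bulk of it is simply transcribing the two-operation argument given in the text with indices inserted.
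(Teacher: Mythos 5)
Your proposal is correct and follows essentially the route the paper intends: the paper's own ``proof'' is just a citation to Eckmann--Hilton, and your argument is precisely the two-operation computation displayed immediately before the theorem, transcribed with indices $i \ne j$ and then closed off with the unit-based associativity identity $(ab)c \equiv (ab)(ec) \equiv (ae)(bc) \equiv a(bc)$. Your remark that the interchange law is only hypothesized between \emph{distinct} formal operations, and that it transfers to the single surviving operation after the identification $\star_i = \star_j$, is exactly the right point to flag and is handled correctly.
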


\begin{proof}
See the paper of Eckmann \& Hilton \cite{EH1962}, especially
Theorem 3.33 (page 236), the definition of $\mathbf{H}$-structure (page 241), and Theorem 4.17 (page 244).
\end{proof}


\subsection{Double categories}

Extension of the notion of semigroup to sets with $d \ge 2$ operations received
a strong impetus in the 1960s from different approaches to two-dimensional category theory:
see Ehresmann \cite{Ehresmann1963} for double categories,
B\'enabou \cite{Benabou1967} for bicategories,
Kelly \& Street \cite{KS1974} for 2-categories.
The survey by Street \cite{Street1996} also covers higher-dimensional categories;
pasting diagrams \cite{Johnson1989,Power1991} and parity complexes \cite{Street1991}
arose as extensions of the interchange law to higher dimensions.

For our purposes, the most relevant concept is that of double category;
we mention in particular the work of Dawson \& Par\'e \cite{DP1993}.
The most natural example is the double category $\mathbf{Cat}$ which has small categories as objects,
functors as 1-morphisms, and natural transformations as 2-morphisms;
it has two associative operations,
horizontal composition of functors and vertical composition of natural transformations,
which satisfy the interchange law.

\begin{definition}
A \emph{double category} $\mathbf{D}$ is an ordered pair of categories $( \mathbf{D}_0, \mathbf{D}_1 )$,
together with functors
$e \colon \mathbf{D}_0 \to \mathbf{D}_1$ and $s, t \colon \mathbf{D}_1 \to \mathbf{D}_0$.
In $\mathbf{D}_0$ we denote objects by capital Latin letters $A$, $A'$, \dots
(the 0-cells of $\mathbf{D}$)
and morphisms by arrows labelled with lower-case italic letters $u$, $v$, \dots
(the vertical 1-cells of $\mathbf{D}$).
In $\mathbf{D}_1$ we denote objects by arrows labelled with
lower-case italic letters $h$, $k$, \dots
(the horizontal 1-cells of $\mathbf{D}$),
and the morphisms by lower-case Greek letters $\alpha$, $\beta$, \dots
(the 2-cells of $\mathbf{D}$).
If $A$ is an object in $\mathbf{D}_0$ then $e(A)$ is the (horizontal) identity arrow on $A$.
(Recall by the Eckmann-Hilton argument that identity arrows may exist in only one direction.)
The functors $s$ and $t$ are the source and target:
if $h$ is a horizontal arrow in $\mathbf{D}_1$ then $s(h)$ and $t(h)$ are its domain and codomain,
objects in $\mathbf{D}_0$.
These three functors are related by the equation $s(e(A)) = t(e(A)) = A$ for every $A$.
For 2-cells $\alpha$, $\beta$ horizontal composition $\alpha \boxmid \beta$ and vertical composition
$\alpha \boxminus \beta$ are defined by the following diagrams and satisfy the interchange law:
\begin{equation*}
\adjustbox{valign=m}{%
\begin{xy}
(  0,  0 )*+{A} = "a";
( 12,  0 )*+{C} = "c";
( 24,  0 )*+{E} = "e";
(  0, 12 )*+{B} = "b";
( 12, 12 )*+{D} = "d";
( 24, 12 )*+{F} = "f";
(  6,   1 ) = "ac";
(  6,  11 ) = "bd";
( 18,   1 ) = "ce";
( 18,  11 ) = "df";
{ \ar@{->}^{u} "a"; "b" };
{ \ar@{->}_{v} "c"; "d" };
{ \ar@{->}_{w} "e"; "f" };
{ \ar@{->}_{h} "a"; "c" };
{ \ar@{->}_{k} "c"; "e" };
{ \ar@{->}^{\ell} "b"; "d" };
{ \ar@{->}^{m} "d"; "f" };
{ \ar@{=>}_{\alpha} "ac"; "bd" };
{ \ar@{=>}_{\beta}  "ce"; "df" };
\end{xy}
}
\!\!\cong
\adjustbox{valign=m}{%
\begin{xy}
(  0,  0 )*+{A} = "a";
( 18,  0 )*+{E} = "e";
(  0, 12 )*+{B} = "b";
( 18, 12 )*+{F} = "f";
(  9,   1 ) = "ae";
(  9,  11 ) = "bf";
{ \ar@{->}^{u} "a"; "b" };
{ \ar@{->}_{w} "e"; "f" };
{ \ar@{->}_{k \circ h} "a"; "e" };
{ \ar@{->}^{m \circ \ell} "b"; "f" };
{ \ar@{=>}_{\alpha \boxmid \beta} "ae"; "bf" };
\end{xy}
}
\quad
\adjustbox{valign=m}{%
\begin{xy}
(  0,  0 )*+{A} = "a";
(  0, 12 )*+{B} = "b";
(  0, 24 )*+{C} = "c";
( 12,  0 )*+{D} = "d";
( 12, 12 )*+{E} = "e";
( 12, 24 )*+{F} = "f";
(  6,   1 ) = "ad";
(  6,  11 ) = "be-";
(  6,  13 ) = "be+";
(  6,  23 ) = "cf";
{ \ar@{->}^{u} "a"; "b" };
{ \ar@{->}^{v} "b"; "c" };
{ \ar@{->}_{w} "d"; "e" };
{ \ar@{->}_{x} "e"; "f" };
{ \ar@{->}_{h} "a"; "d" };
{ \ar@{->}_{} "b"; "e" };
{ \ar@{->}^{\ell} "c"; "f" };
{ \ar@{=>}_{\alpha} "ad"; "be-" };
{ \ar@{=>}_{\beta}  "be+"; "cf" };
\end{xy}
}
\!\!\!\!\cong
\adjustbox{valign=m}{%
\begin{xy}
(  0,  0 )*+{A} = "a";
(  0, 12 )*+{C} = "c";
( 18,  0 )*+{D} = "d";
( 18, 12 )*+{F} = "f";
(  9,   1 ) = "h";
(  9,  11 ) = "\ell";
{ \ar@{->}_{h} "a"; "d" };
{ \ar@{->}^{\ell} "c"; "f" };
{ \ar@{->}^{v \circ u} "a"; "c" };
{ \ar@{->}_{x \circ w} "d"; "f" };
{ \ar@{=>}_{\alpha \boxminus \beta} "h"; "\ell" };
\end{xy}
}
\end{equation*}
A monoid may be viewed as a category with one object;
this restriction guarantees that all morphisms are composable.
Similarly, a double interchange semigroup may be viewed as a double category with one object.
If we retain associativity and omit interchange, then we obtain a sesquicategory;
see Stell \cite{Stell1995}.
\end{definition}


\subsection{Endomorphism PROPs}

In the category of vector spaces and linear maps over a field $\mathbb{F}$,
the endomorphism PROP of $V$ is the bigraded direct sum
\[
\mathbf{End}(V) =
\bigoplus_{p, q \ge 0} \mathbf{End}(V)^{p,q} =
\bigoplus_{p, q \ge 0} \mathbf{Lin}( V^{\otimes p}, V^{\otimes q} ),
\]
where $\mathbf{Lin}( V^{\otimes p}, V^{\otimes q} )$ is the vector space of all linear maps
$V^{\otimes p} \longrightarrow V^{\otimes q}$.
On $\mathbf{End}(V)$ there are two natural bilinear operations:
\begin{itemize}[leftmargin=*]
\item
The horizontal product:
for $f\colon V^{\otimes p} \longrightarrow V^{\otimes q}$
and $g\colon V^{\otimes r} \longrightarrow V^{\otimes s}$ we define the operation
$\otimes\colon \mathbf{End}(V)^{p,q} \otimes \mathbf{End}(V)^{r,s} \longrightarrow \mathbf{End}(V)^{p+r,q+s}$
as follows:
\[
f \otimes g \colon
V^{\otimes (p+r)} \cong V^{\otimes p} \otimes V^{\otimes r}
\longrightarrow
V^{\otimes q} \otimes V^{\otimes s} \cong V^{\otimes (q+s)}.
\]
\item
The vertical product:
for $f\colon V^{\otimes p} \longrightarrow V^{\otimes q}$
and $g\colon V^{\otimes q} \longrightarrow V^{\otimes r}$ we define
the operation
$\circ \colon \mathbf{End}(V)^{p,q} \otimes \mathbf{End}(V)^{q,r} \longrightarrow \mathbf{End}(V)^{p,r}$
as follows:
\[
g \circ f \colon V^{\otimes p} \longrightarrow V^{\otimes r}.
\]
\end{itemize}
These two operations satisfy the interchange law.
If $f\colon V \rightarrow V'$ and $g\colon W \rightarrow W'$ then
$f \otimes g \colon V \otimes W \longrightarrow V' \otimes W'$
is defined by interchange between $\otimes$ and function \emph{evaluation}:
$( f \otimes g )( v \otimes w ) \equiv f(v) \otimes g(w)$.
If $f'\colon V' \rightarrow V''$ and $g'\colon W' \rightarrow W''$ then
composition of tensor products of maps is defined by interchange between
$\otimes$ and function \emph{composition}:
$( f' \otimes g' ) \circ ( f \otimes g ) \equiv ( f' \circ f ) \otimes ( g' \circ g )$.


\subsection{Tree sequences and Thompson's group}

We consider the group of symmetries of the set of all dyadic partitions of
the open unit interval $I = (0,1)$.

\begin{definition} \label{defdyadicsubset}
A number $x \in I$ is \emph{dyadic of level $b$} if $x = a 2^{-b}$ for positive integers $a, b$
where $a$ is \emph{odd} and $1 \le a \le 2^b{-}1$.
A dyadic subset $C \subset I$ is a \emph{tree sequence} (or \emph{dyadic partition})
if $C$ is obtained from $I$ by a sequence of (exact) bisections of open subintervals.
(Thus $C$ is the image of an unlabelled plane rooted complete binary tree under
the one-dimensional geometric realization map.)
For every $a 2^{-b} \in C$, exactly one of $a{-}1$, $a{+}1$ is twice an odd number,
say $2a'$, and the other is divisible by 4.
Then a dyadic subset is a tree sequence if and only if
$x = a 2^{-b} \in C$ implies $p(x) = a' 2^{-b+1} \in C$;
that is, every $x \in C$ has a tree parent in $C$.
\end{definition}

\begin{definition}
Let $f$ be a homeomorphism of $[0,1]$ which fixes the endpoints and is piecewise linear.
Assume that the subset of $(0,1)$ at which $f$ is not differentiable is a tree sequence,
and that at all other interior points $f'(x)$ is a power of 2.
The set of all such $f$ is a group under function composition,
called \emph{Thompson's group} $F$.
For further information, see Cannon et al.~\cite{CFP1996}.
\end{definition}

Let $A = \{ a_1, \dots, a_n \}$ and $B = \{ b_1, \dots, b_n \}$ be (strictly increasing)
tree sequences of size $n$ partitioning $(0,1)$ into $n{+}1$ subintervals.
We have $f(A) = B$ where $f \in F$ is linear on each subinterval and 
satisfies $f(a_i) = b_i$ for $1 \le i \le n$.
Thus $F$ describes transformations from one rooted binary tree to another.
Plane rooted complete binary trees with $n$ internal nodes are in bijection with association types
for nonassociative products of $n{+}1$ factors.
Hence, we may also regard $F$ as consisting of transformations from one association type to another;
in this case, we call $f \in F$ a \emph{reassociation} of the parentheses.
We display the bijection between tree sequences and association types for arities $\le 5$ in Figure \ref{treesequences}.

\begin{figure}[ht]
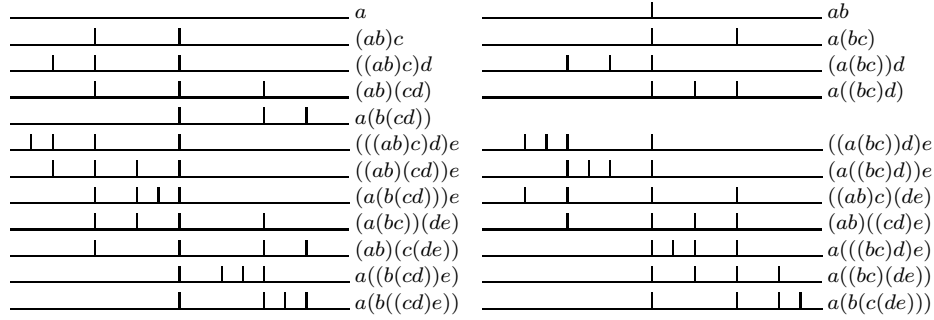

{\footnotesize
\[
\begin{array}{l@{\;}l@{\quad}l@{\;}l@{\,}}
\line(1,0){128} & a
&
\line(1,0){64}\line(0,1){6}\line(1,0){64} & ab
\\
\line(1,0){32}\line(0,1){6}\line(1,0){32}\line(0,1){6}\line(1,0){64} & (ab)c
&
\line(1,0){64}\line(0,1){6}\line(1,0){32}\line(0,1){6}\line(1,0){32} & a(bc)
\\
\line(1,0){16}\line(0,1){6}\line(1,0){16}\line(0,1){6}\line(1,0){32}\line(0,1){6}\line(1,0){64} & ((ab)c)d
&
\line(1,0){32}\line(0,1){6}\line(1,0){16}\line(0,1){6}\line(1,0){16}\line(0,1){6}\line(1,0){64} & (a(bc))d
\\
\line(1,0){32}\line(0,1){6}\line(1,0){32}\line(0,1){6}\line(1,0){32}\line(0,1){6}\line(1,0){32} & (ab)(cd)
&
\line(1,0){64}\line(0,1){6}\line(1,0){16}\line(0,1){6}\line(1,0){16}\line(0,1){6}\line(1,0){32} & a((bc)d)
\\
\line(1,0){64}\line(0,1){6}\line(1,0){32}\line(0,1){6}\line(1,0){16}\line(0,1){6}\line(1,0){16} & a(b(cd))
\\
\line(1,0){8}\line(0,1){6}\line(1,0){8}\line(0,1){6}\line(1,0){16}\line(0,1){6}\line(1,0){32}\line(0,1){6}\line(1,0){64}
 & (((ab)c)d)e
&
\line(1,0){16}\line(0,1){6}\line(1,0){8}\line(0,1){6}\line(1,0){8}\line(0,1){6}\line(1,0){32}\line(0,1){6}\line(1,0){64}
 & ((a(bc))d)e
\\
\line(1,0){16}\line(0,1){6}\line(1,0){16}\line(0,1){6}\line(1,0){16}\line(0,1){6}\line(1,0){16}\line(0,1){6}\line(1,0){64}
 & ((ab)(cd))e
&
\line(1,0){32}\line(0,1){6}\line(1,0){8}\line(0,1){6}\line(1,0){8}\line(0,1){6}\line(1,0){16}\line(0,1){6}\line(1,0){64}
 & (a((bc)d))e
\\
\line(1,0){32}\line(0,1){6}\line(1,0){16}\line(0,1){6}\line(1,0){8}\line(0,1){6}\line(1,0){8}\line(0,1){6}\line(1,0){64}
 & (a(b(cd)))e
&
\line(1,0){16}\line(0,1){6}\line(1,0){16}\line(0,1){6}\line(1,0){32}\line(0,1){6}\line(1,0){32}\line(0,1){6}\line(1,0){32}
 & ((ab)c)(de)
\\
\line(1,0){32}\line(0,1){6}\line(1,0){16}\line(0,1){6}\line(1,0){16}\line(0,1){6}\line(1,0){32}\line(0,1){6}\line(1,0){32}
 & (a(bc))(de)
&
\line(1,0){32}\line(0,1){6}\line(1,0){32}\line(0,1){6}\line(1,0){16}\line(0,1){6}\line(1,0){16}\line(0,1){6}\line(1,0){32}
 & (ab)((cd)e)
\\
\line(1,0){32}\line(0,1){6}\line(1,0){32}\line(0,1){6}\line(1,0){32}\line(0,1){6}\line(1,0){16}\line(0,1){6}\line(1,0){16}
 & (ab)(c(de))
&
\line(1,0){64}\line(0,1){6}\line(1,0){8}\line(0,1){6}\line(1,0){8}\line(0,1){6}\line(1,0){16}\line(0,1){6}\line(1,0){32}
 & a(((bc)d)e)
\\
\line(1,0){64}\line(0,1){6}\line(1,0){16}\line(0,1){6}\line(1,0){8}\line(0,1){6}\line(1,0){8}\line(0,1){6}\line(1,0){32}
 & a((b(cd))e)
&
\line(1,0){64}\line(0,1){6}\line(1,0){16}\line(0,1){6}\line(1,0){16}\line(0,1){6}\line(1,0){16}\line(0,1){6}\line(1,0){16}
 & a((bc)(de))
\\
\line(1,0){64}\line(0,1){6}\line(1,0){32}\line(0,1){6}\line(1,0){8}\line(0,1){6}\line(1,0){8}\line(0,1){6}\line(1,0){16}
 & a(b((cd)e))
&
\line(1,0){64}\line(0,1){6}\line(1,0){32}\line(0,1){6}\line(1,0){16}\line(0,1){6}\line(1,0){8}\line(0,1){6}\line(1,0){8}
 & a(b(c(de)))
\end{array}
\]}
\vspace{-5mm}
\caption{Tree sequences and association types}
\label{treesequences}
\end{figure}


\section{Preliminary results on commutativity relations}


\subsection{Lemmas on associativity and interchange}

For $n \ge 1$, the tree monomial basis of $\mathbf{Free}(n)$ is the set $\mathbb{B}_n$ of all complete
rooted binary plane trees with $n$ leaves, with internal nodes labelled $\wedgehor$ or $\wedgever$,
and leaves labelled by a permutation of $x_1, \dots, x_n$ (Definition \ref{freeoperad}).
For $\mathbf{Assoc}$, either we use equivalence classes (under double associativity)
of binary trees as basis monomials, or (more conveniently) we use the basis
$\mathbb{NB} = \{ x_1 \} \sqcup \mathbb{T}_\wedgehor \sqcup \mathbb{T}_\wedgever$
of labelled rooted (not necessarily binary) plane trees with \emph{alternating} labels
$\wedgehor$ and $\wedgever$ on internal nodes (Definitions \ref{assocboperad}, \ref{assocnboperad}).

\begin{lemma}
\label{assoclemma}
A basis for $\mathbf{Assoc}(n)$ is the set $\mathbb{NB}_n$ of all trees in $\mathbb{NB}$ with $n$ leaves.
\end{lemma}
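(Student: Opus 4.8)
The plan is to prove the lemma by realizing $\mathbb{NB}_n$ concretely inside $\mathbf{Free}(n)$ and showing that the resulting cosets form both a spanning set and a linearly independent set of $\mathbf{Assoc}(n)=\mathbf{Free}(n)/\mathrm{A}(n)$. (For $n=1$ there is nothing to prove, since $\mathrm{A}(1)=0$ and $\mathbb{NB}_1=\{x_1\}$; so assume $n\ge 2$.) Both halves flow from one device: orient the two relations \eqref{associativelaws} as the rewriting rules
\[
( a \wedgehor b ) \wedgehor c \;\longmapsto\; a \wedgehor ( b \wedgehor c ), \qquad ( a \wedgever b ) \wedgever c \;\longmapsto\; a \wedgever ( b \wedgever c ),
\]
applied to subtrees of binary tree monomials (carrying the leaf labels along, so this is rewriting in the multilinear/operadic sense). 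A binary tree is a \emph{normal form} exactly when no $\wedgehor$-labelled internal node has a $\wedgehor$-labelled left child, and likewise with $\wedgehor$ replaced by $\wedgever$; equivalently, each maximal monochromatic ``left comb'' has been straightened into a right comb. Collapsing each maximal right comb of $k$ same-labelled nodes to a single $(k{+}1)$-ary node of that label is a bijection from normal-form binary trees with $n$ leaves to the alternating nonbinary trees with $n$ leaves; this bijection is the map $\rho$ of Definition \ref{assocnboperad}, and it visibly preserves levels, so the even/odd-level labelling convention of $\mathbb{T}_\wedgehor$ and $\mathbb{T}_\wedgever$ is reproduced. Write $\widehat{T}$ for the normal-form binary tree corresponding to $T\in\mathbb{NB}_n$.

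For \textbf{spanning}, note that the rewriting system terminates: each rule application strictly decreases the classical weight $\sum_s \vert L(s)\vert$, the sum over internal nodes $s$ of the number of leaves below the left child of $s$. Hence every binary tree monomial reduces to a normal form, so every coset of $\mathbf{Assoc}(n)$ is a linear combination of the $\widehat{T}+\mathrm{A}$.

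For \textbf{linear independence} it suffices to show the normal form is unique, i.e.\ the system is confluent; by Newman's lemma (the diamond lemma for term rewriting, in its operadic incarnation for $\mathbb{S}$-modules) this reduces to resolving the critical pairs coming from overlapping left-hand sides. The key point is that the two left-hand sides $(a\wedgehor b)\wedgehor c$ and $(a\wedgever b)\wedgever c$ use \emph{different} operation symbols: a genuine overlap requires the left child of one redex to itself be a redex of the \emph{same} shape, which forces the two nodes to carry the same label, so there is no mixed critical pair. The remaining critical pairs are, for each operation separately, the Mac Lane pentagon overlap coming from $((a\star b)\star c)\star d$ (and its occurrences as a subtree), and these resolve exactly as in the proof that a single associative operation has the right-normed monomial basis. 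Confluence follows; normal forms are therefore unique, so distinct $T,T'\in\mathbb{NB}_n$ yield $\widehat{T},\widehat{T'}$ in distinct cosets of $\mathrm{A}$. Together with spanning, this shows $\mathbb{NB}_n$ is a basis of $\mathbf{Assoc}(n)$ and identifies $\rho$ with the indicated change of basis. (Alternatively, the statement is \cite[\S5]{LR2006} applied to the operad $\mathbf{2as}$.)

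The routine ingredient is the one-operation pentagon confluence. The step that needs care is keeping the whole argument \emph{operadic}: the objects being rewritten are tree monomials equipped with a bijection from the leaves to $x_1,\dots,x_n$, so one must either invoke the diamond lemma for (shuffle) operads or check directly that the leaf relabellings create no new overlaps — which they do not, since an overlap depends only on the underlying labelled tree shape — and one must verify that the collapsing bijection coincides on the nose with the level-parity description of $\mathbb{T}_\wedgehor$ and $\mathbb{T}_\wedgever$ in Definition \ref{assocnboperad}. I expect this bookkeeping, rather than any genuine difficulty, to be the main obstacle.
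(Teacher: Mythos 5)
Your argument is correct, and its combinatorial core coincides with the paper's: the paper's proof consists of defining the collapse map $\alpha$ that merges same-labelled adjacent internal nodes of a binary tree into a single nonbinary node (your ``collapse each maximal monochromatic comb''), and then asserting --- explicitly declining to write out the ``trivial but tedious'' details --- that $\alpha$ is surjective onto $\mathbb{NB}_n$ and that each fibre $\alpha^{-1}(U)$ is exactly one equivalence class modulo the consequences of $\HA$ and $\VA$. What you do differently is to actually prove that omitted step, by orienting the two associativity relations as rewrite rules, checking termination with the left-leaf weight, observing that the two rules cannot form a mixed critical pair (their left-hand sides have different root symbols) so that only the classical one-operation pentagon overlap remains, and invoking Newman's lemma to get unique normal forms (right combs), which are then in bijection with the alternating nonbinary trees. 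Your weight computation, critical-pair analysis, and the level-parity check for the collapsed trees are all sound, and the remark that leaf labels ride along without creating new overlaps correctly handles the symmetric-operad bookkeeping. So your proposal buys a self-contained, rigorous justification of precisely the claim the paper leaves to the reader, at the cost of importing the rewriting/diamond-lemma machinery; the paper's version is shorter because it treats the fibre statement as evident from the explicit local diagrams defining $\alpha$.
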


\begin{proof}
We give an algorithm for converting a tree $T \in \mathbb{B}_n$ into a tree $\alpha(T) \in \mathbb{NB}_n$.
We omit the (trivial but tedious) details of the proof that $\alpha$ is surjective,
and that for any tree $U \in \mathbb{NB}_n$, the inverse image $\alpha^{-1}(U) \subseteq \mathbb{B}_n$
consists of a single equivalence class for the congruence on $\mathbb{B}_n$ defined by
the consequences of the associativity relations $\HA$, $\VA$ of equation \eqref{associativelaws}.
We define $\alpha$ by the following diagrams, which indicate that for every $T \in \mathbb{B}_n$,
and \emph{every} internal node labelled $\wedgehor$, the subtree of $T$ with that node as root
is rewritten as indicated, obtaining a tree $\alpha(T) \in \mathbb{NB}_n$:
\[
\begin{array}{c@{\qquad\qquad}c}
\adjustbox{valign=m}{
\begin{xy}
(  6, 16 )*+{\wedgehor} = "root";
(  2,  8 )*+{\wedgehor} = "l";
( 10,  8 )*+{\wedgehor} = "r";
(  0,  0 )*+{T_1} = "t1";
(  4,  0 )*+{T_2} = "t2";
(  8,  0 )*+{T_3} = "t3";
( 12,  0 )*+{T_4} = "t4";
{ \ar@{-} "root"; "l" };
{ \ar@{-} "root"; "r" };
{ \ar@{-} "l"; "t1" };
{ \ar@{-} "l"; "t2" };
{ \ar@{-} "r"; "t3" };
{ \ar@{-} "r"; "t4" };
\end{xy}
}
\xrightarrow{\;\;\;\alpha\;\;\;}
\adjustbox{valign=m}{
\begin{xy}
(  6, 12 )*+{\wedgehor} = "root";
(  0,  0 )*+{T_1} = "t1";
(  4,  0 )*+{T_2} = "t2";
(  8,  0 )*+{T_3} = "t3";
( 12,  0 )*+{T_4} = "t4";
{ \ar@{-} "root"; "t1" };
{ \ar@{-} "root"; "t2" };
{ \ar@{-} "root"; "t3" };
{ \ar@{-} "root"; "t4" };
\end{xy}
}
&
\adjustbox{valign=m}{
\begin{xy}
(  6, 16 )*+{\wedgehor} = "root";
(  2,  8 )*+{\wedgehor} = "l";
( 10,  8 )*+{\wedgever} = "r";
(  0,  0 )*+{T_1} = "t1";
(  4,  0 )*+{T_2} = "t2";
(  8,  0 )*+{T_3} = "t3";
( 12,  0 )*+{T_4} = "t4";
{ \ar@{-} "root"; "l" };
{ \ar@{-} "root"; "r" };
{ \ar@{-} "l"; "t1" };
{ \ar@{-} "l"; "t2" };
{ \ar@{-} "r"; "t3" };
{ \ar@{-} "r"; "t4" };
\end{xy}
}
\xrightarrow{\;\;\;\alpha\;\;\;}
\adjustbox{valign=m}{
\begin{xy}
(  6, 16 )*+{\wedgehor} = "root";
( 10,  8 )*+{\wedgever} = "r";
(  0,  8 )*+{T_1} = "t1";
(  4,  8 )*+{T_2} = "t2";
(  8,  0 )*+{T_3} = "t3";
( 12,  0 )*+{T_4} = "t4";
{ \ar@{-} "root"; "t1" };
{ \ar@{-} "root"; "t2" };
{ \ar@{-} "root"; "r" };
{ \ar@{-} "r"; "t3" };
{ \ar@{-} "r"; "t4" };
\end{xy}
}
\\[9mm]
\adjustbox{valign=m}{
\begin{xy}
(  6, 16 )*+{\wedgehor} = "root";
(  2,  8 )*+{\wedgever} = "l";
( 10,  8 )*+{\wedgehor} = "r";
(  0,  0 )*+{T_1} = "t1";
(  4,  0 )*+{T_2} = "t2";
(  8,  0 )*+{T_3} = "t3";
( 12,  0 )*+{T_4} = "t4";
{ \ar@{-} "root"; "l" };
{ \ar@{-} "root"; "r" };
{ \ar@{-} "l"; "t1" };
{ \ar@{-} "l"; "t2" };
{ \ar@{-} "r"; "t3" };
{ \ar@{-} "r"; "t4" };
\end{xy}
}
\xrightarrow{\;\;\;\alpha\;\;\;}
\adjustbox{valign=m}{
\begin{xy}
(  6, 16 )*+{\wedgehor} = "root";
(  2,  8 )*+{\wedgever} = "l";
(  0,  0 )*+{T_1} = "t1";
(  4,  0 )*+{T_2} = "t2";
(  8,  8 )*+{T_3} = "t3";
( 12,  8 )*+{T_4} = "t4";
{ \ar@{-} "root"; "l" };
{ \ar@{-} "root"; "t3" };
{ \ar@{-} "root"; "t4" };
{ \ar@{-} "l"; "t1" };
{ \ar@{-} "l"; "t2" };
\end{xy}
}
&
\adjustbox{valign=m}{
\begin{xy}
(  6, 16 )*+{\wedgehor} = "root";
(  2,  8 )*+{\wedgever} = "l";
( 10,  8 )*+{\wedgever} = "r";
(  0,  0 )*+{T_1} = "t1";
(  4,  0 )*+{T_2} = "t2";
(  8,  0 )*+{T_3} = "t3";
( 12,  0 )*+{T_4} = "t4";
{ \ar@{-} "root"; "l" };
{ \ar@{-} "root"; "r" };
{ \ar@{-} "l"; "t1" };
{ \ar@{-} "l"; "t2" };
{ \ar@{-} "r"; "t3" };
{ \ar@{-} "r"; "t4" };
\end{xy}
}
\xrightarrow[\text{no change}]{\;\alpha\;}
\adjustbox{valign=m}{
\begin{xy}
(  6, 16 )*+{\wedgehor} = "root";
(  2,  8 )*+{\wedgever} = "l";
( 10,  8 )*+{\wedgever} = "r";
(  0,  0 )*+{T_1} = "t1";
(  4,  0 )*+{T_2} = "t2";
(  8,  0 )*+{T_3} = "t3";
( 12,  0 )*+{T_4} = "t4";
{ \ar@{-} "root"; "l" };
{ \ar@{-} "root"; "r" };
{ \ar@{-} "l"; "t1" };
{ \ar@{-} "l"; "t2" };
{ \ar@{-} "r"; "t3" };
{ \ar@{-} "r"; "t4" };
\end{xy}
}
\end{array}
\]
Switching $\wedgehor$ and $\wedgever$ throughout defines $\alpha$ for subtrees with roots
labelled $\wedgever$.
\end{proof}

Gu et al.~\cite{GLM2008} study various classes of binary trees whose internal nodes are labelled
white or black; however, none of their results coincides with our situation.
The Knuth rotation correspondence is similar but not identical to our bijection $\alpha$
between binary and nonbinary trees;
see Ebrahimi-Fard \& Manchon \cite[\S2]{EFM2014}.

For $n \ge 1$, consider the graph $G_n$ whose vertex set is $\mathbb{B}_n$;
the size of this set is the large Schr\"oder numbers (OEIS A006318):
1, 2, 6, 22, 90, 394, 1806, \dots.
In $G_n$ there is an edge joining tree monomials $v, w$ if and only if
$w$ may be obtained from $v$ by one application of the interchange law.
The number of \emph{isolated} vertices in $G_n$ \cite{BM2016} is the sequence (OEIS A078482)
1, 2, 6, 20, 70, 254, 948, \dots.
This is also the number of planar guillotine partitions of $I^2$ which avoid
a certain nondyadic block partition with four parts 
(equivalently, there is no way to apply the interchange law);
see Asinowski et al.~\cite[\S6.2, Remark 1]{ABBMP2013}.

\begin{notation}
For monomials $m_1, m_2 \in \mathbf{Free}(n)$ with $n \ge 4$, we write $m_1 \equiv m_2$
if and only if $m_1$ and $m_2$ can be obtained from the two sides of the interchange law
\eqref{intlaw} by the same sequence of partial compositions.
We write $m_1 \sim m_2$ if and only if $\Gamma( m_1 ) = \Gamma( m_2 )$,
where $\Gamma$ is the geometric realization map (Definition \ref{defgeomap}).
\end{notation}

\begin{lemma}
\label{nasrinslemma}
The equivalence relations $\sim$ and $\equiv$ coincide.
That is, $\sim$ is generated by the consequences in arity $n$ of the interchange law \eqref{intlaw}.
\end{lemma}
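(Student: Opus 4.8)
The plan is to prove the two inclusions $\equiv\ \subseteq\ \sim$ and $\sim\ \subseteq\ \equiv$ separately, the first being essentially formal and the second being the substantive content of the lemma. For $\equiv\ \subseteq\ \sim$: by definition, $m_1 \equiv m_2$ means $m_1$ and $m_2$ arise by the same sequence of partial compositions from the two sides of the interchange law \eqref{intlaw}. Since $\Gamma$ is a morphism of operads (Definition \ref{defgeomap}), it commutes with partial composition, so it suffices to check that the two sides of \eqref{intlaw} have the same image under $\Gamma$. But that is precisely the geometric content of Example \ref{exampleinterchange}: both $(a \wedgehor b)\wedgever(c \wedgehor d)$ and $(a \wedgever c)\wedgehor(b \wedgever d)$ realize to the block partition of $I^2$ into four congruent subsquares with $a,b,c,d$ in the four quadrants. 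Since $\sim$ is an operad congruence (it is the kernel congruence of the operad morphism $\Gamma$), applying the same partial compositions to equal images yields equal images, so $m_1 \sim m_2$. This direction also shows $\langle \boxplus \rangle \subseteq \ker(\Gamma)$, which is the easy half of Lemma \ref{lemma1} anyway.

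For the reverse inclusion $\sim\ \subseteq\ \equiv$, suppose $\Gamma(m_1) = \Gamma(m_2)$ for tree monomials $m_1, m_2 \in \mathbb{B}_n$. I want to show $m_1$ and $m_2$ are connected by a chain of single applications of the interchange law (in either direction). The natural strategy is induction on $n$, analyzing the common dyadic block partition $P = \Gamma(m_1) = \Gamma(m_2)$. The key structural observation is that any dyadic block partition $P$ with $n \geq 2$ parts admits at least one ``top-level'' cut: a maximal horizontal or vertical line segment that splits $I^2$ into two subrectangles, each of which is itself (a rescaling of) a dyadic block partition. A tree monomial $m$ with $\Gamma(m) = P$ records, at its root, a choice of \emph{one} such top-level cut (horizontal root $\leftrightarrow$ vertical cut line in the geometry, vertical root $\leftrightarrow$ horizontal cut line), together with recursively chosen tree monomials on the two sides. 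So the induction has two cases: (i) $m_1$ and $m_2$ choose the same top-level cut of $P$; then their left subtrees realize the same sub-partition and likewise on the right, and by the inductive hypothesis each pair of subtrees is connected by interchange moves, which lift via partial composition to interchange moves on $m_1, m_2$; (ii) $m_1$ and $m_2$ choose \emph{different} top-level cuts. Case (ii) is where the interchange law itself must be invoked, and it is the main obstacle.

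The heart of case (ii): if $P$ has two distinct top-level cuts, say one vertical and one horizontal (two parallel top-level cuts of the same orientation cannot both be ``top-level and maximal'' splitting all of $I^2$ unless... actually they can, e.g. three vertical strips — but then both choices are handled by associativity-style bookkeeping within $\sim$; more carefully, distinct top-level cuts of the \emph{same} orientation are comparable as nested splittings and the two root choices differ only by which cut one names first, a situation that requires a separate sub-argument). In the genuinely mixed case, a vertical top-level cut at $x = 1/2$ and a horizontal top-level cut at $y = 1/2$ together force $P$ to refine the four-quadrant partition, i.e. $P = \Gamma\big((A \wedgehor B)\wedgever(C \wedgehor D)\big) = \Gamma\big((A \wedgever C)\wedgehor(B \wedgever D)\big)$ for suitable sub-partitions $A,B,C,D$ (the four quadrant-restrictions of $P$). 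The tree monomial $m_1$ realizing the first bracketing and $m_2$ realizing the second are related by \emph{one} application of the interchange law, post-composed with the four partial compositions inserting (representatives of) $A, B, C, D$. For an arbitrary $m_1, m_2$ in case (ii), one first uses case (i) / the inductive hypothesis to connect $m_1$ to the canonical $(A \wedgehor B)\wedgever(C \wedgehor D)$-shaped monomial (same root choice), then applies interchange once, then connects to $m_2$; the technical work is checking that ``having a vertical top-level cut and a horizontal top-level cut'' is equivalent to ``$P$ refines the four-quadrant grid'', and that the quadrant sub-partitions are themselves well-defined dyadic block partitions of strictly smaller arity so the induction applies. I expect the bookkeeping around multiple parallel top-level cuts (reducing the ambiguity in the root choice to moves already inside $\sim$, and not needing $\boxplus$ for those) to be the fussiest part, and the mixed-cut step to be the conceptually essential one; both are finite, combinatorial, and amenable to the tree-versus-partition dictionary already set up in Figure \ref{bigpicture}.
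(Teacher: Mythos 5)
Your proposal is correct and follows essentially the same route as the paper: induction on arity, with the case split governed by whether the common dyadic partition has one or both main cuts, and a single root-level application of the interchange law (after normalizing the subtrees by the inductive hypothesis) handling the mixed case. Your worry about two parallel top-level cuts of the same orientation is vacuous: by the definition of $\Gamma$, the root of a tree monomial always corresponds to the \emph{exact bisection} of $I^2$ in the given direction, which is unique, so the only ambiguity in the root choice is horizontal versus vertical — precisely your case (ii).
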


\begin{proof}
For $n = 1,2,3$, the map $\Gamma$ is injective, so there is nothing to prove.
Now suppose that $n \ge 4$ and that $m_1, m_2 \in \mathbf{Free}(n)$ satisfy $m_1 \sim m_2$;
thus for some dyadic block partition $P \in \mathbf{DBP}(n)$ we have
$m_1, m_2 \in \Gamma^{-1}(P)$.

For $n = 4$, the dihedral group of symmetries of the square acts on the basis of 40 tree monomials;
the generators are replacing $\wedgehor$ (resp.~$\wedgever$) by the opposite operation and
transposing the operations.
In the following argument, we omit the permutations of the indeterminates,
but the reasoning remains valid for a symmetric operad.
There are nine orbits, of sizes two (twice), four (five times), eight (twice).
For each orbit, we choose an orbit representative and display
its image under $\Gamma$ in Figure \ref{rectangularpartitions}.
The dihedral group also acts in the obvious way on these nine dyadic block partitions.
In every case except the first, the size of the orbit generated by the block partition
equals the size of the orbit generated by the tree monomial.
The first block partition $\boxplus$ is fixed by all 8 symmetrices of the square,
and the two monomials in $\Gamma^{-1}(\boxplus)$ are the two terms of the interchange law
\eqref{intlaw}.
This is the only failure of injectivity in arity 4.

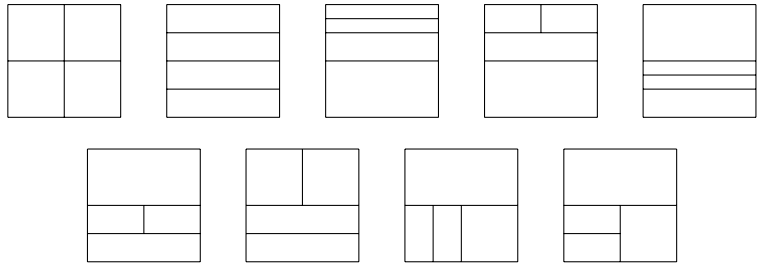
\begin{figure}[ht]
\footnotesize
\[
\begin{array}{c}
\begin{tikzpicture}
\draw
(0,0) -- (1.5,0)
(0,1.5) -- (1.5,1.5)
(0,0.75) -- (1.5,0.75)
(0,0) -- (0,1.5)
(0.75,0) -- (0.75,1.5)
(1.5,0) -- (1.5,1.5);
\end{tikzpicture}
\qquad
\begin{tikzpicture}
\draw
(0,0) -- (1.5,0)
(0,1.5) -- (1.5,1.5)
(0,1.125) -- (1.5,1.125)
(0,0.75) -- (1.5,0.75)
(0,0.375) -- (1.5,0.375)
(0,0) -- (0,1.5)
(1.5,0) -- (1.5,1.5);
\end{tikzpicture}
\qquad
\begin{tikzpicture}
\draw
(0,0) -- (1.5,0)
(0,1.5) -- (1.5,1.5)
(0,1.3125) -- (1.5,1.3125)
(0,1.125) -- (1.5,1.125)
(0,0.75) -- (1.5,0.75)
(0,0) -- (0,1.5)
(1.5,0) -- (1.5,1.5);
\end{tikzpicture}
\qquad
\begin{tikzpicture}
\draw
(0,0) -- (1.5,0)
(0,1.5) -- (1.5,1.5)
(0,1.125) -- (1.5,1.125)
(0,0.75) -- (1.5,0.75)
(0,0) -- (0,1.5)
(0.75,1.125) -- (0.75,1.5)
(1.5,0) -- (1.5,1.5);
\end{tikzpicture}
\qquad
\begin{tikzpicture}
\draw
(0,0) -- (1.5,0)
(0,0.375) -- (1.5,0.375)
(0,0.5625) -- (1.5,0.5625)
(0,0.75) -- (1.5,0.75)
(0,1.5) -- (1.5,1.5)
(0,0) -- (0,1.5)
(1.5,0) -- (1.5,1.5);
\end{tikzpicture}
\\[3mm]
\begin{tikzpicture}
\draw
(0,0) -- (1.5,0)
(0,0.375) -- (1.5,0.375)
(0,0.75) -- (1.5,0.75)
(0,1.5) -- (1.5,1.5)
(0,0) -- (0,1.5)
(0.75,0.375) -- (0.75,0.75)
(1.5,0) -- (1.5,1.5);
\end{tikzpicture}
\qquad
\begin{tikzpicture}
\draw
(0,0) -- (1.5,0)
(0,0.375) -- (1.5,0.375)
(0,0.75) -- (1.5,0.75)
(0,1.5) -- (1.5,1.5)
(0,0) -- (0,1.5)
(0.75,0.75) -- (0.75,1.5)
(1.5,0) -- (1.5,1.5);
\end{tikzpicture}
\qquad
\begin{tikzpicture}
\draw
(0,0) -- (1.5,0)
(0,1.5) -- (1.5,1.5)
(0,0.75) -- (1.5,0.75)
(0,0) -- (0,1.5)
(0.75,0) -- (0.75,0.75)
(0.375,0) -- (0.375,0.75)
(1.5,0) -- (1.5,1.5);
\end{tikzpicture}
\qquad
\begin{tikzpicture}
\draw
(0,0) -- (1.5,0)
(0,1.5) -- (1.5,1.5)
(0,0.75) -- (1.5,0.75)
(0,0.375) -- (0.75,0.375)
(0,0) -- (0,1.5)
(0.75,0) -- (0.75,0.75)
(1.5,0) -- (1.5,1.5);
\end{tikzpicture}
\end{array}
\]
\vspace{-5mm}
\caption{Orbit representatives for dihedral group in arity 4}
\label{rectangularpartitions}
\end{figure}

Assume that $n \ge 5$ and that $\sim$ and $\equiv$ coincide on $\mathbf{Free}(k)$ for $k < n$.
Clearly any monomial $m \in \mathbf{Free}(k)$ has the form $m = m_1 \ast m_2$ for some
$m_1 \in \mathbf{Free}(k_1)$, $m_2 \in \mathbf{Free}(k_2)$ where
$k_1, k_2 < n$, $k_1 + k_2 = n$, and $\ast \in \{ \wedgehor, \wedgever \}$.
Consider a dyadic block partition $P \in \mathbf{DBP}(n)$.
There are three cases:

\emph{Case 1}.
Assume $P$ contains the horizontal bisection of $I^2$, but the two resulting parts
do not both have vertical bisections.
Let $x, y \in \mathbf{Free}(n)$ be tree monomials in $\Gamma^{-1}(P)$, so $x \sim y$.
By assumption, we have $x = x_1 \wedgever x_2$ where $x_i = x'_i \wedgever x''_i$
for at most one $i \in \{1,2\}$; and the same for $y$.
Since $\Gamma$ is an operad morphism, it follows from
$\Gamma( x_1 \wedgever x_2 ) = \Gamma( y_1 \wedgever y_2 )$ that
$\Gamma( x_1 ) \uparrow \Gamma( x_2 ) = \Gamma( y_1 ) \uparrow \Gamma( y_2 )$.
It is geometrically clear that $\Gamma( x_i ) = \Gamma( y_i )$ for $i \in \{1,2\}$,
and this implies that $x_i$ and $y_i$ have the same arity $k_i$.
Hence $x_i \sim y_i$, and by induction $x_i \equiv y_i$.
Therefore $x \equiv y$.

\emph{Case 2}.
Assume $P$ contains the vertical bisection of $I^2$, but the two resulting parts
do not both have horizontal bisections.
The argument is the same as Case 1 with $\wedgehor$ and $\wedgever$ transposed;
this leaves the interchange law \eqref{intlaw} unchanged.

\emph{Case 3}.
Assume $P$ contains both horizontal and vertical bisections of $I^2$.
In addition to the possibilities in Cases 1 and 2,
there are two different factorizations for each monomial $x, y \in \Gamma^{-1}(P)$
into products of four factors.
Using both algebraic and geometric notation, we have:
\begin{align*}
&
x =
x_1 \wedgever x_2 =
( z_1 \wedgehor z_2 ) \wedgever ( z_3 \wedgehor z_4 )
\stackrel{\boxplus}{=}
( z_1 \wedgever z_3 ) \wedgehor ( z_2 \wedgever z_4 ) =
y_1 \wedgehor y_2 =
y,
\\
&
\Gamma(x)
=
\adjustbox{valign=m}
{\begin{xy}
( 15,  5 )*+{\Gamma(x_1)};
( 15, 15 )*+{\Gamma(x_2)};
( 10,  0 ) = "1";
( 10, 10 ) = "2";
( 10, 20 ) = "3";
( 20,  0 ) = "4";
( 20, 10 ) = "5";
( 20, 20 ) = "6";
{ \ar@{-} "1"; "3" };
{ \ar@{-} "4"; "6" };
{ \ar@{-} "1"; "4" };
{ \ar@{-} "2"; "5" };
{ \ar@{-} "3"; "6" };
\end{xy}}
=
\adjustbox{valign=m}
{\begin{xy}
(  5,  5 )*+{\Gamma(z_1)};
(  5, 15 )*+{\Gamma(z_3)};
( 15,  5 )*+{\Gamma(z_2)};
( 15, 15 )*+{\Gamma(z_4)};
(  0,  0 ) = "1";
(  0, 10 ) = "2";
(  0, 20 ) = "3";
( 10,  0 ) = "4";
( 10, 10 ) = "5";
( 10, 20 ) = "6";
( 20,  0 ) = "7";
( 20, 10 ) = "8";
( 20, 20 ) = "9";
{ \ar@{-} "1"; "3" };
{ \ar@{-} "4"; "6" };
{ \ar@{-} "7"; "9" };
{ \ar@{-} "1"; "7" };
{ \ar@{-} "2"; "8" };
{ \ar@{-} "3"; "9" };
\end{xy}}
=
\adjustbox{valign=m}
{\begin{xy}
(  5,  5 )*+{\Gamma(y_1)};
( 15,  5 )*+{\Gamma(y_2)};
(  0,  0 ) = "1";
(  0, 10 ) = "2";
( 10,  0 ) = "3";
( 10, 10 ) = "4";
( 20,  0 ) = "5";
( 20, 10 ) = "6";
{ \ar@{-} "1"; "2" };
{ \ar@{-} "3"; "4" };
{ \ar@{-} "5"; "6" };
{ \ar@{-} "1"; "5" };
{ \ar@{-} "2"; "6" };
\end{xy}}
=
\Gamma(y).
\end{align*}
If $x \sim y$ then either
(i)
the claim follows from the equivalence of factors in lower arity as in Cases 1 and 2,
or
(ii)
the claim follows from an application of the interchange law in arity $n$ as indicated in
the last two equations.
\end{proof}

For a generalization of Lemma \ref{nasrinslemma} to $d \ge 2$ nonassociative operations, see \cite{BD2017}.


\subsection{Cuts and slices}

Recall the notions of empty blocks and subrectangles in a block partition from Definitions
\ref{bpoperad} and \ref{defsubrectangle}.

\begin{definition}
Let $P$ be a block partition of $I^2$ and $R$ a subrectangle of $P$.
By a \emph{main cut} in $R$ we mean a horizontal or vertical bisection of $R$.
Every subrectangle has at most two main cuts; the empty block is the only block partition with no main cut.
Suppose that a main cut partitions $R$ into subrectangles $R_1$ and $R_2$.
If either $R_1$ or $R_2$ has a main cut parallel to the main cut of $R$,
this is called a \emph{primary cut} in $R$.
This definition extends as follows:
if the subrectangle $S$ of $R$ is one of the subrectangles obtained from a sequence of cuts
all of which are parallel to a main cut of $R$ then a main cut of $S$ is a primary cut of $R$.
In a given direction, we include the main cut of $R$ as a primary cut.
Let $C_1, \dots, C_\ell$ be all the primary cuts of $R$ parallel to a given main cut $C_i$ of $R$
($1 \le i \le \ell$) in their natural order (bottom to top, or left to right)
so that there is no primary cut between $C_j$ and $C_{j+1}$ for $1 \le j \le \ell{-}1$.
Define the artificial ``cuts'' $C_0$ and $C_{\ell+1}$ to be the bottom and top
(or left and right) sides of $R$.
We write $S_j$ for the $j$-th \emph{slice} of $R$ parallel to the given main cut; that is,
the subrectangle between $C_{j-1}$ and $C_j$ for $1 \le j \le \ell{+}1$.
\end{definition}

\begin{definition}
Let $m$ be a monomial of arity $n$ in the operad $\mathbf{Free}$.
We say that $m$ \emph{admits a commutativity relation} if for some transposition $(ij) \in S_n$ ($i < j$),
the following relation holds for the corresponding cosets in $\mathbf{DIA}$:
\[
m(x_1,\dots,x_i,\dots,x_j,\dots,x_n) \equiv m(x_1,\dots,x_j,\dots,x_i,\dots,x_n).
\]
\end{definition}

We emphasize (referring to the commutative diagram of Figure \ref{bigpicture}) that the proof of
a commutativity property for the monomial $m$ consists of a sequence of applications of associativity and
the interchange law starting from $m$ and ending with the same pattern of parentheses and operations 
but with a different permutation.

\begin{proposition}
\label{twomaincuts}
Let $m$ be a tree monomial in $\mathbf{Free}$ which admits a commutativity relation.
Assume that this commutativity relation is not the result of operad partial composition with
a commutativity relation of lower arity, either from
(i) a commutativity relation holding in a proper factor of $m$, or
(ii) a commutativity relation holding in a proper quotient of $m$, by which we mean substitution of
the same decomposable factor for the same indecomposable argument in both sides of a commutativity relation
of lower arity.
If $P = \Gamma(m)$ is the corresponding dyadic block partition of $I^2$ then $P$ contains both of
the main cuts (horizontal and vertical); that is, it must be possible to apply the interchange law
as a rewrite rule at the root of the tree monomial $m$.
\end{proposition}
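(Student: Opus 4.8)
The plan is to argue by contradiction: assume $m$ admits a commutativity relation that is not a partial composite of a lower-arity one in either sense (i) or (ii), but $P = \Gamma(m)$ fails to contain both main cuts. The dihedral group of the square acts on $\mathbf{Free}(n)$, commutes with $\Gamma$, and carries the interchange law to itself (this is the symmetry used in the proof of Lemma \ref{nasrinslemma}); applying a suitable element I may assume that $P$ contains the horizontal bisection but not the vertical one. Containing the horizontal bisection means the nonbinary normal form of $m$ (Definition \ref{assocnboperad}) has root $\wedgever$, so $P$ is a vertical stack of subrectangles realising a product $m_1 \wedgever m_2$ with $\Gamma(m_1), \Gamma(m_2)$ its top and bottom halves; not containing the vertical bisection means one of these halves, say the top after an up--down reflection, is not split by the line $x = \tfrac12$, i.e.\ $m_1$ is a single argument or its nonbinary normal form has root $\wedgever$. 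In particular $P$ has no full-height vertical cut at all.

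The first real step is a slicing invariant. A commutativity relation transposing $x_i$ and $x_j$ is, by the diagram chasing of \S\ref{diagramchasing}, a finite sequence of rewrites in $\mathbf{Free}$ -- applications of an associative law or of the interchange law -- from $m$ with the identity labelling to $m$ with $x_i, x_j$ transposed. Along this sequence I would follow the decomposition of the dyadic block partition $\Gamma(m^{(a)})$ into its maximal horizontal slices $S^{(a)}_1,\dots,S^{(a)}_r$ (here $r \ge 2$ since the root operation is $\wedgever$). I claim the combinatorial type of this slicing, and in particular the function assigning to each argument the index of the slice containing it, is preserved by every rewrite step: an interchange step does not change $\Gamma$ at all (the interchange law generates $\ker \Gamma$, Lemma \ref{lemma1}), while an associative step only slides cuts or regroups/rescales along the top-level $\wedgever$-spine or inside a single slice, and hence never transports an argument across a full-width horizontal cut. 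Consequently $x_i$ and $x_j$ lie in one common slice $S$ of $P$; since $S$ contains two distinct arguments, $S$ is not a single empty block, so $S$ is itself split by a vertical cut.

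The substantive part is to force decomposability from this. I would split on whether the manoeuvre ever uses an interchange step straddling the boundary of $S$ with a neighbouring slice. If it does not, the whole manoeuvre is confined to the subtree realising $S$, so the relation is a partial composite of the corresponding relation in the proper factor $S$ -- case (i). If it does, I would argue that, because $P$ has no full-height vertical cut, any such straddling step requires a $2 \times 2$ subrectangle with one row in $S$ and one in the adjacent slice, which forces the two slices to carry a common vertical cut at the straddled position; iterating along the chain of straddling steps, the manoeuvre is confined to the union $R$ of a contiguous run of slices, and $R$ must be a \emph{proper} subrectangle of $P$ -- for $R = P$ the accumulated matching-cut constraints would give $P$ a full-height vertical bisection, hence both main cuts, contrary to assumption. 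The relation then localises to the proper factor $R$ (case (i)), unless the run $R$ meets a degenerate slice consisting of a single empty block that serves only as scratch space, in which case the manoeuvre factors through replacing that slice by a single indecomposable argument and the relation is a partial composite in sense (ii). In every branch we contradict the hypothesis.

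I expect the crux to be the control of the straddling chain: showing precisely that matching $2 \times 2$ windows propagate a common vertical cut along the chain, and that a manoeuvre exhausting all slices forces the vertical main cut; everything before that is bookkeeping about how associativity and interchange act on the slice decomposition. I would organise that step by induction on the arity $n$, peeling off the outermost slice and applying the inductive hypothesis to the complementary subrectangle, using the hypotheses excluding (i) and (ii) to discard the degenerate sub-configurations at the base of the induction.
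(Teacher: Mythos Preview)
Your strategy diverges substantially from the paper's, and the divergence exposes a real gap.

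The linchpin of your argument is the invariance claim: that the assignment of arguments to maximal horizontal slices of $\Gamma(m^{(a)})$ is preserved by every rewrite. This is not true in either of the two natural readings. If ``slice'' means the pieces cut out by \emph{all} full-width horizontal cuts of $\Gamma$, then associativity inside a slice can move an argument across such a cut: take $c = ((a \wedgever b)\wedgever c)\wedgehor(e \wedgever f)$ and reassociate the left factor to $a \wedgever (b \wedgever c)$; the unique full-width horizontal cut of $\Gamma(c)$ stays at height $\tfrac12$, but $b$ jumps from the lower band to the upper one. If instead ``slice'' means the children of the nonbinary $\wedgever$-root, then interchange does change the slicing: applying interchange to two adjacent children $c_k, c_{k+1}$ (both with $\wedgehor$-root) merges them into a single child, and the reverse move splits one child into two. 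So neither notion gives an invariant, and the conclusion that the two commuting arguments lie in a single fixed slice does not follow. Your subsequent ``straddling'' analysis is built on this invariant and inherits the problem; the assertion that matching $2\times 2$ windows must propagate a common vertical cut through every slice, forcing the vertical main cut if $R=P$, is not justified and is in fact blocked precisely by the configurations the paper isolates.

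The paper's proof avoids all of this with a much smaller invariant. Working with root $\wedgehor$ (your root $\wedgever$ transposed), if the left half $P_1$ lacks the horizontal main cut, then among its maximal vertical slices at least one has \emph{no} main cut at all, hence is a single empty block corresponding to a leaf $x_j$. Thus the nonbinary $\wedgehor$-root has a leaf child $x_j$. That property \emph{is} invariant: every node on the path from the root to $x_j$ carries $\wedgehor$, and an interchange at any such node would require both binary children to have $\wedgever$-root, impossible since one of them is either the leaf $x_j$ or a $\wedgehor$-node. Hence $x_j$ stays a root leaf-child through the entire rewriting, and the commutativity relation descends to a proper factor or quotient. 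The key step you are missing is this existence of an empty-block slice whenever one half lacks the perpendicular main cut; once you have that single leaf as anchor, there is no need to track a whole slice decomposition.
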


\begin{proof}
Any dyadic block partition $P$ of $I^2$ has at least one main cut, corresponding to the root of the
tree monomial $m$ for which $P = \Gamma(m)$.
Transposing the $x$ and $y$ axes if necessary (this corresponds to switching the horizontal and
vertical operation symbols in the monomial $m$), we may assume that $P$ contains the vertical main cut,
corresponding to the operation $\wedgehor$ in the monomial $m = m_1 \wedgehor m_2$:
\[
P = \begin{array}{|c|c|} \midrule P_1 & P_2 \\ \midrule \end{array}
\]
Let $P_1 = \Gamma(m_1)$ and $P_2 = \Gamma(m_2)$ be the dyadic block partitions of $I^2$ corresponding
to $m_1$ and $m_2$.
If both $P_1$ and $P_2$ have the horizontal main cut, then we are done, since these two cuts combine
to produce the horizontal main cut for $P$:
\[
P = \begin{array}{|c|c|} \midrule P''_1 & P''_2 \\ \midrule P'_1 & P'_2 \\ \midrule \end{array}
\]
Otherwise, at most one of $P_1$ and $P_2$ has the horizontal main cut.
Reflecting in the vertical line $x = \tfrac12$ if necessary (this corresponds to replacing
the horizontal operation $\wedgehor$ by its opposite throughout tree monomial $m$), we may assume that
$P_1$ does \emph{not} have the horizontal main cut.
Then either $P_1$ has the vertical main cut, or $P_1$ has no main cut (so that $P_1$ is an empty block):
\[
P = \begin{array}{|c|c|c|} \midrule P'_1 & P''_1 & P_2 \\ \midrule \end{array}
\qquad \text{or} \qquad
P = \begin{array}{|c|c|} \midrule P_1 & P_2 \\ \midrule \end{array}
\]
In either case, $P_1$ is the union of $k \ge 1$ consecutive vertical slices $S_1, \dots, S_k$
from left to right, where we assume that $k$ is as large as possible so that the slices are as
thin as possible.
It follows that each of these vertical slices either is an empty block or has only one main cut
which is horizontal:
\[
P = \begin{array}{|c|c|c|c|} \midrule S_1 & \cdots & S_k & P_2 \\ \midrule \end{array}
\]
Therefore, in the monomial $m_1$ for which $P_1 = \Gamma( m_1 )$, each of these vertical slices
corresponds either to an indecomposable indeterminate $x_j$ or a decomposable element $t \wedgever u$
whose root operation is the vertical operation (corresponding to the horizontal main cut).
By assumption, $P_1$ does not have the horizontal main cut, and so at least one of the vertical slices
$S_j$ does not have the horizontal main cut; we choose $j$ to be as small as possible, thereby selecting
the leftmost vertical slice without the horizontal main cut:
\[
P = \begin{array}{|c|c|c|c|c|c|} \midrule S_1 & \cdots & x_j & \cdots & S_k & P_2 \\ \midrule \end{array}
\]
By maximality of the choice of $k$, the vertical slice $S_j$ does not have the vertical main cut either.
Thus $S_j$ has no main cut, and hence $S_j$ is the empty block, and so in the monomial $m_1$,
the vertical slice $S_j$ corresponds to an indecomposable indeterminate $x_j$.
Therefore $m$ must have the following form, where $v$ and/or $w$ may be absent
(that is, $v \wedgehor x_j \wedgehor w$ may be $x_j \wedgehor w$ or $v \wedgehor x_j$ or simply $x_j$):
\begin{equation}
\label{vwequation}
m = m_1 \wedgehor m_2 = v \wedgehor x_j \wedgehor w \wedgehor m_2.
\end{equation}
(We may omit parentheses since the operation $\wedgehor$ is associative.)

If both $v$ and $w$ are absent, then $m_1 = x_j$ and so $m = x_j \wedgehor m_2$.
In this case, it is clear that the only way in which the interchange law can be applied as a rewrite rule
to $m$ is within the submonomial $m_2$.
But this implies that any commutativity relation which holds for $m$ is a consequence of
a commutativity relation for $m_2$, contradicting our assumption.

If $x_j$ is not the only argument in $m_1$ then there is at least one factor $v$ or $w$
on the left or right side of $x_j$ in equation \eqref{vwequation}.
We want to be able to apply the interchange law as a rewrite rule in a way which involves all of $m$;
otherwise, any commutativity relation which holds for $m$ must be a consequence of
a commutativity relation for a proper submonomial, contradicting our assumption.
Let us write the monomial \eqref{vwequation} as a tree monomial; it has the form
\begin{equation}
\label{vwtree1}
\adjustbox{valign=m}{
\begin{xy}
( 12, 12 )*+{\wedgehor} = "root";
(  0,  0 )*+{\boxed{v}} = "t1";
(  8,  0 )*+{x_j} = "t2";
( 16,  0 )*+{\boxed{w}} = "t3";
( 24,  0 )*+{\boxed{m_2}} = "t4";
{ \ar@{-} "root"; "t1" };
{ \ar@{-} "root"; "t2" };
{ \ar@{-} "root"; "t3" };
{ \ar@{-} "root"; "t4" };
\end{xy}
}
\end{equation}
We can apply the interchange law to this tree only in one of the following ways:
within $v$, within $w$, within $m_2$, or (if both $w$ and $m_2$ have $\wedgever$ at the root)
using the root of \eqref{vwtree1} with $w$ and $m_2$.
In the last case, we first rewrite \eqref{vwtree1} as follows:
\begin{equation}
\label{vwtree2}
\adjustbox{valign=m}{
\begin{xy}
( 12, 16 )*+{\wedgehor} = "root";
( 20,  8 )*+{\wedgehor} = "r";
(  0,  8 )*+{\boxed{v}} = "t1";
(  8,  8 )*+{x_j} = "t2";
( 16,  0 )*+{\boxed{w}} = "t3";
( 24,  0 )*+{\boxed{m_2}} = "t4";
{ \ar@{-} "root"; "t1" };
{ \ar@{-} "root"; "t2" };
{ \ar@{-} "root"; "r" };
{ \ar@{-} "r"; "t3" };
{ \ar@{-} "r"; "t4" };
\end{xy}
}
\end{equation}
After applying the interchange law, we obtain a tree of the following form:
\begin{equation}
\label{vwtree3}
\adjustbox{valign=m}{
\begin{xy}
( 12, 16 )*+{\wedgehor} = "root";
( 20,  8 )*+{\wedgever} = "r";
(  0,  8 )*+{\boxed{v}} = "t1";
(  8,  8 )*+{x_j} = "t2";
( 16,  0 )*+{\boxed{w'}} = "t3";
( 24,  0 )*+{\boxed{m'_2}} = "t4";
{ \ar@{-} "root"; "t1" };
{ \ar@{-} "root"; "t2" };
{ \ar@{-} "root"; "r" };
{ \ar@{-} "r"; "t3" };
{ \ar@{-} "r"; "t4" };
\end{xy}
}
\end{equation}
Thus, no matter how we apply the interchange law to \eqref{vwtree1}, the fact that $x_j$
is a child of the root $\wedgehor$ remains unchanged.
Hence any commutativity relation for $m$ must be a consequence of a commutativity relation
of lower arity (either as factor or as quotient), contradicting our original assumption.
Therefore both $P_1$ and $P_2$ have the horizontal main cut, and hence $P$ has both main cuts.
\end{proof}

\subsection{Border blocks and interior blocks}

\begin{definition}
Let $m$ be a (tree) monomial in the operad $\mathbf{Free}$ which admits a commutativity relation
transposing the indeterminates $x_i$ and $x_j$ ($i \ne j$).
If $B = \Gamma(m)$ is the labelled block partition of $I^2$ corresponding to $m$ then $B$ is called
a \emph{commutative block partition}.
The two empty blocks corresponding to $x_i$ and $x_j$ are called \emph{commuting empty blocks}.
\end{definition}

\begin{definition}
Let $B$ be a block partition of $I^2$ consisting of the empty blocks $R_1$, $\dots$, $R_k$.
If the closure of $R_i$ has empty intersection with the four sides of the closure $\overline{I^2}$ then
$R_i$ is an \emph{interior block}, otherwise $R_i$ is a \emph{border block}.
\end{definition}

\begin{lemma}
\label{borderlemma}
Suppose that $B_1 = \Gamma(m_1)$ and $B_2 = \Gamma(m_2)$ are two labelled dyadic block partitions of $I^2$
such that $m_1 \equiv m_2$ in every double interchange semigroup; hence this equivalence must be the result
of applying associativity and the interchange law.
(This is more general than a commutativity relation for a dyadic block partition.)
Then any interior (respectively border) block of $B_1$ remains an interior (respectively border) block in $B_2$.
\end{lemma}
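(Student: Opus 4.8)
The plan is to induct on the length of a sequence of elementary rewrites that transforms $m_1$ into $m_2$. Saying that $m_1 \equiv m_2$ in every double interchange semigroup means precisely that $m_1$ and $m_2$ represent the same coset of $\mathbf{Free}$ modulo the ideal $\langle \HA, \VA, \boxplus \rangle$, so there is a finite chain $m_1 = n_0, n_1, \ldots, n_N = m_2$ of tree monomials of $\mathbf{Free}$ in which each $n_{k+1}$ is obtained from $n_k$ by a single application, in either direction and at a single internal node, of one of the relations $\HA$, $\VA$ of \eqref{associativelaws} or $\boxplus$ of \eqref{intlaw}. Every such step preserves the leaf labels of the tree, hence induces a bijection between the empty blocks of $\Gamma(n_k)$ and those of $\Gamma(n_{k+1})$ (match the block labelled $x_i$ to the block labelled $x_i$). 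I will prove a statement slightly stronger than the lemma, and one that composes transitively along the chain: under this bijection, an empty block of $\Gamma(n_k)$ and its partner in $\Gamma(n_{k+1})$ have closures meeting exactly the same subset of the four sides of $\overline{I^2}$. The lemma is then the special case in which this subset is empty (interior block) or nonempty (border block).

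It therefore suffices to treat a single elementary rewrite $m \mapsto m'$. If the rewrite is an application of $\boxplus$, then by Lemma \ref{lemma1} (equivalently Lemma \ref{nasrinslemma}) the relation $\boxplus$ generates $\ker(\Gamma)$, and since $\Gamma$ is a morphism of operads we get $\Gamma(m) = \Gamma(m')$ as labelled block partitions; there is nothing to prove. Otherwise the rewrite is an application of $\HA$ or $\VA$; transposing the two coordinate axes if necessary — which swaps $\wedgehor$ with $\wedgever$, swaps horizontal cuts with vertical ones, and leaves the notions ``border block'' and ``set of incident sides'' intact up to the induced permutation of the four sides — we may assume it is an application of $\HA$, realized locally at some internal node $\nu$ as $((T_1 \wedgehor T_2) \wedgehor T_3) \leftrightarrow (T_1 \wedgehor (T_2 \wedgehor T_3))$, with the subtrees $T_1, T_2, T_3$ and the entire part of the tree on the path from the root down to $\nu$ left unchanged.

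Next I would localize the rewrite. By the recursive formula for $\Gamma$ in Definition \ref{defgeomap}, the subrectangle $R \subseteq I^2$ occupied by the subtree rooted at $\nu$ depends only on the sequence of operation symbols and left/right choices along the root-to-$\nu$ path — never on the arities of subtrees, since each binary operation halves its ambient rectangle regardless of operand sizes — so $R$ is the same rectangle in $\Gamma(m)$ and in $\Gamma(m')$, and the two partitions agree outside $R$. Inside $R$, both bracketings cut only by vertical lines, producing the full-height strips $\Gamma(T_1), \Gamma(T_2), \Gamma(T_3)$ in this left-to-right order; the sole difference between $m$ and $m'$ is that the strips have different widths. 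Two consequences follow: (i) for each $i$, the set of sides of $R$ met by $\overline{\Gamma(T_i)}$ is the same in $\Gamma(m)$ and $\Gamma(m')$ — always top and bottom, the left side iff $i = 1$, the right side iff $i = 3$; and (ii) the partition induced inside each strip $\Gamma(T_i)$ is the same for $m$ and $m'$ up to a horizontal rescaling of the strip, and a horizontal rescaling is a homeomorphism of closed rectangles carrying each side onto the corresponding side, hence preserves, for every empty block inside $\Gamma(T_i)$, the set of sides of $\Gamma(T_i)$ that its closure meets.

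Finally I would assemble these facts one block at a time. Let $Q$ be an empty block of $\Gamma(m)$ and $Q'$ its partner in $\Gamma(m')$. If $Q \not\subseteq R$ then $Q = Q'$ as subsets of $I^2$, and there is nothing to check. Otherwise $Q$ lies in a unique strip $\Gamma(T_i)$; since $\overline{Q} \subseteq \overline{R}$ and $\overline{R}$ meets a side $E$ of $\overline{I^2}$ (when at all) in a full side of $R$, the closure $\overline{Q}$ meets $E$ if and only if it meets the corresponding side of $R$, and the latter holds precisely when that side of $R$ is among the sides of $R$ met by $\overline{\Gamma(T_i)}$ and, simultaneously, $\overline{Q}$ meets the corresponding side of $\Gamma(T_i)$. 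By (i) and (ii) both of these conditions have the same truth value for $Q$ in $\Gamma(m)$ and for $Q'$ in $\Gamma(m')$, so $\overline{Q}$ and $\overline{Q'}$ meet the same sides of $\overline{I^2}$. This establishes the inductive step, and the lemma follows. I expect the only delicate point to be this last piece of bookkeeping — namely the two reductions ``side of $\overline{I^2}$ met by $Q$'' $\rightsquigarrow$ ``side of $R$ met by $Q$'' $\rightsquigarrow$ ``side of $\Gamma(T_i)$ met by $Q$'', which rest on $\overline{R}$ and $\overline{\Gamma(T_i)}$ meeting ambient sides only in full sides of themselves, together with the claim in (ii) that horizontal rescaling preserves side-incidences; everything else is a direct unwinding of Definition \ref{defgeomap}.
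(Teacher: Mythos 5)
Your proposal is correct and is essentially a careful expansion of the paper's proof, which simply asserts that it is geometrically clear that neither associativity nor the interchange law can turn an interior block into a border block or vice versa. You supply exactly the verification behind that assertion — reduction to single rewrite steps, triviality of the interchange step since it leaves $\Gamma$ unchanged, and localization of an associativity step to a subrectangle where the three strips only change widths — so the approach is the same, just made rigorous.
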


\begin{proof}
It is clear from the geometric realizations that neither associativity
nor the interchange law can change an interior block to a border block or conversely.
\end{proof}

\begin{lemma}
\label{interiorlemma}
Let $B = \Gamma(m)$ be a commutative block partition.
Then the two commuting empty blocks must be interior blocks.
\end{lemma}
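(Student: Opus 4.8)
The plan is to prove a sharper form of Lemma \ref{borderlemma}: not only is the property of being a border block preserved under the rewrites, but for each empty block the \emph{ordered list of border contacts} is preserved; a label transposition between two commuting blocks is then incompatible with this invariance unless both blocks are interior.

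Concretely, for a side $e$ of $\overline{I^2}$ (bottom, top, left, or right) and a tree monomial $T \in \mathbf{Free}(n)$, let $\sigma_e(T)$ be the sequence of argument labels of those empty blocks of $\Gamma(T)$ whose closures meet $e$, listed in their natural order along $e$. The equivalent data is the subset $E_e(m) \subseteq \{1,\dots,n\}$ of argument slots whose block meets $e$, together with the order in which they occur along $e$; this depends only on the underlying tree shape. The key step — and the only delicate one — is to show that $\sigma_e$ is unchanged by a single application of associativity or of the interchange law at an arbitrary node of $T$. Such a rewrite replaces the dyadic partition of one subrectangle $R$ (the geometric realization of the affected subtree) by another partition of the \emph{same} rectangle $R$ and leaves everything outside $R$ alone; a block inside $R$ meets a side of $I^2$ precisely when $R$ does and the block meets the corresponding side of $\partial R$. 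So it suffices to check that the ordered sequence of sub-blocks of $R$ along each side of $\partial R$ is preserved. For horizontal associativity $(A \wedgehor B)\wedgehor C \leftrightarrow A \wedgehor (B \wedgehor C)$ the rectangle $R$ is split into the three vertical slabs occupied, in this order, by $A$, $B$, $C$ both before and after (only the two vertical cuts move), so this is clear; for the interchange law $(A\wedgehor B)\wedgever(C\wedgehor D)\leftrightarrow(A\wedgever C)\wedgehor(B\wedgever D)$ the rectangle $R$ is split into four quadrants with $A,B,C,D$ in the bottom-left, bottom-right, top-left, top-right quadrants respectively both before and after, and the internal structure of $A,B,C,D$ is untouched, so this is clear too. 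The remaining cases (vertical associativity, the other orientations of the interchange law) follow by transposing the coordinate axes and reflecting, as in the proofs of Lemma \ref{nasrinslemma} and Proposition \ref{twomaincuts}.

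Granting this invariance, suppose $m$ admits a commutativity relation transposing $x_i$ and $x_j$. The relation is a finite chain of associativity/interchange rewrites from $m(\dots,x_i,\dots,x_j,\dots)$ to $m(\dots,x_j,\dots,x_i,\dots)$, so by the invariance $\sigma_e$ takes the same value on these two monomials for every side $e$. Assume for contradiction that the empty block of $\Gamma(m)$ labelled $x_i$ — the block at argument slot $i$ — is a border block, with closure meeting some side $e$; thus $i \in E_e(m)$. Then $x_i$ appears in $\sigma_e$ of the left-hand monomial, hence also in $\sigma_e$ of the right-hand monomial; but on the right the label $x_i$ sits in slot $j$, so $j \in E_e(m)$, and therefore $x_j$ (the slot-$j$ label of $m$) also appears in $\sigma_e$ of the left-hand monomial. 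Now $\sigma_e$ of the left-hand monomial has $x_i$ and $x_j$ in the two positions indexed by slots $i$ and $j$, while $\sigma_e$ of the right-hand monomial has them swapped; since $x_i \ne x_j$ these sequences differ, contradicting the invariance. Hence the $x_i$-labelled block meets no side of $\overline{I^2}$ and is interior; the identical argument with $i$ and $j$ exchanged shows the $x_j$-labelled block is interior as well. The main obstacle is the elementary but fiddly verification that each rewrite preserves the ordered contact sequences $\sigma_e$; once that is established the contradiction is immediate.
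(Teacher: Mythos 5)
Your proposal is correct and takes essentially the same route as the paper: the paper's proof also observes that neither associativity nor the interchange law can change the ordered sequence of empty blocks along each side of $I^2$, and concludes that a transposition of labels is impossible unless both blocks are interior. You have simply made explicit the localization to the affected subrectangle and the case check for the elementary rewrites, which the paper leaves as ``clear from the geometric realization.''
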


\begin{proof}
Let $R_1$, \dots, $R_\ell$ be the empty blocks from left to right along the north side of $I^2$.
It is clear from the geometric realization that neither associativity nor the interchange law
can change the order of $R_1$, \dots, $R_\ell$.
The same applies to the other three sides.
\end{proof}


\section{Commutative block partitions in arity 10}

\begin{lemma}
\label{arity10slices}
Let $B = \Gamma(m)$ be a commutative block partition of arity 10.
Then $B$ has at least two and at most four parallel slices in either direction (horizontal or vertical).
\end{lemma}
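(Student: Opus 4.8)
The plan is to derive both inequalities from Proposition~\ref{twomaincuts} and Lemma~\ref{interiorlemma}. As in Proposition~\ref{twomaincuts} I assume that the commutativity relation carried by $m$ is not induced by partial composition from a commutativity relation of lower arity (a relation of that kind provides nothing new, so this is the effective standing hypothesis here); then Proposition~\ref{twomaincuts} tells us that $B=\Gamma(m)$ contains both the horizontal and the vertical bisection of $I^2$. The lower bound is then immediate, and the upper bound will follow from a short counting argument against the fact that the two commuting blocks are interior.

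For the lower bound, observe that the vertical (resp.\ horizontal) bisection of $I^2$ is by definition a vertical (resp.\ horizontal) primary cut of $I^2$. Hence in each of the two directions there is at least one primary cut, so $B$ has at least two parallel slices in that direction.

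For the upper bound I would first treat the vertical slices and then reduce the horizontal case to it by symmetry. Let $S_1,\dots,S_k$ be the vertical slices of $I^2$, listed from west to east; these are full-height strips $S_j=(x_{j-1},x_j)\times(0,1)$, each a subrectangle of $B$, and together they partition the $10$ empty blocks of $B$, so $\sum_{j=1}^{k}|S_j|=10$, with $k\ge 2$ by the previous paragraph. Since $B$ contains the horizontal bisection $(0,1)\times\{\tfrac12\}$ of $I^2$, its restriction to $S_j$, namely $(x_{j-1},x_j)\times\{\tfrac12\}$, is the horizontal bisection of $S_j$ and is present in $B$; hence $|S_j|\ge 2$ for every $j$. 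Therefore $10=\sum_{j=1}^{k}|S_j|\ge 2k$, so $k\le 5$. If $k=5$, equality forces $|S_j|=2$ for all $j$, so each $S_j$ consists of exactly the two blocks $(x_{j-1},x_j)\times(0,\tfrac12)$ and $(x_{j-1},x_j)\times(\tfrac12,1)$, whose closures meet the south and the north side of $I^2$ respectively; thus every empty block of $B$ is a border block, contradicting Lemma~\ref{interiorlemma}, which says that the two commuting empty blocks of $B$ are interior. Hence $k\le 4$.

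Finally, reflecting $B$ in the main diagonal of $I^2$ exchanges the horizontal and vertical operations; since associativity and the interchange law are symmetric under this exchange, the reflected partition is again a commutative block partition of arity $10$, and its vertical slices are the horizontal slices of $B$, so the bound just proved also yields at most four horizontal slices. The step requiring the most care is the unwinding of the definition of slices in the third paragraph — verifying that the vertical slices are genuinely subrectangles of $B$ that partition its empty blocks and that the global horizontal bisection restricts to the bisection of each slice — but this is routine bookkeeping, and I do not anticipate a genuine obstacle.
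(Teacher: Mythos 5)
Your proof is correct and follows essentially the same route as the paper: both main cuts exist by Proposition~\ref{twomaincuts} (giving the lower bound), each of the $k$ parallel slices is bisected by the orthogonal main cut so $2k\le 10$, and $k=5$ would force all ten blocks to be border blocks, contradicting Lemma~\ref{interiorlemma}. You are merely more explicit than the paper about the counting, the reflection symmetry, and the standing non-degeneracy hypothesis inherited from Proposition~\ref{twomaincuts}.
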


\begin{proof}
Lemma \ref{twomaincuts} shows that $B$ contains both main cuts; since $B$ contains 10 empty blocks,
$B$ has at most five parallel slices (four primary cuts) in either direction.
But if there are four primary cuts in one direction and the main cut in the other direction,
then we have 10 empty blocks, each of which is a border block, contradicting Lemma \ref{interiorlemma}.
\end{proof}

\begin{lemma}
\label{arity10blocks}
Let $B = \Gamma(m)$ be a commutative block partition of any arity.
Then $B$ has both main cuts by Lemma \ref{twomaincuts}, and hence $B$ consists of the union of
four square quarters $A_1$, \dots, $A_4$ (in the NW, NE,SW, SE corners respectively).
If one of these quarters has an empty block which is interior to $B$,
then that quarter contains at least three empty blocks.
If one of these quarters has two empty blocks which are both interior to $B$,
then that quarter contains at least four empty blocks.
Hence $B$ contains at least seven empty blocks.
\end{lemma}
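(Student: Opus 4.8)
The plan is to localize everything to the four quarters produced by the two main cuts, prove a single counting inequality for the empty blocks lying inside one quarter, and then add up.

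First I would invoke Lemma~\ref{twomaincuts} to get that $B$ contains both main cuts, so $B$ is the union of four $\tfrac12\times\tfrac12$ square quarters $A_1,\dots,A_4$ (NW, NE, SW, SE). Each $A_i$ is a subrectangle of $B$ in the sense of Definition~\ref{defsubrectangle}, hence a disjoint union of at least one empty block of $B$. The structural point to record about a quarter $A_i$ is that exactly two of its four sides lie on $\partial I^2$ --- call these its \emph{exterior} sides; they are the two sides of $A_i$ meeting at the corresponding corner of $I^2$, hence adjacent --- while the other two sides lie along the main cuts, in the interior of $I^2$. Consequently an empty block $R\subseteq A_i$ is a border block of $B$ exactly when $\overline R$ meets one of the two exterior sides of $A_i$, and is interior to $B$ otherwise.

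The heart of the matter is the claim: \emph{if $A_i$ contains $k\ge 1$ empty blocks of $B$, then at most $\max(k-2,0)$ of them are interior to $B$.} For $k=1$ the unique block is $A_i$ itself, a border block, so assume $k\ge 2$. Let $E_1,E_2$ be the exterior sides of $A_i$, let $c=E_1\cap E_2$ be the corner of $I^2$ they share, and let $c_1$ (resp.\ $c_2$) be the other endpoint of $E_1$ (resp.\ of $E_2$); since $E_1\cup E_2\subseteq\partial I^2$, all three points $c,c_1,c_2$ lie on $\partial I^2$. Any empty block whose closure contains one of $c,c_1,c_2$ is therefore a border block. If a single empty block $Y\subseteq A_i$ had all three of $c,c_1,c_2$ in its closure, then the closed rectangle $\overline Y\subseteq\overline{A_i}$, being convex, would contain the full edges $E_1$ and $E_2$; a subrectangle of $A_i$ containing two adjacent full edges of $A_i$ must equal $A_i$, forcing $k=1$. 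Hence for $k\ge 2$ there are at least two distinct border blocks among those meeting $c,c_1,c_2$, so at most $k-2$ of the blocks of $A_i$ are non-border, i.e.\ interior to $B$.

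The claim gives both assertions of the lemma at once: a quarter with an interior block has $\max(k-2,0)\ge 1$, hence $k\ge 3$; a quarter with two interior blocks has $\max(k-2,0)\ge 2$, hence $k\ge 4$. For the final count, $B$ is commutative, so by Lemma~\ref{interiorlemma} its two commuting empty blocks are interior to $B$, and they are distinct since the transposed indeterminates are distinct. If both lie in one quarter, that quarter carries at least four empty blocks and the remaining three at least one each, for a total of at least seven; if they lie in two different quarters, those two carry at least three each and the other two at least one each, for a total of at least eight. In either case $B$ has at least seven empty blocks. The one place where care is needed is the claim: one should resist arguing via the dyadic build-up of $A_i$, because the partition $B$ induces on a quarter need not itself be dyadic (a subrectangle of a dyadic partition can be sliced into three equal strips); the corner-block argument above avoids this and is valid for arbitrary block partitions, and, exactly as in the proof of Lemma~\ref{twomaincuts}, reflections may be used to treat the four quarters uniformly.
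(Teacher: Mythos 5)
Your proof is correct, and its overall skeleton matches the paper's: reduce to the four quarters via Proposition~\ref{twomaincuts}, bound the number of interior blocks a quarter can contain, invoke Lemma~\ref{interiorlemma} to place the two commuting blocks, and conclude with the sums $4+1+1+1$ and $3+3+1+1$. Where you genuinely diverge is in the per-quarter bound. The paper gets it by inspecting the possible tilings of a quarter by exactly two or exactly three rectangles (``created by a main cut''; ``three parallel slices'' versus a main cut followed by a perpendicular main cut in one half) and reading off that such quarters have zero, respectively at most one, interior block. You instead prove the uniform statement that a quarter with $k\ge 2$ empty blocks has at least two border blocks, by noting that the corner of $I^2$ and the two endpoints $c_1,c_2$ of its exterior sides each lie in the closure of some block of the quarter, and that a single block whose closure contained all three would have to be the whole quarter. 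This buys you something real: no enumeration of small tilings, a bound valid for every $k$ simultaneously, and an argument that works for arbitrary block partitions rather than only dyadic ones. One small quibble with a side remark: the partition induced on a \emph{quarter} is in fact always dyadic (a quarter is a dyadic rectangle, every empty block of $B$ is a dyadic rectangle, and a tiling of a dyadic square by dyadic rectangles always admits a main cut, since a dyadic interval whose interior contains the midpoint must be the whole interval); your ``three equal strips'' phenomenon occurs only for general subrectangles in the sense of Definition~\ref{defsubrectangle}, such as the union of three of eight equal vertical slices. This does not affect your proof, which nowhere uses dyadicity.
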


\begin{proof}
If one of the four subrectangles has only two empty blocks then
these two blocks were created by a main cut, and hence both of them are border blocks in $B$.
Similarly, if one of the rectangles has only three empty blocks, then either these three blocks
are three parallel slices (in which case all three are border blocks in $B$) or these
three blocks were created by a main cut in one direction followed by the main cut in the
other direction in one of the blocks formed by the first main cut (in which case only one
of the three blocks is an interior block in $B$).
Lemma \ref{interiorlemma} shows that $B$ has at least two interior blocks, and these can
occur either in two different subrectangles or in the same subrectangle.
For different subrectangles, $B$ contains at least $3+3+1+1$ empty blocks, and for
the same subrectangle, $B$ contains at least $4+1+1+1$ empty blocks.
\end{proof}

\begin{proposition}
\label{atleast8proposition}
A commutative block partition $B$ has at least eight empty blocks.
\end{proposition}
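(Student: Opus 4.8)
The plan is to rule out the possibility that $B$ has exactly seven empty blocks; combined with Lemma \ref{arity10blocks}, which already gives at least seven, this proves the claim. So suppose $B = \Gamma(m)$ has seven blocks. By Lemma \ref{twomaincuts} it has both main cuts, so $I^2$ is the union of four quarters $A_1,\dots,A_4$, each containing at least one empty block. As noted in the proof of Lemma \ref{arity10blocks}, a quarter with one or two empty blocks contains no interior block, and a quarter with three empty blocks contains at most one. Since by Lemma \ref{interiorlemma} the two commuting blocks are interior, and the total is only seven, both interior blocks must lie in a single quarter $A$ that has exactly four empty blocks, while each of the other three quarters is a single border block. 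Thus the two commuting empty blocks are the two interior blocks of this $4$-block quarter $A$.

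Next I would classify the dyadic $4$-block partitions of the quarter $A$ --- a square with two ``outer'' sides lying on $\partial I^2$ and two ``inner'' sides --- having exactly two blocks disjoint from both outer sides. Enumerating the finitely many dyadic $4$-block partitions of a square and retaining only those with the correct interior count (keeping track of which internal cuts line up into a full cut of $A$), one finds that, up to the diagonal symmetry of $A$, there is essentially one such configuration: $A$ is bisected by a main cut into a block $U$ and a half $W$; $W$ is bisected into a border block $V$, meeting an outer side of $A$, and a quarter $R$; and $R$ is bisected once more into the two interior blocks $P$ and $Q$. In particular $P$ and $Q$ are precisely the two blocks of the $2$-block subrectangle $R$, and one checks that in the (essentially unique) monomial realizing $A$ --- for example $A = U \wedgever ( V \wedgehor ( P \wedgever Q ) )$ in one case --- \emph{every} internal node of the subtree realizing $A$ has at least one leaf among its children.

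Then I would prove the rigidity statement: no sequence of associativity and interchange rewrites applied to a monomial $m$ with $\Gamma(m) = B$ can transpose $P$ and $Q$. Because every internal node of the subtree realizing $A$ has a leaf child, no subtree of that subtree matches either side of the interchange law \eqref{intlaw}, so interchange can never be applied inside the $A$-subtree; the only reassociations available there are reparenthesizations of a maximal chain of a single operation (such as $V \wedgehor P \wedgehor Q$), and these preserve the left-to-right order of the leaves. A rewrite performed higher in $m$ moves the intact subtree realizing $A$ around as a unit and never descends into it. Hence the left-to-right order of the arguments inside the $A$-subtree, in which the leaf at the site of $P$ precedes the leaf at the site of $Q$, is invariant under all rewrites; but since $R$ cannot be reoriented (no interchange applies there), the monomial with $P$ and $Q$ transposed would have to realize this same subrectangle $R$ with the opposite leaf order --- a contradiction. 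Therefore $B$ cannot have seven blocks, and $B$ has at least eight.

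The step I expect to be the main obstacle is this rigidity argument. One must make the enumeration of $4$-block partitions of the quarter genuinely exhaustive, and then argue with care that interchanges performed \emph{above} the frozen subtree --- which do permute whole subtrees and do change the global left-to-right leaf order --- can nevertheless never reorder the leaves \emph{internal} to $A$, so that the invariant ``order of the arguments within the $A$-subtree'' really is preserved; the conclusion then follows because no rewrite implements the commutation of the two children of a binary node.
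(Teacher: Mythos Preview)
Your overall reduction is the same as the paper's: assume seven blocks, use Lemma~\ref{arity10blocks} to force all four interior-block candidates into a single four-block quarter $A$, and then show no commutativity relation can transpose the two interior blocks of $A$. But two points need correction.

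\medskip

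\textbf{The enumeration is incomplete.} Up to symmetry the paper finds \emph{three} dyadic four-block configurations of the quarter with exactly two interior blocks (Figure~\ref{atleast8}), not one. Besides your configuration $U \wedgever (V \wedgehor (P \wedgever Q))$ there are the two associations $((c \wedgehor d)\wedgehor e)\wedgever f$ and $(c \wedgehor (d\wedgehor e))\wedgever f$, in which the bottom half carries three parallel vertical slices; in the first of these the half of $W$ that gets bisected again is the one touching the \emph{outer} side, so your description ``$W$ is bisected into a border block $V$ \dots\ and a quarter $R$'' does not fit. This is minor, since your key property (every internal node of the $A$-subtree has a leaf child, so no interchange applies \emph{inside} $A$) happens to hold for all three.

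\medskip

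\textbf{The rigidity argument has a genuine gap.} Your claim that ``a rewrite performed higher in $m$ moves the intact subtree realizing $A$ around as a unit'' is false. Take the first configuration: the full monomial is
\[
m \;=\; (a \wedgehor b)\,\wedgever\,\big(\,((c \wedgehor (d\wedgever e))\wedgever f)\,\wedgehor\, g\,\big).
\]
One interchange at the root of $m$ gives $(a \wedgever A)\wedgehor(b\wedgever g)$. Now the parent of $A$ carries the \emph{same} operation $\wedgever$ as the root of $A$, so associativity merges them:
\[
a \wedgever \big((c \wedgehor (d\wedgever e))\wedgever f\big)
\;=\;
\big(a \wedgever (c \wedgehor (d\wedgever e))\big)\wedgever f,
\]
and the next interchange peels $f$ away from the rest of what was $A$. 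So the ``frozen subtree'' does not stay a subtree, and the invariant ``left-to-right order of leaves within the $A$-subtree'' is not well defined along the rewriting sequence. Your last paragraph anticipates trouble with interchanges above $A$, but still assumes $A$ remains intact; it does not.

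\medskip

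\textbf{How the paper closes the gap.} Rather than produce an abstract invariant, the paper simply computes, for each of the three configurations, the \emph{entire} equivalence class of $m$ under associativity and interchange. Because every internal node of $m$ (not just of $A$) has a leaf child at almost every stage, at each step there is at most one applicable rewrite, and the class has only four elements; one checks by inspection that no two of them differ by a transposition of arguments. If you want to rescue your approach, you would need either an invariant that survives the boundary-merging phenomenon above (e.g.\ something tracking the planar position of $d$ relative to $e$ through all rewrites), or to fall back on this finite enumeration.
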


\begin{proof}
Proposition \ref{twomaincuts} shows that $B$ must have both horizontal and vertical main cuts.
Lemma \ref{borderlemma} shows that an interior block cannot commute with a border block,
so $B$ must have at least two interior empty blocks.
The proof of Lemma \ref{arity10blocks} shows that the number of empty blocks in $B$ is at least seven,
with the minimum occurring if and only if there are two interior blocks in the same quarter $A_1$, \dots, $A_4$.
Reflecting in the horizontal and/or vertical axes if necessary, we may assume that the NW quarter $A_1$
contains two empty blocks which are interior to $B$ and contains only the horizontal main cut (otherwise
we reflect in the NW-SE diagonal).
Figure \ref{atleast8} shows the three partitions with seven empty blocks satisfying these conditions.

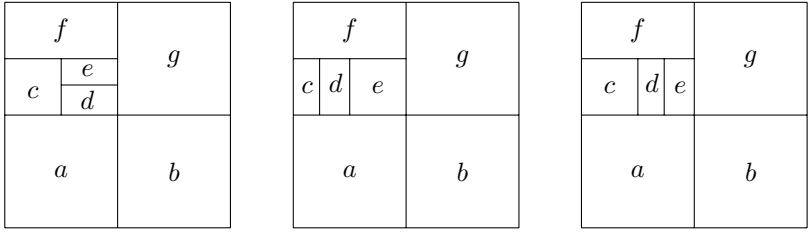
\begin{figure}[ht]
\begin{center}
\begin{tikzpicture}[ draw = black, x = 10 mm, y = 10 mm ]
\draw
(0,0) -- (3,0)
(0,1.5) -- (3,1.5)
(0,3) -- (3,3)
(0,2.25) -- (1.5,2.25)
(0.75,1.9) -- (1.5,1.9)
(0,0) -- (0,3)
(0.75,1.5) -- (0.75,2.25)
(1.5,0) -- (1.5,3)
(3,0) -- (3,3)
(0.75,0.75) node {$a$}
(2.25,0.75) node {$b$}
(1.1,2.1) node {$e$}
(0.375,1.8) node {$c$}
(1.1,1.7) node {$d$}
(0.75,2.625) node {$f$}
(2.25,2.25) node {$g$};
\end{tikzpicture}
\qquad
\begin{tikzpicture}[ draw = black, x = 10 mm, y = 10 mm ]
\draw
(0,0) -- (3,0)
(0,1.5) -- (3,1.5)
(0,3) -- (3,3)
(0,2.25) -- (1.5,2.25)
(0,0) -- (0,3)
(0.75,1.5) -- (0.75,2.25)
(0.35,1.5) -- (0.35,2.25)
(1.5,0) -- (1.5,3)
(3,0) -- (3,3)
(0.7500,0.750) node {$a$}
(2.2500,0.750) node {$b$}
(0.1875,1.875) node {$c$}
(0.5625,1.925) node {$d$}
(1.1250,1.875) node {$e$}
(0.7500,2.625) node {$f$}
(2.2500,2.250) node {$g$};
\end{tikzpicture}
\qquad
\begin{tikzpicture}[ draw = black, x = 10 mm, y = 10 mm ]
\draw
(0,0) -- (3,0)
(0,1.5) -- (3,1.5)
(0,3) -- (3,3)
(0,2.25) -- (1.5,2.25)
(0,0) -- (0,3)
(0.75,1.5) -- (0.75,2.25)
(1.1,1.5) -- (1.1,2.25)
(1.5,0) -- (1.5,3)
(3,0) -- (3,3)
(0.7500,0.750) node {$a$}
(2.2500,0.750) node {$b$}
(0.3750,1.875) node {$c$}
(0.9375,1.925) node {$d$}
(1.3125,1.875) node {$e$}
(0.7500,2.625) node {$f$}
(2.2500,2.25) node {$g$};
\end{tikzpicture}
\end{center}
\vspace{-4mm}
\caption{Three block partitions for proof of Proposition \ref{atleast8proposition}}
\label{atleast8}
\end{figure}

\noindent
Consider the monomial corresponding to the first partition in Figure \ref{atleast8}.
We may apply the interchange law only where two orthogonal cuts intersect at a point which is
interior to both; that is, at a plus $+$ configuration.
We may apply associativity only where we have $( - \ast - ) \ast -$ or $- \ast ( - \ast - )$.
At each step, there is only one possible rewriting that may be applied;
we underline the three (associativity) or four (interchange law) factors involved:
\begin{align*}
( \underline{a} \wedgehor \underline{b} )
\wedgever
( \underline{( ( c \wedgehor ( d \wedgever e ) ) \wedgever f )} \wedgehor \underline{g} )
&\equiv
( \underline{a} \wedgever ( \underline{( c \wedgehor ( d \wedgever e ) )} \wedgever \underline{f} ) )
\wedgehor
( b \wedgever g )
\\
&\equiv
( \underline{( a \wedgever ( c \wedgehor ( d \wedgever e ) ) )} \wedgever \underline{f} )
\wedgehor
( \underline{b} \wedgever \underline{g} )
\\
&\equiv
( a \wedgever ( c \wedgehor ( d \wedgever e ) ) \wedgehor b )
\wedgever
( f \wedgehor g ).
\end{align*}
Similar calculations apply to the second and third block partitions.
In this way, we have computed the entire equivalence class of the original monomial subject
to rewriting using associativity and the interchange law.
From this we see that no block partition with seven empty blocks admits a commutativity relation.
\end{proof}

The method used for the proof of Proposition \ref{atleast8proposition} can be extended
to show that a dyadic block partition with eight empty blocks cannot be commutative.
In this case, we have the following subcases:
(i)
one quarter $A_i$ has five empty blocks, and the other three are empty;
(ii)
one quarter $A_i$ has four empty blocks, another $A_j$ has two empty blocks,
and the other two are empty
(here we distinguish two subsubcases, depending on whether $A_i$ and $A_j$ share
an edge or only a corner);
(iii)
two quarters $A_i$ and $A_j$ each have three empty blocks, and the other two are empty
(with the same two subsubcases).
This provides a completely different proof, independent of machine computation, of one of
the main results in \cite{BM2016}; we omit the (rather lengthy) details.

In what follows, we write $B$ for a commutative block partition with 10 empty blocks.
Lemma \ref{interiorlemma} shows that the commuting blocks must be interior blocks, and
Lemma \ref{arity10slices} shows that $B$ has either two, three, or four parallel slices in either direction.
Thus, if $B$ has three (respectively four) parallel slices in one direction, then the commuting blocks must be
in the middle slice (respectively the middle two slices).
Without loss of generality, interchanging horizontal and vertical if necessary, we may assume that
these parallel slices are vertical.

\subsection{Four parallel vertical slices}

In this case we have the vertical and horizontal main cuts, and two additional vertical primary cuts.
Applying horizontal associativity if necessary, this gives the following configuration:
\[
\begin{tikzpicture} [ draw = black, x = 8 mm, y = 8 mm ]
\draw
(0.00,0.00) -- (0.00,3.00)
(0.75,0.00) -- (0.75,3.00)
(1.50,0.00) -- (1.50,3.00)
(2.25,0.00) -- (2.25,3.00)
(3.00,0.00) -- (3.00,3.00)
(0.00,0.00) -- (3.00,0.00)
(0.00,1.50) -- (3.00,1.50)
(0.00,3.00) -- (3.00,3.00);
\end{tikzpicture}
\]
This configuration has eight empty blocks, all of which are border blocks.
We need two more cuts to create two interior blocks.
Applying vertical associativity if necessary in the second slice from the left,
and applying a dihedral symmetry of the square if necessary,
we are left with three possible configurations:
\begin{equation}
\label{configsABC}
A\colon
\adjustbox{valign=m}{
\begin{tikzpicture} [ draw = black, x = 8 mm, y = 8 mm ]
\draw
(0.00,0.00) -- (0.00,3.00)
(0.75,0.00) -- (0.75,3.00)
(1.50,0.00) -- (1.50,3.00)
(2.25,0.00) -- (2.25,3.00)
(3.00,0.00) -- (3.00,3.00)
(0.00,0.00) -- (3.00,0.00)
(0.00,1.50) -- (3.00,1.50)
(0.00,3.00) -- (3.00,3.00)
(0.75,2.25) -- (1.50,2.25)
(1.50,0.75) -- (2.25,0.75);
\end{tikzpicture}
}
\quad\quad
B\colon
\adjustbox{valign=m}{
\begin{tikzpicture} [ draw = black, x = 8 mm, y = 8 mm ]
\draw
(0.00,0.00) -- (0.00,3.00)
(0.75,0.00) -- (0.75,3.00)
(1.50,0.00) -- (1.50,3.00)
(2.25,0.00) -- (2.25,3.00)
(3.00,0.00) -- (3.00,3.00)
(0.00,0.00) -- (3.00,0.00)
(0.00,1.50) -- (3.00,1.50)
(0.00,3.00) -- (3.00,3.00)
(0.75,2.25) -- (1.50,2.25)
(0.75,0.75) -- (1.50,0.75);
\end{tikzpicture}
}
\quad\quad
C\colon
\adjustbox{valign=m}{
\begin{tikzpicture} [ draw = black, x = 8 mm, y = 8 mm ]
\draw
(0.000,0.00) -- (0.000,3.00)
(0.750,0.00) -- (0.750,3.00)
(1.500,0.00) -- (1.500,3.00)
(2.250,0.00) -- (2.250,3.00)
(3.000,0.00) -- (3.000,3.00)
(0.000,0.00) -- (3.000,0.00)
(0.000,1.50) -- (3.000,1.50)
(0.000,3.00) -- (3.000,3.00)
(0.750,2.25) -- (1.500,2.25)
(1.125,1.50) -- (1.125,2.25);
\end{tikzpicture}
}
\end{equation}

\subsubsection{Configuration $A$}

We present simultaneously the algebraic and geometric steps in the proof
of a new commutativity relation.
We label the empty blocks in the initial configuration as follows:
\[
\begin{tikzpicture}[ draw = black, x = 8 mm, y = 8 mm ]
\draw
(0,0) -- (3,0)
(0,1.5) -- (1.5,1.5)
(0,3) -- (3,3)
(0.75,2.25) -- (1.5,2.25)
(1.5,0.75) -- (2.25,0.75)
(0,0) -- (0,3)
(0.75,0) -- (0.75,3)
(1.5,0) -- (1.5,3)
(3,0) -- (3,3)
(2.25,0) -- (2.25,3)
(1.5,1.5) -- (3,1.5)
(0.375,0.75) node {$a$}
(1.125,0.75) node {$b$}
(1.875,0.375) node {$f$}
(1.875,1.15) node {$g$}
(2.625,0.75) node {$h$}
(0.375,2.25) node {$c$}
(1.125,1.875) node {$d$}
(1.125,2.625) node {$e$}
(1.875,2.25) node {$i$}
(2.625,2.25) node {$j$};
\end{tikzpicture}
\]
We show that this partition admits a commutativity relation transposing $d$ and $g$.
We refer the reader to Figure \ref{bigpicture} and \S\ref{diagramchasing} as an aid
to understanding the proof.
In the following list of monomials, we indicate the four factors $w, x, y, z$ taking part
in each application of the interchange law.
We omit parentheses in products using the same operation two or more times;
the factors $w, x, y, z$ make clear how we reassociate such products
between two consecutive applications of the interchange law.
The diagrams which appear after the list of monomials represent the same steps in
geometric form; in each application of the interchange law as the rewrite rule
$( a \star_2 b ) \star_1 ( c \star_2 d ) \mapsto ( a \star_1 c ) \star_2 ( b \star_1 d )$
where
$\{ \star_1, \star_2 \} = \{ \wedgehor, \wedgever \}$,
we indicate the root operation $\star_1$ by a thick line and the child operations $\star_2$ by dotted lines:
\begin{align}
&\text{factors}\; w, x, y, z & & \text{result of application of interchange law}
\notag
\\ \midrule
&\text{initial configuration}  &
&((a \wedgehor b)\wedgever (c \wedgehor (d \wedgever e)))\wedgehor (((f \wedgever g)\wedgehor h )\wedgever (i \wedgehor j))
\notag
\\[-2pt]
&f \wedgever g, h , i, j  &
&((a \wedgehor b)\wedgever (c \wedgehor (d \wedgever e)))\wedgehor ((f \wedgever g \wedgever i )\wedgehor (h \wedgever j))
\label{I1} \tag{I1}
\\[-2pt]
&f, g\wedgever i, h , j  &
&((a \wedgehor b)\wedgever (c \wedgehor (d \wedgever e)))\wedgehor ((f \wedgehor h )\wedgever ((g \wedgever i) \wedgehor j))
\label{I2} \tag{I2}
\\[-2pt]
&a, b, c, d \wedgever e  &
&((a \wedgever c)\wedgehor (b \wedgever d \wedgever e))\wedgehor ((f \wedgehor h )\wedgever ((g \wedgever i) \wedgehor j))
\label{I3} \tag{I3}
\\[-2pt]
&a, c, b \wedgever d, e  &
&((a \wedgehor (b \wedgever d))\wedgever (c \wedgehor e))\wedgehor ((f \wedgehor h )\wedgever ((g \wedgever i) \wedgehor j))
\label{I4} \tag{I4}
\\[-2pt]
&a \wedgehor (b \wedgever d), c \wedgehor e, f \wedgehor h, (g \wedgever i) \wedgehor j  &
&((a \wedgehor (b \wedgever d))\wedgehor (f \wedgehor h ))\wedgever (c \wedgehor e \wedgehor (g \wedgever i) \wedgehor j)
\label{I5} \tag{I5}
\\[-2pt]
&a \wedgehor (b \wedgever d), f \wedgehor h , c \wedgehor e \wedgehor(g \wedgever i), j  &
&((a \wedgehor (b \wedgever d))\wedgever (c \wedgehor e\wedgehor(g \wedgever i)))\wedgehor ((f \wedgehor h ) \wedgever j)
\label{I6} \tag{I6}
\\[-2pt]
&a, b \wedgever d, c \wedgehor e, g \wedgever i  &
&((a \wedgever (c \wedgehor e))\wedgehor (b \wedgever d \wedgever g \wedgever i))\wedgehor ((f \wedgehor h ) \wedgever j)
\label{I7} \tag{I7}
\\[-2pt]
&a, c \wedgehor e, b \wedgever d \wedgever g, i  &
&((a \wedgehor (b \wedgever d \wedgever g))\wedgever (c \wedgehor e \wedgehor i))\wedgehor ((f \wedgehor h ) \wedgever j))
\label{I8} \tag{I8}
\\[-2pt]
&a \wedgehor (b \wedgever d \wedgever g), c \wedgehor e \wedgehor i, f \wedgehor h , j  &
&((a \wedgehor (b \wedgever d \wedgever g)\wedgehor (f \wedgehor h ) )\wedgever (c \wedgehor e \wedgehor i \wedgehor j)
\label{I9} \tag{I9}
\\[-2pt]
&a \wedgehor (b \wedgever d \wedgever g), f \wedgehor h, c \wedgehor e, i\wedgehor j  &
&((a \wedgehor (b \wedgever d \wedgever g))\wedgever (c \wedgehor e) )\wedgehor ((f \wedgehor h ) \wedgever (i \wedgehor j))
\label{I10} \tag{I10}
\\[-2pt]
&a, b \wedgever d \wedgever g, c, e  &
&((a \wedgever c)\wedgehor (b \wedgever d \wedgever g\wedgever e )\wedgehor ((f \wedgehor h ) \wedgever (i \wedgehor j))
\label{I11} \tag{I11}
\\[-2pt]
&a, c, b \wedgever d, g\wedgever e  &
&((a \wedgehor(b \wedgever d ))\wedgever (c \wedgehor (g\wedgever e) )\wedgehor ((f \wedgehor h ) \wedgever (i \wedgehor j))
\label{I12} \tag{I12}
\\[-2pt]
&a \wedgehor(b \wedgever d ), c \wedgehor (g\wedgever e), f \wedgehor h, i \wedgehor j  &
&(a \wedgehor(b \wedgever d )\wedgehor (f \wedgehor h ) )\wedgever (( c \wedgehor (g\wedgever e) \wedgehor (i \wedgehor j))
\label{I13} \tag{I13}
\\[-2pt]
&a, (b \wedgever d )\wedgehor f \wedgehor h, c \wedgehor (g\wedgever e), i \wedgehor j  &
&(a \wedgever ( c \wedgehor (g \wedgever e))\wedgehor (((b \wedgever d )\wedgehor f \wedgehor h) \wedgever (i \wedgehor j))
\label{I14} \tag{I14}
\\[-2pt]
&b \wedgever d, f \wedgehor h, i , j  &
&(a \wedgever ( c \wedgehor (g \wedgever e))\wedgehor ((b \wedgever d \wedgever i) \wedgehor ((f \wedgehor h) \wedgever j))
\label{I15} \tag{I15}
\\[-2pt]
& b, d \wedgever i, f \wedgehor h, j  &
&(a \wedgever ( c \wedgehor (g \wedgever e))\wedgehor ((b \wedgehor f \wedgehor h) \wedgever ((d \wedgever i) \wedgehor j))
\label{I16} \tag{I16}
\\[-2pt]
&a, c \wedgehor (g \wedgever e), b \wedgehor f \wedgehor h, (d \wedgever i) \wedgehor j  &
&(a \wedgehor b \wedgehor f \wedgehor h)\wedgever ((c \wedgehor (g \wedgever e)) \wedgehor ((d \wedgever i) \wedgehor j))
\label{I17} \tag{I17}
\\[-2pt]
&a \wedgehor b, f \wedgehor h, c \wedgehor (g \wedgever e),(d \wedgever i) \wedgehor j  &
&((a \wedgehor b) \wedgever(c \wedgehor (g \wedgever e)) \wedgehor ((f \wedgehor h) \wedgever ((d \wedgever i) \wedgehor j))
\label{I18} \tag{I18}
\\[-2pt]
&f, h, d \wedgever i, j  &
&((a \wedgehor b) \wedgever(c \wedgehor (g \wedgever e)) \wedgehor ((f \wedgever d \wedgever i) \wedgehor (h \wedgever j))
\label{I19} \tag{I19}
\\[-2pt]
&f \wedgever d, i,h, j  &
&((a \wedgehor b) \wedgever(c \wedgehor (g \wedgever e)) \wedgehor (((f \wedgever d) \wedgehor h) \wedgever (i \wedgehor j))
\label{I20} \tag{I20}
\end{align}
The same sequence of rewritings has the following geometric representation:
\begin{align*}
&
\adjustbox{valign=m}{$
\begin{tikzpicture}[ draw = black, x = 8 mm, y = 8 mm ]
\draw
(0,0) -- (3,0)
(0,1.5) -- (1.5,1.5)
(0,3) -- (3,3)
(0.75,2.25) -- (1.5,2.25)
(1.5,0.75) -- (2.25,0.75)
(0,0) -- (0,3)
(0.75,0) -- (0.75,3)
(1.5,0) -- (1.5,3)
(3,0) -- (3,3)
(0.375,0.75) node {$a$}
(1.125,0.75) node {$b$}
(1.875,0.375) node {$f$}
(1.875,1.15) node {$g$}
(2.625,0.75) node {$h$}
(0.375,2.25) node {$c$}
(1.125,1.875) node {$d$}
(1.125,2.625) node {$e$}
(1.875,2.25) node {$i$}
(2.625,2.25) node {$j$};
\draw[dotted]
(2.25,0) -- (2.25,3);
\draw[ very thick]
(1.5,1.5) -- (3,1.5);
\end{tikzpicture}_{\eqref{I1}}
$}
\;
\adjustbox{valign=m}{$
\begin{tikzpicture}[ draw = black, x = 8 mm, y = 8 mm ]
\draw
(0,0) -- (3,0)
(0,1.5) -- (1.5,1.5)
(0,3) -- (3,3)
(0.75,2.25) -- (1.5,2.25)
(1.5,2.25) -- (2.25,2.25)
(0,0) -- (0,3)
(0.75,0) -- (0.75,3)
(1.5,0) -- (1.5,3)
(3,0) -- (3,3)
(0.375,0.75) node {$a$}
(1.125,0.75) node {$b$}
(1.875,0.75) node {$f$}
(1.875,1.875) node {$g$}
(2.625,0.75) node {$h$}
(0.375,2.25) node {$c$}
(1.125,1.875) node {$d$}
(1.125,2.625) node {$e$}
(1.875,2.625) node {$i$}
(2.625,2.25) node {$j$};
\draw[ very thick]
(2.25,0) -- (2.25,3);
\draw[dotted]
(1.5,1.5) -- (3,1.5);
\end{tikzpicture}_{\eqref{I2}}
$}
\;
\adjustbox{valign=m}{$
\begin{tikzpicture}[ draw = black, x = 8 mm, y = 8 mm ]
\draw
(0,0) -- (3,0)
(0,3) -- (3,3)
(0.75,2.25) -- (1.5,2.25)
(1.5,2.25) -- (2.25,2.25)
(2.25,0) -- (2.25,3)
(1.5,1.5) -- (3,1.5)
(0,0) -- (0,3)
(1.5,0) -- (1.5,3)
(3,0) -- (3,3)
(0.375,0.75) node {$a$}
(1.125,0.75) node {$b$}
(1.875,0.75) node {$f$}
(1.875,1.875) node {$g$}
(2.625,0.75) node {$h$}
(0.375,2.25) node {$c$}
(1.125,1.875) node {$d$}
(1.125,2.625) node {$e$}
(1.875,2.625) node {$i$}
(2.625,2.25) node {$j$};
\draw[dotted]
(0.75,0) -- (0.75,3);
\draw[very thick]
(0,1.5) -- (1.5,1.5);
\end{tikzpicture}_{\eqref{I3}}
$}
\;
\adjustbox{valign=m}{$
\begin{tikzpicture}[ draw = black, x = 8 mm, y = 8 mm ]
\draw
(0,0) -- (3,0)
(0,3) -- (3,3)
(0.75,0.75) -- (1.5,0.75)
(1.5,2.25) -- (2.25,2.25)
(0,0) -- (0,3)
(1.5,1.5) -- (3,1.5)
(2.25,0) -- (2.25,3)
(1.5,0) -- (1.5,3)
(3,0) -- (3,3)
(0.375,0.75) node {$a$}
(1.125,0.375) node {$b$}
(1.875,0.75) node {$f$}
(1.875,1.875) node {$g$}
(2.625,0.75) node {$h$}
(0.375,2.25) node {$c$}
(1.125,1.125) node {$d$}
(1.125,2.25) node {$e$}
(1.875,2.625) node {$i$}
(2.625,2.25) node {$j$};
\draw[dotted]
(0,1.5) -- (1.5,1.5) ;
\draw[ very thick]
(0.75,0) -- (0.75,3);
\end{tikzpicture}_{\eqref{I4}}
$}
\\
&
\adjustbox{valign=m}{$
\begin{tikzpicture}[ draw = black, x = 8 mm, y = 8 mm ]
\draw
(0,0) -- (3,0)
(0,3) -- (3,3)
(0.75,0.75) -- (1.5,0.75)
(1.5,2.25) -- (2.25,2.25)
(0,0) -- (0,3)
(2.25,0) -- (2.25,3)
(0.75,0) -- (0.75,3)
(3,0) -- (3,3)
(0.375,0.75) node {$a$}
(1.125,0.375) node {$b$}
(1.875,0.75) node {$f$}
(1.875,1.875) node {$g$}
(2.625,0.75) node {$h$}
(0.375,2.25) node {$c$}
(1.125,1.125) node {$d$}
(1.125,2.25) node {$e$}
(1.875,2.625) node {$i$}
(2.625,2.25) node {$j$};
\draw[dotted]
(0,1.5) -- (3,1.5) ;
\draw[ very thick]
(1.5,0) -- (1.5,3);
\end{tikzpicture}_{\eqref{I5}}
$}
\;
\adjustbox{valign=m}{$
\begin{tikzpicture}[ draw = black, x = 8 mm, y = 8 mm ]
\draw
(0,0) -- (3,0)
(0,3) -- (3,3)
(0.75,0.75) -- (1.5,0.75)
(0.75,2.25) -- (1.5,2.25)
(0.75,0) -- (0.75,1.5)
(0,0) -- (0,3)
(2.25,0) -- (2.25,1.5)
(3,0) -- (3,3)
(0.75,1.5) -- (0.75,3)
(0.375,1.5) -- (0.375,3)
(0.375,0.75) node {$a$}
(1.25,0.375) node {$b$}
(1.85,0.75) node {$f$}
(1.125,1.875) node {$g$}
(2.625,0.75) node {$h$}
(0.185,2.25) node {$c$}
(1.25,1.125) node {$d$}
(0.6,2.25) node {$e$}
(1.125,2.625) node {$i$}
(2.625,2.25) node {$j$};
\draw[very thick]
(0,1.5) -- (3,1.5);
\draw[dotted]
(1.5,0) -- (1.5,3);
\end{tikzpicture}_{\eqref{I6}}
$}
\;
\adjustbox{valign=m}{$
\begin{tikzpicture}[ draw = black, x = 8 mm, y = 8 mm ]
\draw
(0,0) -- (3,0)
(0,3) -- (3,3)
(1.5,1.5) -- (3,1.5)
(0.75,0.75) -- (1.5,0.75)
(0.75,2.25) -- (1.5,2.25)
(0,0) -- (0,3)
(0.375,1.5) -- (0.375,3)
(1.5,0) -- (1.5,3)
(1.5,0) -- (1.5,3)
(2.25,0) -- (2.25,1.5)
(3,0) -- (3,3)
(0.375,0.75) node {$a$}
(1.25,0.375) node {$b$}
(1.85,0.75) node {$f$}
(1.125,1.875) node {$g$}
(2.625,0.75) node {$h$}
(0.185,2.25) node {$c$}
(1.25,1.125) node {$d$}
(0.6,2.25) node {$e$}
(1.125,2.625) node {$i$}
(2.625,2.25) node {$j$};
\draw[ very thick]
(0,1.5) -- (1.5,1.5);
\draw[ dotted]
(0.75,0) -- (0.75,3);
\end{tikzpicture}_{\eqref{I7}}
$}
\;
\adjustbox{valign=m}{$
\begin{tikzpicture}[ draw = black, x = 8 mm, y = 8 mm ]
\draw
(0,0) -- (3,0)
(0,3) -- (3,3)
(0.75,0.75) -- (1.5,0.75)
(0.75,1.15) -- (1.5,1.15)
(0,0) -- (0,3)
(0.375,1.5) -- (0.375,3)
(1.5,0) -- (1.5,3)
(2.25,0) -- (2.25,1.5)
(3,0) -- (3,3)
(1.5,0) -- (1.5,3)
(1.5,1.5) -- (3,1.5)
(0.375,0.75) node {$a$}
(1.25,0.375) node {$b$}
(1.85,0.75) node {$f$}
(1.125,1.3) node {$g$}
(2.625,0.75) node {$h$}
(0.185,2.25) node {$c$}
(1.25,0.9) node {$d$}
(0.6,2.25) node {$e$}
(1.125,2.25) node {$i$}
(2.625,2.25) node {$j$};
\draw[dotted]
(0,1.5) -- (1.5,1.5);
\draw[ very thick]
(0.75,0) -- (0.75,3);
\end{tikzpicture}_{\eqref{I8}}
$}
\\
&
\adjustbox{valign=m}{$
\begin{tikzpicture}[ draw = black, x = 8 mm, y = 8 mm ]
\draw
(0,0) -- (3,0)
(0,3) -- (3,3)
(0.75,0.75) -- (1.5,0.75)
(0.75,1.15) -- (1.5,1.15)
(0,0) -- (0,3)
(0.375,1.5) -- (0.375,3)
(0.75,1.5) -- (0.75,3)
(0.75,0) -- (0.75,1.5)
(2.25,0) -- (2.25,1.5)
(3,0) -- (3,3)
(0.375,0.75) node {$a$}
(1.25,0.375) node {$b$}
(1.85,0.75) node {$f$}
(1.125,1.3) node {$g$}
(2.625,0.75) node {$h$}
(0.185,2.25) node {$c$}
(1.25,0.9) node {$d$}
(0.6,2.25) node {$e$}
(1.125,2.25) node {$i$}
(2.625,2.25) node {$j$};
\draw[very thick]
(1.5,0) -- (1.5,3);
\draw[  dotted]
(0,1.5) -- (3,1.5);
\end{tikzpicture}_{\eqref{I9}}
$}
\;
\adjustbox{valign=m}{$
\begin{tikzpicture}[ draw = black, x = 8 mm, y = 8 mm ]
\draw
(0,0) -- (3,0)
(0,3) -- (3,3)
(0.75,0.75) -- (1.5,0.75)
(0.75,1.15) -- (1.5,1.15)
(0,0) -- (0,3)
(0.75,0) -- (0.75,3)
(2.25,0) -- (2.25,3)
(3,0) -- (3,3)
(0.375,0.75) node {$a$}
(1.25,0.375) node {$b$}
(1.85,0.75) node {$f$}
(1.125,1.3) node {$g$}
(2.625,0.75) node {$h$}
(0.375,2.25) node {$c$}
(1.25,0.9) node {$d$}
(1.25,2.25) node {$e$}
(1.85,2.25) node {$i$}
(2.625,2.25) node {$j$};
\draw[dotted]
(1.5,0) -- (1.5,3);
\draw[very thick]
(0,1.5) -- (3,1.5);
\end{tikzpicture}_{\eqref{I10}}
$}
\;
\adjustbox{valign=m}{$
\begin{tikzpicture}[ draw = black, x = 8 mm, y = 8 mm ]
\draw
(0,0) -- (3,0)
(0,3) -- (3,3)
(1.5,1.5) -- (3,1.5)
(0.75,0.75) -- (1.5,0.75)
(0.75,1.15) -- (1.5,1.15)
(0,0) -- (0,3)
(1.5,0) -- (1.5,3)
(2.25,0) -- (2.25,3)
(3,0) -- (3,3)
(0.375,0.75) node {$a$}
(1.25,0.375) node {$b$}
(1.85,0.75) node {$f$}
(1.125,1.3) node {$g$}
(2.625,0.75) node {$h$}
(0.375,2.25) node {$c$}
(1.25,0.9) node {$d$}
(1.25,2.25) node {$e$}
(1.85,2.25) node {$i$}
(2.625,2.25) node {$j$};
\draw[dotted]
(0.75,0) -- (0.75,3);
\draw[very thick]
(0,1.5) -- (1.5,1.5);
\end{tikzpicture}_{\eqref{I11}}
$}
\;
\adjustbox{valign=m}{$
\begin{tikzpicture}[ draw = black, x = 8 mm, y = 8 mm ]
\draw
(0,0) -- (3,0)
(0,3) -- (3,3)
(1.5,1.5) -- (3,1.5)
(0.75,0.75) -- (1.5,0.75)
(0.75,2.25) -- (1.5,2.25)
(0,0) -- (0,3)
(0.75,0) -- (0.75,3)
(2.25,0) -- (2.25,3)
(3,0) -- (3,3)
(1.5,0) -- (1.5,3)
(0.375,0.75) node {$a$}
(1.125,0.375) node {$b$}
(1.85,0.75) node {$f$}
(1.125,1.85) node {$g$}
(2.625,0.75) node {$h$}
(0.375,2.25) node {$c$}
(1.125,1.25) node {$d$}
(1.125,2.555) node {$e$}
(1.85,2.25) node {$i$}
(2.625,2.25) node {$j$};
\draw[very thick]
(0.75,0) -- (0.75,3);
\draw[dotted]
(0,1.5) -- (3,1.5);
\end{tikzpicture}_{\eqref{I12}}
$}
\\
&
\adjustbox{valign=m}{$
\begin{tikzpicture}[ draw = black, x = 8 mm, y = 8 mm ]
\draw
(0,0) -- (3,0)
(0,3) -- (3,3)
(0.75,0.75) -- (1.5,0.75)
(0.75,2.25) -- (1.5,2.25)
(0,0) -- (0,3)
(0.75,0) -- (0.75,3)
(2.25,0) -- (2.25,3)
(3,0) -- (3,3)
(0.375,0.75) node {$a$}
(1.125,0.375) node {$b$}
(1.85,0.75) node {$f$}
(1.125,1.85) node {$g$}
(2.625,0.75) node {$h$}
(0.375,2.25) node {$c$}
(1.125,1.25) node {$d$}
(1.125,2.555) node {$e$}
(1.85,2.25) node {$i$}
(2.625,2.25) node {$j$};
\draw[very thick]
(1.5,0) -- (1.5,3);
\draw[dotted]
(0,1.5) -- (3,1.5);
\end{tikzpicture}_{\eqref{I13}}
$}
\;
\adjustbox{valign=m}{$
\begin{tikzpicture}[ draw = black, x = 8 mm, y = 8 mm ]
\draw
(0,0) -- (3,0)
(0,3) -- (3,3)
(1.5,0.75) -- (1.85,0.75)
(0.75,2.25) -- (1.5,2.25)
(0,0) -- (0,3)
(0.75,1.5) -- (0.75,3)
(2.25,0) -- (2.25,1.5)
(3,0) -- (3,3)
(1.85,0) -- (1.85,1.5)
(2.25,1.5) -- (2.25,3)
(1.5,1.5) -- (3,1.5)
(0.75,0.75) node {$a$}
(1.65,0.6) node {$b$}
(2.15,0.75) node {$f$}
(1.125,1.85) node {$g$}
(2.625,0.75) node {$h$}
(0.375,2.25) node {$c$}
(1.65,1.25) node {$d$}
(1.125,2.555) node {$e$}
(1.85,2.25) node {$i$}
(2.625,2.25) node {$j$};
\draw[dotted]
(1.5,0) -- (1.5,3);
\draw[ very thick]
(0,1.5) -- (3,1.5);
\end{tikzpicture}_{\eqref{I14}}
$}
\;
\adjustbox{valign=m}{$
\begin{tikzpicture}[ draw = black, x = 8 mm, y = 8 mm ]
\draw
(0,0) -- (3,0)
(0,3) -- (3,3)
(0,0) -- (0,3)
(3,0) -- (3,3)
(1.5,0) -- (1.5,3)
(1.5,0.75) -- (2.25,0.75)
(0,1.5) -- (3,1.5)
(0.75,2.25) -- (1.5,2.25)
(0.75,1.5) -- (0.75,3)
(2.625,0) -- (2.625,1.5)
(0.75,0.75) node {$a$}
(1.875,0.375) node {$b$}
(0.375,2.25) node {$c$}
(1.875,1.125) node {$d$}
(1.125,2.555) node {$e$}
(2.4375,0.75) node {$f$}
(1.125,1.85) node {$g$}
(2.8125,0.75) node {$h$}
(1.875,2.25) node {$i$}
(2.625,2.25) node {$j$};
\draw[dotted]
(2.25,0) -- (2.25,3);
\draw[ very thick]
(1.5,1.5) -- (3,1.5);
\end{tikzpicture}_{\eqref{I15}}
$}
\;
\adjustbox{valign=m}{$
\begin{tikzpicture}[ draw = black, x = 8 mm, y = 8 mm ]
\draw
(0,0) -- (3,0)
(0,3) -- (3,3)
(1.5,2.25) -- (2.25,2.25)
(0.75,2.25) -- (1.5,2.25)
(0,0) -- (0,3)
(0.75,1.5) -- (0.75,3)
(1.5,0) -- (1.5,3)
(3,0) -- (3,3)
(2.625,0) -- (2.625,1.5)
(0,1.5) -- (1.5,1.5)
(0.75,0.75) node {$a$}
(1.875,0.75) node {$b$}
(2.4375,0.75) node {$f$}
(1.125,1.85) node {$g$}
(2.8125,0.75) node {$h$}
(0.375,2.25) node {$c$}
(1.875,1.85) node {$d$}
(1.125,2.555) node {$e$}
(1.85,2.555) node {$i$}
(2.625,2.25) node {$j$};
\draw[dotted]
(1.5,1.5) -- (3,1.5);
\draw[ very thick]
(2.25,0) -- (2.25,3);
\end{tikzpicture}_{\eqref{I16}}
$}
\\
&
\adjustbox{valign=m}{$
\begin{tikzpicture}[ draw = black, x = 8 mm, y = 8 mm ]
\draw
(0,0) -- (3,0)
(0,3) -- (3,3)
(1.5,2.25) -- (2.25,2.25)
(0.75,2.25) -- (1.5,2.25)
(0,0) -- (0,3)
(0.75,1.5) -- (0.75,3)
(1.5,0) -- (1.5,3)
(3,0) -- (3,3)
(2.625,0) -- (2.625,1.5)
(2.25,0) -- (2.25,3)
(0.75,0.75) node {$a$}
(1.875,0.75) node {$b$}
(2.4375,0.75) node {$f$}
(1.125,1.85) node {$g$}
(2.8125,0.75) node {$h$}
(0.375,2.25) node {$c$}
(1.875,1.85) node {$d$}
(1.125,2.555) node {$e$}
(1.85,2.555) node {$i$}
(2.625,2.25) node {$j$};
\draw[dotted]
(0,1.5) -- (3,1.5);
\draw[ very thick]
(1.5,0) -- (1.5,3);
\end{tikzpicture}_{\eqref{I17}}
$}
\;
\adjustbox{valign=m}{$
\begin{tikzpicture}[ draw = black, x = 8 mm, y = 8 mm ]
\draw
(0,0) -- (3,0)
(0,1.5) -- (1.5,1.5)
(0,3) -- (3,3)
(0.75,2.25) -- (1.5,2.25)
(1.5,2.25) -- (2.25,2.25)
(0,0) -- (0,3)
(0.75,0) -- (0.75,3)
(2.25,0) -- (2.25,3)
(3,0) -- (3,3)
(0.375,0.75) node {$a$}
(1.125,0.75) node {$b$}
(1.875,0.75) node {$f$}
(1.875,1.875) node {$d$}
(2.625,0.75) node {$h$}
(0.375,2.25) node {$c$}
(1.125,1.875) node {$g$}
(1.125,2.625) node {$e$}
(1.875,2.625) node {$i$}
(2.625,2.25) node {$j$};
\draw[dotted]
(1.5,0) -- (1.5,3);
\draw[ very thick]
(0,1.5) -- (3,1.5);
\end{tikzpicture}_{\eqref{I18}}
$}
\;
\adjustbox{valign=m}{$
\begin{tikzpicture}[ draw = black, x = 8 mm, y = 8 mm ]
\draw
(0,0) -- (3,0)
(0,3) -- (3,3)
(0.75,2.25) -- (1.5,2.25)
(1.5,2.25) -- (2.25,2.25)
(0,0) -- (0,3)
(0.75,0) -- (0.75,3)
(1.5,0) -- (1.5,3)
(3,0) -- (3,3)
(0,1.5) -- (1.5,1.5)
(0.375,0.75) node {$a$}
(1.125,0.75) node {$b$}
(1.875,0.75) node {$f$}
(1.875,1.875) node {$d$}
(2.625,0.75) node {$h$}
(0.375,2.25) node {$c$}
(1.125,1.875) node {$g$}
(1.125,2.625) node {$e$}
(1.875,2.625) node {$i$}
(2.625,2.25) node {$j$};
\draw[dotted]
(2.25,0) -- (2.25,3);
\draw[ very thick]
(1.5,1.5) -- (3,1.5);
\end{tikzpicture}_{\eqref{I19}}
$}
\;
\adjustbox{valign=m}{$
\begin{tikzpicture}[ draw = black, x = 8 mm, y = 8 mm ]
\draw
(0,0) -- (3,0)
(0,1.5) -- (1.5,1.5)
(0,3) -- (3,3)
(2.25,0) -- (2.25,3)
(1.5,0.75) -- (2.25,0.75)
(0.75,2.25) -- (1.5,2.25)
(0,0) -- (0,3)
(0.75,0) -- (0.75,3)
(1.5,0) -- (1.5,3)
(3,0) -- (3,3)
(0.375,0.75) node {$a$}
(1.125,0.75) node {$b$}
(1.875,0.375) node {$f$}
(1.875,1.1) node {$d$}
(2.625,0.75) node {$h$}
(0.375,2.25) node {$c$}
(1.125,1.875) node {$g$}
(1.125,2.625) node {$e$}
(1.875,2.25) node {$i$}
(2.625,2.25) node {$j$};
\draw[dotted]
(1.5,1.5) -- (3,1.5);
\draw[ very thick]
(2.25,0) -- (2.25,3);
\end{tikzpicture}_{\eqref{I20}}
$}
\end{align*}

\begin{theorem}
In every double interchange semigroup, the following commutativity relation holds
for all values of the arguments $a, \dots, j$:
\begin{align*}
&
((a \wedgehor b)\wedgever (c \wedgehor (d \wedgever e)))\wedgehor
(((f \wedgever g)\wedgehor h )\wedgever (i \wedgehor j))
\equiv {}
\\
&
((a \wedgehor b)\wedgever (c \wedgehor (g \wedgever e)))\wedgehor
(((f \wedgever d)\wedgehor h )\wedgever (i \wedgehor j))
\end{align*}
\end{theorem}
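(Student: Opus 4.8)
The plan is to prove the stated identity by exhibiting an explicit finite sequence of rewrite steps, each an instance of one of the associative laws \eqref{associativelaws} or of the interchange law \eqref{intlaw}, carrying the left-hand tree monomial to the right-hand one. By the commutativity of the diagram in Figure \ref{bigpicture} --- concretely, by Lemma \ref{assoclemma}, which lets us reassociate freely by passing through the nonbinary-tree normal form, and by Lemma \ref{nasrinslemma}, which lets us apply the interchange law unambiguously by passing through the dyadic-block-partition normal form --- any such sequence certifies that the two monomials have the same image in $\mathbf{DIA}$, which, since we work throughout with basis monomials, is exactly the asserted multilinear commutativity relation in every double interchange semigroup. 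The required sequence is the list of twenty steps \eqref{I1}--\eqref{I20} displayed above, applied to Configuration~$A$ with its empty blocks labelled $a,\dots,j$; so the proof reduces to checking that this list is valid and that its last term is the right-hand side.

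Concretely, I would proceed step by step through \eqref{I1}--\eqref{I20}. At each stage the recipe is uniform: using associativity --- which is why we may omit parentheses in iterated products with a single operation --- reassociate so that the four factors $w,x,y,z$ recorded in the table become the leaves of a subterm of shape $(w\star_2 x)\star_1(y\star_2 z)$ with $\{\star_1,\star_2\}=\{\wedgehor,\wedgever\}$, and then apply the interchange law as the rewrite $(w\star_2 x)\star_1(y\star_2 z)\mapsto(w\star_1 y)\star_2(x\star_1 z)$. In the geometric picture this is exactly the move drawn in the accompanying block partitions: at the unique available ``$+$'' configuration --- four empty blocks meeting at a point interior to two crossing cuts --- reverse the order in which the two orthogonal bisections are performed, which is visually unambiguous. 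Tracking the labels through all twenty steps, the final monomial \eqref{I20} is
\[
((a \wedgehor b)\wedgever (c \wedgehor (g \wedgever e)))\wedgehor (((f \wedgever d)\wedgehor h )\wedgever (i \wedgehor j)),
\]
which is the initial monomial with $d$ and $g$ interchanged; this is the claim.

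The main obstacle is not verifying the chain: each step is easy to audit on the corresponding picture, and Lemmas \ref{borderlemma} and \ref{interiorlemma} supply free consistency checks, since border/interior status and the left-to-right order of empty blocks along each side of $I^2$ are preserved at every step. The real work is \emph{finding} the sequence. The natural setting is the (enormous) graph whose vertices are the arity-$10$ tree monomials and whose edges are single applications of associativity or of the interchange law, and one must locate a path from Configuration~$A$ to a copy of itself with two leaf labels transposed. I would guide the search with the geometric intuition --- watching how the two interior blocks $d$ and $g$ are shuttled through a succession of ``$+$'' configurations into positions where their labels may legitimately be exchanged --- and, should this heuristic stall, fall back on a breadth-first search over the graph to confirm the validity of the displayed certificate (and, optionally, that no substantially shorter one exists).
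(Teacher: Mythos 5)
Your proposal is correct and is essentially identical to the paper's own argument: the theorem is certified precisely by the displayed chain of rewritings (I1)--(I20) for Configuration $A$, each step being a reassociation followed by one application of the interchange law at the indicated four factors $w,x,y,z$, with the final monomial being the original one with $d$ and $g$ transposed. The additional remarks about searching the rewriting graph and using the border/interior-block lemmas as consistency checks are sensible heuristics but play no logical role beyond the verification of the given certificate.
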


\subsubsection{Configuration $B$}

For configuration $B$ in display \eqref{configsABC}, we label only the two blocks which transpose
in the commutativity relation.
The required applications of associativity and the interchange law can easily be reconstructed
from the diagrams:
\begin{align*}
&
\adjustbox{valign=m}{
\begin{tikzpicture} [ draw = black, x = 6 mm, y = 6 mm ]
\draw
(0.00,0.00) -- (0.00,3.00)
(0.75,0.00) -- (0.75,3.00)
(1.50,0.00) -- (1.50,3.00)
(2.25,0.00) -- (2.25,3.00)
(3.00,0.00) -- (3.00,3.00)
(0.00,0.00) -- (3.00,0.00)
(0.00,1.50) -- (3.00,1.50)
(0.00,3.00) -- (3.00,3.00)
(0.75,2.25) -- (1.50,2.25)
(0.75,0.75) -- (1.50,0.75);
\draw
(1.125,1.875) node {$c$}
(1.125,1.1) node {$g$};
\end{tikzpicture}
}
\quad
\adjustbox{valign=m}{
\begin{tikzpicture} [ draw = black, x = 6 mm, y = 6 mm ]
\draw
(0.00,0.00) -- (0.00,3.00)
(0.75,0.00) -- (0.75,1.50)
(1.50,0.00) -- (1.50,3.00)
(2.25,0.00) -- (2.25,3.00)
(3.00,0.00) -- (3.00,3.00)
(0.00,0.00) -- (3.00,0.00)
(0.00,1.50) -- (3.00,1.50)
(0.00,3.00) -- (3.00,3.00)
(1.50,2.25) -- (2.25,2.25)
(0.75,0.75) -- (1.50,0.75)
(2.625,1.50) --(2.625,3.00);
\draw
(1.875,1.875) node {$c$}
(1.125,1.1) node {$g$};
\end{tikzpicture}
}
\quad
\adjustbox{valign=m}{
\begin{tikzpicture} [ draw = black, x = 6 mm, y = 6 mm ]
\draw
(0.00,0.00) -- (0.00,3.00)
(0.75,0.00) -- (0.75,1.50)
(1.50,0.00) -- (1.50,3.00)
(2.25,0.00) -- (2.25,3.00)
(3.00,0.00) -- (3.00,3.00)
(0.00,0.00) -- (3.00,0.00)
(0.00,1.50) -- (3.00,1.50)
(0.00,3.00) -- (3.00,3.00)
(1.50,0.750) -- (2.25,0.750)
(0.75,0.75) -- (1.50,0.75)
(2.625,1.50) --(2.625,3.00);
\draw
(1.875,1.1) node {$c$}
(1.125,1.1) node {$g$};
\end{tikzpicture}
}
\quad
\adjustbox{valign=m}{
\begin{tikzpicture} [ draw = black, x = 6 mm, y = 6 mm ]
\draw
(0.00,0.00) -- (0.00,3.00)
(0.75,0.00) -- (0.75,3.00)
(1.50,0.00) -- (1.50,3.00)
(2.25,0.00) -- (2.25,3.00)
(3.00,0.00) -- (3.00,3.00)
(0.00,0.00) -- (3.00,0.00)
(0.00,1.50) -- (3.00,1.50)
(0.00,3.00) -- (3.00,3.00)
(1.50,0.750) -- (2.25,0.750)
(0.75,0.75) -- (1.50,0.75);
\draw
(1.875,1.1) node {$c$}
(1.125,1.1) node {$g$};
\end{tikzpicture}
}
\quad
\adjustbox{valign=m}{
\begin{tikzpicture} [ draw = black, x = 6 mm, y = 6 mm ]
\draw
(0.00,0.00) -- (0.00,3.00)
(0.75,0.00) -- (0.75,3.00)
(1.50,0.00) -- (1.50,3.00)
(2.25,0.00) -- (2.25,3.00)
(3.00,0.00) -- (3.00,3.00)
(0.00,0.00) -- (3.00,0.00)
(0.00,1.50) -- (3.00,1.50)
(0.00,3.00) -- (3.00,3.00)
(0.75,2.25) -- (1.50,2.25)
(1.50,0.75) -- (2.25,0.75);
\draw
(1.875,1.1) node {$c$}
(1.125,1.875) node {$g$};
\end{tikzpicture}
}
\\[1mm]
&
\adjustbox{valign=m}{
\begin{tikzpicture} [ draw = black, x = 6 mm, y = 6 mm ]
\draw
(0.00,0.00) -- (0.00,3.00)
(0.75,0.00) -- (0.75,3.00)
(1.50,0.00) -- (1.50,3.00)
(2.25,1.50) -- (2.25,3.00)
(3.00,0.00) -- (3.00,3.00)
(0.00,0.00) -- (3.00,0.00)
(0.00,1.50) -- (3.00,1.50)
(0.00,3.00) -- (3.00,3.00)
(0.75,2.25) -- (1.50,2.25)
(0.75,0.75) -- (1.50,0.75)
(0.375,0.00) --(0.375,1.50);
\draw
(1.125,1.875) node {$g$}
(1.125,1.1) node {$c$};
\end{tikzpicture}
}
\quad
\adjustbox{valign=m}{
\begin{tikzpicture} [ draw = black, x = 6 mm, y = 6 mm ]
\draw
(0.00,0.00) -- (0.00,3.00)
(0.75,0.00) -- (0.75,3.00)
(1.50,0.00) -- (1.50,3.00)
(2.25,1.50) -- (2.25,3.00)
(3.00,0.00) -- (3.00,3.00)
(0.00,0.00) -- (3.00,0.00)
(0.00,1.50) -- (3.00,1.50)
(0.00,3.00) -- (3.00,3.00)
(0.75,2.25) -- (1.50,2.25)
(0.75,1.875) -- (1.50,1.875)
(0.375,0.00) --(0.375,1.50);
\draw
(1.125,2.05) node {\scalebox{.67}{$g$}}
(1.125,1.65) node {\scalebox{.67}{$c$}};
\end{tikzpicture}
}
\quad
\adjustbox{valign=m}{
\begin{tikzpicture} [ draw = black, x = 6 mm, y = 6 mm ]
\draw
(0.00,0.00) -- (0.00,3.00)
(0.75,0.00) -- (0.75,3.00)
(1.50,0.00) -- (1.50,3.00)
(2.25,0.00) -- (2.25,3.00)
(3.00,0.00) -- (3.00,3.00)
(0.00,0.00) -- (3.00,0.00)
(0.00,1.50) -- (3.00,1.50)
(0.00,3.00) -- (3.00,3.00)
(0.75,2.25) -- (1.50,2.25)
(0.75,1.875) -- (1.50,1.875);
\draw
(1.125,2.05) node {\scalebox{.67}{$g$}}
(1.125,1.65) node {\scalebox{.67}{$c$}};
\end{tikzpicture}
}
\quad
\adjustbox{valign=m}{
\begin{tikzpicture} [ draw = black, x = 6 mm, y = 6 mm ]
\draw
(0.00,0.00) -- (0.00,3.00)
(0.75,0.00) -- (0.75,3.00)
(1.50,0.00) -- (1.50,3.00)
(2.25,0.00) -- (2.25,3.00)
(3.00,0.00) -- (3.00,3.00)
(0.00,0.00) -- (3.00,0.00)
(0.00,1.50) -- (3.00,1.50)
(0.00,3.00) -- (3.00,3.00)
(0.75,2.25) -- (1.50,2.25)
(0.75,0.75) -- (1.50,0.75);
\draw
(1.125,1.875) node {$g$}
(1.125,1.1) node {$c$};
\end{tikzpicture}
}
\end{align*}

\begin{theorem}
In every double interchange semigroup, the following commutativity relation holds
for all values of the arguments $a, \dots, j$:
\[
\begin{array}{l}
( ( a \wedgehor ( b \wedgever c ) ) \wedgever ( f \wedgehor ( g \wedgever h ) ) )
\wedgehor
( ( d \wedgehor e ) \wedgever ( i \wedgehor j ) )
\equiv {}
\\[1mm]
( ( a \wedgehor ( b \wedgever g ) ) \wedgever ( f \wedgehor ( c \wedgever h ) ) )
\wedgehor
( ( d \wedgehor e ) \wedgever ( i \wedgehor j ) )
\end{array}
\]
\end{theorem}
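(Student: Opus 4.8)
\emph{Proof proposal.}
The plan is to establish the relation by the diagrammatic method already used for Configuration $A$ and described in \S\ref{diagramchasing}: exhibit an explicit finite sequence of rewrite moves, each an application of associativity (in either direction, for either operation) or of the interchange law \eqref{intlaw} (in either direction), beginning at the tree monomial $m$ whose geometric realization $\Gamma(m)$ is Configuration $B$ of display \eqref{configsABC} carrying the labelling of the first diagram above, and ending at a tree monomial $m'$ with the \emph{same} underlying pattern of parentheses and operation symbols but with $c$ and $g$ transposed. By the discussion following Figure \ref{bigpicture}, the resulting equality of the cosets of $m$ and $m'$ in $\mathbf{DIA}$ is, transported to the set operad, precisely the claimed commutativity relation in every double interchange semigroup.

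Concretely, I would read the nine displayed block partitions as the successive values of $\Gamma$ along the rewrite path, and realise each transition on the algebraic side by the appropriate trip around the commutative diagram of Figure \ref{bigpicture}. A reassociation (which changes the dyadic partition but leaves the underlying nonbinary tree fixed) is performed by applying $\alpha$, rewriting the binary tree as a nonbinary tree via $\rho$ and back via $\rho^{-1}$, then $\alpha^{-1}$; an interchange (which changes the nonbinary tree but leaves the dyadic partition fixed) is performed by applying $\chi$, rewriting via $\gamma$ as a dyadic block partition, flipping one $+$ configuration of four empty blocks, and returning via $\gamma^{-1}$ and $\chi^{-1}$. For each interchange step one locates the unique interior point of the current partition at which two orthogonal cuts cross transversally with all four incident empty blocks lying strictly inside both cuts; this is the only place the rule $(w\star_2 x)\star_1(y\star_2 z)\mapsto(w\star_1 y)\star_2(x\star_1 z)$ with $\{\star_1,\star_2\}=\{\wedgehor,\wedgever\}$ applies, so the whole path is forced once the order of moves is fixed, just as in the table \eqref{I1}--\eqref{I20}.

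Two verifications then remain. First, legality of each move: every interchange step really is applied at four blocks meeting transversally at an interior crossing (equivalently, the current tree contains a subtree $(w\star_2 x)\star_1(y\star_2 z)$ at the indicated place), and every reassociation step really is applied at a $(-\ast-)\ast-$ or $-\ast(-\ast-)$ subconfiguration for a single operation $\ast$. Second, that the endpoints agree: the final labelled partition is Configuration $B$ again, with $c$ and $g$ in each other's former empty blocks and $a,b,d,e,f,h,i,j$ all back in their starting blocks, and the coset representative produced at the end has the same parenthesisation and operation symbols as $m$. Tracking the labels through the nine displayed diagrams reduces both points to routine inspection, considerably shorter than the twenty-step check for Configuration $A$.

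The genuine difficulty is the \emph{discovery} of such a path rather than its verification: because associativity and the interchange law interact in the way noted in \S\ref{diagramchasing} (one typically must reassociate ``the wrong way'', apply interchange, then reassociate back), there is no confluent or greedy procedure, and the path must be found by searching the graph of interchange moves interleaved with reassociations (compare the graph $G_n$ of Lemma \ref{nasrinslemma}). A useful consistency check on a candidate path is the reflective symmetry of Configuration $B$ in its horizontal main cut, which exchanges the two interior empty blocks: this symmetry carries any valid path to another valid path in which the roles of $c$ and $g$ are interchanged, and in particular fixes the set of intermediate partitions.
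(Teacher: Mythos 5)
Your proposal is correct and follows essentially the same route as the paper: the paper's own proof of this theorem consists precisely of the nine displayed block partitions, together with the remark that the intervening applications of associativity and the interchange law ``can easily be reconstructed from the diagrams,'' which is exactly the verification procedure you describe. The only slip is in your closing side remark: the reflection in the horizontal main cut does carry valid rewrite paths to valid rewrite paths with $c$ and $g$ exchanged, but it need not fix the set of intermediate partitions (several of the displayed intermediate partitions are visibly not symmetric under that reflection); since you offer this only as a consistency check, it does not affect the argument.
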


\subsubsection{Configuration $C$}

For configuration $C$ in display \eqref{configsABC}, recall that applying the interchange law
does not change the partition (only the monomial representing the partition), and applying associativity
can be done only horizontally to the entire configuration or vertically to the second slice from the left.
None of these operations transposes the two smallest empty blocks, so we obtain no commutativity
relation.


\subsection{Three parallel horizontal slices}

In this subsection we consider horizontal rather than vertical slices, since this makes it a little easier
to follow the discussion.
We do not claim to have discovered all possible commutativity relations with three parallel slices, since
the number of cases is very large.
However, we determine 32 commutativity relations, 16 of which are new, and 16 of which follow immediately
from one of the known arity nine relations \cite{BM2016}.
Moreover, the 16 new relations may all be obtained from a single relation by applying associativity and
the automorphism group of the square (the dihedral group of order 8).
Without loss of generality, this leaves the following two cases.

\emph{Case 1}:
The horizontal slices have 2, 6, 2 empty blocks, labelled as follows:
\begin{center}
\begin{tikzpicture} [ draw = black, x = 8 mm, y = 8 mm ]
\draw
(0,0) -- (3,0)
(0,1.5) -- (3,1.5)
(0,3) -- (3,3)
(0,2.25) -- (3,2.25)
(0,0) -- (0,3)
(0.75,1.5) -- (0.75,2.25)
(1.5,0) -- (1.5,3)
(1.125,1.5) -- (1.125,2.25)
(2.25,1.5) -- (2.25,2.25)
(1.875,1.5) -- (1.875,2.25)
(3,0) -- (3,3)
(0.75,0.75) node {$a$}
(2.25,0.75) node {$b$}
(0.375,1.875) node {$c$}
(0.9375,1.875) node {$d$}
(1.3125,1.875) node {$e$}
(1.6875,1.875) node {$f$}
(2.0625,1.875) node {$g$}
(2.625,1.875) node {$h$}
(0.75,2.625) node {$i$}
(2.25,2.625) node {$j$};
\end{tikzpicture}
\end{center}
The two commutating empty blocks could be any two of $d$, $e$, $f$, $g$.
But in this configuration, it is easy to see that no sequence of applications of associativity
and the interchange law will change the order of these four blocks.

\emph{Case 2}:
The horizontal slices have 2, 5, 3 empty blocks.
There are two subcases, depending on whether the third horizontal slice has two vertical cuts,
or one vertical cut and one horizontal cut.
In the latter subcase, we label the blocks as follows:
\begin{equation}
\label{oneofeach}
\adjustbox{valign=m}{
\begin{tikzpicture} [ draw = black, x = 8 mm, y = 8 mm ]
\draw
(0,0) -- (3,0)
(0,1.5) -- (3,1.5)
(0,3) -- (3,3)
(0,2.25) -- (3,2.25)
(1.5,0.75) -- (3,0.75)
(0,0) -- (0,3)
(0.75,1.5) -- (0.75,2.25)
(1.5,0) -- (1.5,3)
(2.25,1.5) -- (2.25,2.25)
(1.92,1.5) -- (1.92,2.25)
(3,0) -- (3,3)
(0.75,0.75) node {$a$}
(2.25,0.375) node {$b$}
(2.25,1.125) node {$c$}
(0.375,1.875) node {$d$}
(1.6875,1.875) node {$f$}
(2.0625,1.875) node {$g$}
(2.625,1.875) node {$h$}
(1.125,1.875) node {$e$}
(0.75,2.625) node {$i$}
(2.25,2.625) node {$j$};
\end{tikzpicture}
}
\end{equation}

\begin{theorem}
In every double interchange semigroup, the following commutativity relation holds
for all values of the arguments $a, \dots, j$:
\[
\begin{array}{l}
(a \wedgehor (b \wedgever c))\wedgever ((d \wedgehor e) \wedgever i)
\wedgehor
(((f \wedgehor g) \wedgehor h) \wedgever j)
\equiv {}
\\[1mm]
(a \wedgehor (b \wedgever c))  \wedgever (( d \wedgehor e) \wedgever i)
\wedgehor
(((g  \wedgehor  f)\wedgehor h) \wedgever j)
\end{array}
\]
\end{theorem}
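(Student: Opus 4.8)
The plan is to prove this relation by the rewriting technique already used above for Configurations $A$ and $B$: starting from the tree monomial $m$ which is the geometric realization of the labelled partition in \eqref{oneofeach}, I would produce an explicit finite sequence of rewrite steps, each one a reassociation of a $\wedgehor$- or $\wedgever$-product or an application of the interchange law \eqref{intlaw} at a crossing of a horizontal and a vertical cut, ending at a monomial $m'$ with exactly the same pattern of parentheses and operation symbols as $m$ but with the leaves $f$ and $g$ interchanged. Parenthesizing fully as in the geometric realization,
\[
m \;=\; \bigl(a \wedgehor (b \wedgever c)\bigr)\wedgever\Bigl(\bigl((d \wedgehor e)\wedgever i\bigr)\wedgehor\bigl(((f \wedgehor g)\wedgehor h)\wedgever j\bigr)\Bigr),
\]
and by Lemma \ref{interiorlemma} the only empty blocks that can be transposed are the two interior ones $f$ and $g$, so $(f\,g)$ is the only transposition we need to realize, consistently with the statement.

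As in Configuration $A$, I would record the sequence twice: once algebraically, as a list of monomials in which the four factors $w,x,y,z$ taking part in each interchange step are displayed (the reassociations between two consecutive interchange steps being determined by those factors), and once geometrically, as the corresponding sequence of labelled block partitions, drawing the root operation of each interchange step as a thick line and the two child operations as dotted lines. The sequence itself I would find by chasing the picture: in \eqref{oneofeach} the interchange law is immediately applicable at the two crossings on the central vertical line --- where the two main cuts meet, and where the vertical main cut meets the full-width horizontal cut detaching the top slice $i\wedgehor j$ --- and after each interchange or reassociation the block partition changes, exposing fresh crossings; one follows such moves, steering so that $f$ and $g$ are brought together into a $2\times2$ subgrid, exchanged there, and then returned, until the partition is back to \eqref{oneofeach} with $f$ and $g$ swapped. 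I expect the length to be comparable to the twenty-step sequence used for Configuration $A$. A possible shortcut worth checking first is whether merging a pair of adjacent empty blocks (for example $i$ and $j$, or $d$ and $e$) produces an arity-$9$ partition that already appears among the commutative partitions of \cite{BM2016}; if so, the relation would follow at once by operadic partial composition.

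The hard part will be the bookkeeping, not any conceptual obstacle. Applying the interchange law changes the associated nonbinary tree while fixing the block partition, whereas applying associativity changes the block partition while fixing the nonbinary tree (cf.\ \S\ref{diagramchasing}), so a long alternation of the two is easy to mistrack. One must check at every stage that the chosen rewrite is legitimate --- ideally, as in Configuration $A$, that it is the only rewrite available at that stage, which keeps the sequence unambiguous --- and, decisively, that the composite of all the rewrites induces exactly the transposition $(f\,g)$ on the ten leaves and the identity on the shape of the tree. One must also make sure that every interchange used to open up the picture is genuinely undone later, so that the terminal partition is \eqref{oneofeach} itself and not merely an associate of it. Once such a sequence is written out and checked, the equality of the cosets of $m$ and $m'$ in $\mathbf{DIA}$ --- equivalently, the displayed identity in every double interchange semigroup --- follows exactly as in the earlier theorems.
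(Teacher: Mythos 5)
Your plan is the paper's method, but it is only a plan: the entire mathematical content of this theorem is the explicit chain of rewritings, and you have not produced it. The paper's proof is a list of eighteen applications of the interchange law \eqref{intlaw}, interleaved with reassociations, carrying the initial monomial to the same association type with $f$ and $g$ transposed; nothing short of such an explicit, verifiable chain establishes the relation. The existence of the chain cannot be taken for granted by ``chasing the picture until $f$ and $g$ are swapped'': in this very section the paper exhibits configurations satisfying all the same necessary conditions (both main cuts, interior blocks present) for which \emph{no} sequence of rewrites transposes any pair of blocks --- Configuration $C$ of \eqref{configsABC}, and Case 1 of the three-slice analysis (the $2,6,2$ partition, where $d,e,f,g$ are all interior yet nothing commutes). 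So success is a nontrivial fact to be witnessed, not a bookkeeping exercise. Moreover, your hope that each stage admits only one legal rewrite (as in the arity-seven computations of Proposition \ref{atleast8proposition}) fails here: already in the initial monomial the interchange law applies both at the root and at the subtree $((d \wedgehor e)\wedgever i)\wedgehor(((f \wedgehor g)\wedgehor h)\wedgever j)$, so the derivation involves genuine choices and must be exhibited, not merely asserted to exist.

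Two further corrections. First, \eqref{oneofeach} has \emph{three} interior blocks, $e$, $f$ and $g$ (the block $e$ occupies $(0.75,1.5)\times(1.5,2.25)$ in the scaled picture and meets no side of the square), so Lemma \ref{interiorlemma} does not single out $(f\,g)$ as the only candidate transposition; it only rules out transpositions involving a border block. Second, your proposed shortcut --- merging adjacent blocks such as $d,e$ or $i,j$ to land on a commutative arity-nine partition of \cite{BM2016} and then concluding by partial composition --- does not work for this configuration: the paper classifies this relation among the sixteen \emph{new} ones precisely because it is not obtained that way (it is the other subcase, with two vertical cuts in the third slice, that reduces to identity 3992 of \cite{BM2016}). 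Until you write down and check a complete derivation, the theorem remains unproved.
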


\begin{proof}
We list applications of interchange; the other details are self-explanatory:
\begin{align*}
&
(a \wedgehor (b \wedgever c))\wedgever ((d \wedgehor e) \wedgever i) \wedgehor ((f \wedgehor g \wedgehor h) \wedgever j)
\\
&\equiv
(a \wedgehor (b \wedgever c))\wedgever ((d \wedgehor e \wedgehor f \wedgehor g \wedgehor h)   \wedgever ( i \wedgehor j)
\\
&\equiv
(a \wedgehor (b \wedgever c))\wedgever ((d \wedgehor e \wedgehor f) \wedgever i) \wedgehor ((g \wedgehor h) \wedgever j)
\\
&\equiv
(a \wedgever (d \wedgehor e \wedgehor f) \wedgever i) ) \wedgehor ((b \wedgever c) \wedgever ((g \wedgehor h) \wedgever j)
\\
&\equiv
((a \wedgever (d \wedgehor e \wedgehor f)) \wedgehor (b \wedgever c) ) ) \wedgever (i \wedgehor ((g \wedgehor h) \wedgever j)
\\
&\equiv
((a \wedgehor b ) \wedgever ( (d \wedgehor e \wedgehor f \wedgehor c) ) \wedgever (i \wedgehor ((g \wedgehor h) \wedgever j)
\\
&\equiv
((a \wedgever  (d \wedgehor e)) \wedgehor (b \wedgever (f \wedgehor c) ) \wedgever (i \wedgehor ((g \wedgehor h) \wedgever j)
\\
&\equiv
((a \wedgever  (d \wedgehor e)) \wedgever i ) \wedgehor  ( (b \wedgever (f \wedgehor c)  \wedgever ((g \wedgehor h) \wedgever j)
\\
&\equiv
(a \wedgehor  (b \wedgever (f \wedgehor c))  \wedgever ((d \wedgehor e) \wedgever i) \wedgehor ((g \wedgehor h) \wedgever j))
\\
&\equiv
(a \wedgehor  (b \wedgever (f \wedgehor c))  \wedgever (d \wedgehor e \wedgehor g \wedgehor h) \wedgever (i \wedgehor j))
\\
&\equiv
((a \wedgever  (d \wedgehor e \wedgehor g ))  \wedgehor (b \wedgever (f \wedgehor c) \wedgever h)) \wedgever (i \wedgehor j)
\\
&\equiv
(a \wedgever  (d \wedgehor e \wedgehor g ) \wedgever i) \wedgehor (b \wedgever (f \wedgehor c) \wedgever h \wedgever j)
\\
&\equiv
(a \wedgehor  b) \wedgever ((d \wedgehor e \wedgehor g ) \wedgever i) \wedgehor ((f \wedgehor c) \wedgever h \wedgever j))
\\
&\equiv
(a \wedgehor  b) \wedgever (d \wedgehor e \wedgehor g  \wedgehor  f \wedgehor c) \wedgever ( i \wedgehor (h \wedgever j))
\\
&\equiv
((a \wedgever (d \wedgehor e \wedgehor g  \wedgehor  f)  \wedgehor (b \wedgever c)) \wedgever ( i \wedgehor (h \wedgever j))
\\
&\equiv
(a \wedgever (d \wedgehor e \wedgehor g  \wedgehor  f ) \wedgever  i ) \wedgehor ((b \wedgever c) \wedgever (h \wedgever j))
\\
&\equiv
(a \wedgehor (b \wedgever c))  \wedgever ((( d \wedgehor e \wedgehor g  \wedgehor  f)\wedgever i) \wedgehor (h \wedgever j))
\\
&\equiv
(a \wedgehor (b \wedgever c))  \wedgever (( d \wedgehor e \wedgehor g  \wedgehor  f\wedgehor h) \wedgever (i \wedgehor j))
\\
&\equiv
(a \wedgehor (b \wedgever c))  \wedgever (( d \wedgehor e) \wedgever i) \wedgehor ((g  \wedgehor  f\wedgehor h) \wedgever j)
\end{align*}
The proof is complete.
\end{proof}

In the subcase with two vertical cuts in the third slice, the corresponding diagram is the same as \eqref{oneofeach}
except that the lower right block containing $b$ and $c$ is rotated 90 degrees clockwise.
We obtain a commutativity relation for this block partition, but this relation is easily seen to be an consequence
of identity 3992 from \cite{BM2016}.


\section{Concluding remarks}

In this final section we briefly mention possible directions for future research.

\subsubsection*{Mixed structures}

We have studied two binary operations, both associative or both nonassociative,
related by the interchange law.
More generally, for $p, q \ge 0$ let $I = \{ 1, \dots, p{+}q \}$;
choose subsets $J \subseteq I$ and
$K \subseteq \{ \, \{k,\ell\} \mid k, \ell \in I, \, k \ne \ell \, \}$.
Let $S$ be a set with $p{+}q$ binary operations,
$p$ associative $\star_1$, \dots, $\star_p$ and $q$ nonassociative $\star_{p+1}$, \dots, $\star_{p+q}$,
satisfying interchange between $\star_j$ and itself for $j \in J$,
and between $\star_k$, $\star_\ell$ for $\{k,\ell\} \in K$.
The operads we have studied in this paper correspond to
$(p,q) = (0,2)$ or $(2,0)$ with $J = \emptyset$ and $K = \{\{1,2\}\}$.

\subsubsection*{Higher arity interchange laws}

We have studied only binary operations.
More generally, let $S$ be a nonempty set,
$M_p(S)$ the set of all $p$-ary operations $f\colon S^p \to S$, and
$X = ( x_{ij} )$ a $p \times q$ array with entries in $S$.
If $f \in M_p(S)$, $g \in M_q(S)$ then we may apply $f$, $g$ to $X$
either by applying $g$ to each row vector, obtaining an $m \times 1$ column vector, and applying $f$;
or the reverse.
If the results are equal then $f$, $g$ satisfy the $m \times n$ interchange law
(we also say that $f$, $g$ commute):
\[
\begin{array}{l}
f( g( x_{11}, \dots, x_{1n} ), \dots, g( x_{m1}, \dots, x_{mn} ) ) \equiv
\\
g( f( x_{11}, \dots, x_{m1} ), \dots, f( x_{1n}, \dots, x_{mn} ) ).
\end{array}
\]
Since $f$ acts on columns and $g$ on rows, we may write $f( X g ) \equiv ( f X ) g$,
showing that interchange may be regarded as a form of associativity.

\subsubsection*{Higher dimensions}

We have studied structures with two operations, corresponding to the horizontal
and vertical directions in two dimensions.
Most of our constructions make sense for any number of dimensions $d \ge 2$.
One obstacle for $d \ge 3$ is that the monomial basis for $\mathbf{Assoc}$ consisting of
nonbinary trees with alternating white and black internal nodes ($\mathbf{AssocNB}$)
does not generalize in a straightforward way.

\subsubsection*{Associativity for two operations}

With more than one operation, there are various forms of associativity;
we have only considered the simplest: each operation is individually associative.
The operations may also associate with each other in various ways:
black-white associativity, $( a \wedgehor b ) \wedgever c \equiv a \wedgehor ( b \wedgever c )$;
total associativity (black-white and white-black); compatibility
(every linear combination of the operations is associative);
diassociativity (black-white and the two bar identities).

\subsubsection*{Variations on the interchange law}

In universal algebra, the interchange law is called the \emph{medial} identity;
it has a close relative, the \emph{paramedial} identity, in which the outer arguments transpose:
$( a \wedgehor b ) \wedgever ( c \wedgehor d ) = ( d \wedgever b ) \wedgehor ( d \wedgever a )$.
In general, one considers $d$ operations of arity $n$, and relations $m_1 \equiv m_2$
where $\{ m_1, m_2 \}$ is an unordered pair of monomials of arity $N = 1+w(n{-}1)$
in which $m_1$ has the identity permutation of $N$ distinct variables and $m_2$ has some
nonidentity permutation.
Of greatest interest are those relations which have the greatest symmetry:
that is, the corresponding unordered pair generates an orbit of minimal size
under the action of the wreath product $S_d \ltimes (S_n)^d$ of the symmetric group $S_d$
permuting the operations with the group $(S_n)^d$ permuting the arguments of the operations.

\subsubsection*{$N$-ary suboperads of binary operads}

To conclude, we mention a different point of view on commutativity for double interchange semigroups.
In general, let $\mathbf{O}$ be a symmetric operad generated by binary operations satisfying
relations of arity $\ge 3$.
An algebra over $\mathbf{O}$ is called an $\mathbf{O}$-\emph{algebra};
the most familiar cases are associative, alternative, pre-Lie, diassociative, dendriform, etc.
We propose the following definition of $N$-\emph{tuple} $\mathbf{O}$-\emph{system} for all $N \ge 3$:
an algebra over the suboperad $\mathbf{O}^{(N)} \subset \mathbf{O}$ generated by
the $S_N$-module $\mathbf{O}(N)$ of all $N$-ary operations in $\mathbf{O}$.
In particular, consider the operad $\mathbf{DIA}$ generated by two associative operations
satisfying the interchange law.
Previous results \cite{BM2016} show that $\mathbf{DIA}(N)$ is a direct sum of copies of the
regular $S_N$-module if and only if $N \le 8$.
The generators of $\mathbf{DIA}$ have no symmetry,
but the generators of $\mathbf{DIA}^{(N)}$ have symmetry for $N \ge 9$.





\begin{thebibliography}{99}

\bibitem{ABBMP2013}
\textsc{A. Asinowski, G. Barequet, M. Bousquet-M\'elou, T. Mansour, R. Pinter}:
Orders induced by segments in floorplans and (2-14-3, 3-41-2)-avoiding permutations.
\emph{Electronic Journal of Combinatorics}
20 (2013), no.~2, Paper 35, 43 pages.

\bibitem{Benabou1967}
\textsc{J. B\'enabou}:
Introduction to bicategories.
\emph{Reports of the Midwest Category Seminar}, pages 1--77.
Lecture Notes in Mathematics, 47.
Springer-Verlag, Berlin-Heidelberg, 1967.

\bibitem{BD2016}
\textsc{M. Bremner, V. Dotsenko}:
\emph{Algebraic Operads: An Algorithmic Companion}.
Chapman \& Hall / CRC, 2016.

\bibitem{BD2017}
\textsc{M. Bremner, V. Dotsenko}:
Boardman-Vogt tensor products of absolutely free operads.
\url{arXiv:1705.04573 [math.KT]}
(submitted 12 May 2017).

\bibitem{BM2016}
\textsc{M. Bremner, S. Madariaga}:
Permutation of elements in double semigroups.
\emph{Semigroup Forum}
92 (2016) 335--360.

\bibitem{CFP1996}
\textsc{J. Cannon, W. Floyd, W. Parry}:
Introductory notes on Richard Thompson's groups.
\emph{L'Enseignement Math\'ematique}
(2) 42 (1996), no.~3-4, 215--256.

\bibitem{DP1993}
\textsc{R. Dawson, R. Par\'e}:
General associativity and general composition for double categories.
\emph{Cahiers de Topologie et G\'eom\'etrie Diff\'erentielle Cat\'egoriques}
34 (1993), no.~1, 57--79.

\bibitem{DeWolf2013}
\textsc{D. DeWolf}:
\emph{On Double Inverse Semigroups}.
M.Sc.~Thesis.
Dalhousie University, Halifax, Canada, August 2013.
\verb|dalspace.library.dal.ca:8080/bitstream/handle/10222/35448/|
\verb|DeWolf-Darien-MSc-MATH-October-2013.pdf?sequence=3&isAllowed=y|

\bibitem{EFM2014}
\textsc{K. Ebrahimi-Fard, D. Manchon}:
The Magnus expansion, trees and Knuth's rotation correspondence.
\emph{Foundations of Computational Mathematics}
14 (2014), no.~1, 1--25.

\bibitem{EH1962}
\textsc{B. Eckmann, P. J. Hilton}:
Group-like structures in general categories, I: Multiplications and comultiplications.
\emph{Mathematische Annalen}
145 (1961/1962) 227--255.

\bibitem{Ehresmann1963}
\textsc{C. Ehresmann}:
Cat\'egories structur\'ees.
\emph{Annales Scientifiques de l'\'Ecole Normale Sup\'erieure, Troisi\`eme S\'erie}
80 (1963) 349--426.

\bibitem{Giraudo2016}
\textsc{S. Giraudo}:
Pluriassociative algebras I: The pluriassociative operad.
\emph{Advances in Applied Mathematics}
77 (2016) 1--42.

\bibitem{GLM2008}
\textsc{N. Gu, N. Li, T. Mansour}:
2-binary trees: bijections and related issues.
\emph{Discrete Mathematics}
308 (2008), no.~7, 1209--1221.

\bibitem{Johnson1989}
\textsc{M. Johnson}:
The combinatorics of $n$-categorical pasting.
\emph{Journal of Pure and Applied Algebra}
62 (1989), no.~3, 211--225.

\bibitem{KLMH2011}
\textsc{A. Kahng, J. Lienig, I. Markov, J. Hu}:
\emph{VLSI Physical Design: From Graph Partitioning to Timing Closure}.
Springer Netherlands, 2011.

\bibitem{KS1974}
\textsc{G. Kelly, R. Street}:
Review of the elements of 2-categories.
\emph{Category Seminar (Proceedings of the Sydney Category Theory Seminar, 1972/1973)}, pages 75--103.
Lecture Notes in Mathematics, 420.
Springer, Berlin, 1974.

\bibitem{Kock2007}
\textsc{J. Kock}:
Note on commutativity in double semigroups and two-fold monoidal categories.
\emph{Journal of Homotopy and Related Structures}
2 (2007) no.~2, 217--228.

\bibitem{Kromer2007}
\textsc{R. Kr\"omer}:
\emph{Tool and Object: A History and Philosophy of Category Theory}.
Science Networks, Historical Studies, 32.
Birkh\"auser Verlag, Basel, 2007.

\bibitem{LR2006}
\textsc{J.-L. Loday, M. Ronco}:
On the structure of cofree Hopf algebras.
\emph{Journal f\"ur die reine und angewandte Mathematik}
592 (2006) 123--155.

\bibitem{LV2012}
\textsc{J.-L. Loday, B. Vallette}:
\emph{Algebraic Operads}.
Grundlehren der mathematischen Wissenschaften, 346.
Springer, Heidelberg, 2012.

\bibitem{MacLane1965}
\textsc{S. Mac Lane}:
Categorical algebra.
\emph{Bulletin of the American Mathematical Society}
71 (1965) 40--106.

\bibitem{MSS2002}
\textsc{M. Markl, S. Shnider, J. Stasheff}:
\emph{Operads in Algebra, Topology and Physics}.
Mathematical Surveys and Monographs, 96.
American Mathematical Society, Providence, 2002.

\bibitem{MS2004}
\textsc{J. McClure, J. Smith}:
Operads and cosimplicial objects: an introduction.
\emph{Axiomatic, Enriched and Motivic Homotopy Theory},
pages 133--171.
Nato Science Series II: Mathematics, Physics and Chemistry, 131.
Kluwer Academic Publishers, Dordrecht, 2004.

\bibitem{Power1991}
\textsc{A. Power}:
An $n$-categorical pasting theorem.
\emph{Category Theory (Como, 1990)}, pages 326--358.
Lecture Notes in Mathematics, 1488.
Springer, Berlin, 1991.

\bibitem{Stell1995}
\textsc{J. Stell}:
Modelling term rewriting systems by sesqui-categories.
Technical Report TR94-02, Keele University, 1994.
\emph{Cat\'egories, Alg\`ebres, Esquisses et N\'eo-esquisses},
pages 121--126,
Caen, 1994.

\bibitem{Street1991}
\textsc{R. Street}:
Parity complexes.
\emph{Cahiers de Topologie et G\'eom\'etrie Diff\'erentielle Cat\'egoriques}
32 (1991), no.~4, 315--343.
Corrigenda: same journal
35 (1994), no.~4, 359--361.

\bibitem{Street1996}
\textsc{R. Street}:
Categorical structures.
\emph{Handbook of Algebra, Volume 1},
pages 529--577.
Elsevier / North-Holland, Amsterdam, 1996.

\end{thebibliography}
\end{document}